\DeclareMathOperator{\Arith}{Arith}
\DeclareMathOperator{\diag}{diag}
\DeclareMathOperator{\Image}{Im}
\DeclareMathOperator{\SArith}{SArith}
\DeclareMathOperator{\Sub}{Sub}
\DeclarePairedDelimiter{\abs}{\lvert}{\rvert}
\DeclarePairedDelimiter{\ceil}{\lceil}{\rceil}
\DeclarePairedDelimiter{\floor}{\lfloor}{\rfloor}
\newcommand{\overbar}[1]{\mkern 0.6mu\overline{\mkern-0.6mu#1\mkern-0.6mu}\mkern 0.6mu}
\newcommand{\vd}{\mathbf{d}}
\newcommand{\vr}{\mathbf{r}}
\newcommand{\vzero}{\mathbf{0}}
\newcommand{\rab}{\overbar{\mathbf{r}}}
\newcommand{\dd}{d}
\renewcommand{\r}{\overbar{r}}
\newcommand{\rr}{r}
\newcommand{\q}{\mathbf{q}}
\newcommand{\ZZ}{\mathbb{Z}}
\newcommand{\BB}{\mathcal{B}}
\newtheorem{proposition}{Proposition}[section]
\newtheorem{lemma}[proposition]{Lemma}
\newtheorem{corollary}[proposition]{Corollary}
\newtheorem{theorem}[proposition]{Theorem}
\newtheorem*{genericthm*}{\thistheoremname}
\newenvironment{namedthm*}[1]
  {\renewcommand{\thistheoremname}{#1}%
   \begin{genericthm*}}
  {\end{genericthm*}}
\theoremstyle{definition}
\newcommand{\thistheoremname}{}
\title{Arithmetical structures on bidents}
\author{Kassie Archer\footnote{Department of Mathematics, University of Texas at Tyler, 3900 University Blvd., Tyler, TX 75799, USA; Email address: karcher@uttyler.edu}, Abigail C. Bishop\footnote{Department of Mathematics and Physics, Iona College, 715 North Ave, New Rochelle, NY 10801, USA; Email address: abishop@iona.edu}, Alexander Diaz-Lopez\footnote{Department of Mathematics and Statistics, Villanova University, 800 Lancaster Ave (SAC 305), Villanova, PA 19085, USA; Email address: alexander.diaz-lopez@villanova.edu},\\ Luis D. Garc\'ia Puente\footnote{Department of Mathematics and Statistics, Sam Houston State University, Huntsville, TX 77341, USA; Email address: lgarcia@shsu.edu}, Darren Glass\footnote{Department of Mathematics, Gettysburg College, 300 N. Washington St, Gettysburg, PA 17325, USA; Email address: dglass@gettysburg.edu}, Joel Louwsma\footnote{Department of Mathematics, Niagara University, Niagara University, NY 14109, USA; Email address: jlouwsma@niagara.edu}}
\date{}
\begin{document}
\maketitle

\begin{abstract}
An arithmetical structure on a finite, connected graph $G$ is a pair of vectors $(\mathbf{d}, \mathbf{r})$ with positive integer entries for which $(\operatorname{diag}(\mathbf{d}) - A)\mathbf{r} = \mathbf{0}$, where $A$ is the adjacency matrix of $G$ and where the entries of $\mathbf{r}$ have no common factor. The critical group of an arithmetical structure is the torsion part of the cokernel of $(\operatorname{diag}(\mathbf{d}) - A)$. In this paper, we study arithmetical structures and their critical groups on bidents, which are graphs consisting of a path with two ``prongs'' at one end. We give a process for determining the number of arithmetical structures on the bident with $n$ vertices and show that this number grows at the same rate as the Catalan numbers as $n$ increases. We also completely characterize the groups that occur as critical groups of arithmetical structures on bidents.
\end{abstract}

\section{Introduction}

Arithmetical structures on graphs generalize the notion of the Laplacian matrix of a graph; similarly, the associated critical groups generalize the sandpile group of a graph. Arithmetical structures and their critical groups were introduced by Lorenzini~\cite{L89} as intersection matrices and the associated group of components that arise when studying degenerating curves in algebraic geometry. In this paper, we analyze the combinatorics of arithmetical structures and their critical groups on bidents, which we define to be the graphs illustrated in Figure~\ref{fig:bident}.

\begin{figure}
\begin{center}
\begin{tikzpicture}
\node (0) at (-1.5,1) [circle,draw=black,fill=black, label=below:{$v_x$},inner sep=0pt, minimum size=.18cm]{};
\node (1) at (-1.5,-1) [circle,draw=black,fill=black, label=below:{$v_y$},inner sep=0pt, minimum size=.18cm]{};
\node (a) at (0,0) [circle,draw=black,fill=black, label=below:{$v_0$},inner sep=0pt, minimum size=.18cm]{};
\node (b) at (1.75,0) [circle,draw=black,fill=black, label=below:{$v_1$},inner sep=0pt, minimum size=.18cm]{};
\node (c) at (3.5,0) {$\boldsymbol{\cdots}$};
\node (d) at (5.25,0) [circle,draw=black,fill=black, label=below:{$v_{\ell}$},inner sep=0pt, minimum size=.18cm]{};
\draw (0)--(a);
\draw (1)--(a);
\draw (a) -- (b);
\draw (b) -- (c);
\draw (c) -- (d);
\end{tikzpicture}
\end{center}
\caption{The bident \texorpdfstring{$D_n$}{D\_n} on $n=\ell+3$ vertices. The labeling shown here will be used throughout the paper.\label{fig:bident}}
\end{figure}
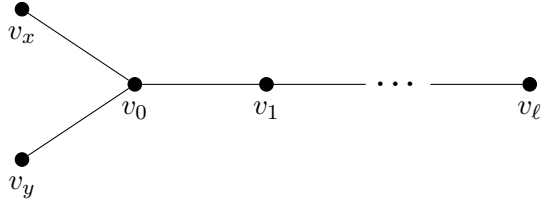

Let $G$ be a finite, connected graph with $n$ vertices, and let $A$ be the adjacency matrix of $G$.
An \emph{arithmetical structure} on $G$ is given by a pair of vectors $(\vd, \vr) \in \ZZ_{> 0}^n\times \ZZ_{> 0}^n$ for which the matrix $L(G,\vd)\coloneqq (\diag(\vd)-A)$ satisfies the equation
\[
L(G, \vd)\vr = \vzero,
\]
and where  the entries of $\vr$ have no nontrivial common factor. The \emph{Laplacian matrix} of $G$, defined to be $L(G)\coloneqq(\diag(\vd')-A)$, where $\vd'$ is the vector with entries given by the degrees of the vertices of $G$, gives an arithmetical structure on $G$. Indeed, taking $\vr'$ to be the vector all of whose entries are $1$, we have that $L(G, \vd')\vr' = \vzero$.

It was shown in~\cite[Proposition 1.1]{L89} that for any arithmetical structure $(\vd,\vr)$ on $G$ the matrix $L(G,\vd)$ has rank $n-1$, implying that the choice of $\vd$ uniquely determines $\vr$ and vice versa.  Moreover, it follows that the associated linear transformation $L(G,\vd)\colon\ZZ^n\to\ZZ^n$ has cokernel $\ZZ^n/\Image(L(G,\vd))$ of the form $\ZZ\oplus K(G;\vd,\vr)$ for some finite abelian group $K(G;\vd,\vr)$. This group $K(G;\vd,\vr)$ is called the \emph{critical group} of the arithmetical structure $(\vd,\vr)$. In terms of arithmetic geometry, this group is isomorphic to the group of components of a N\'eron model associated to the Jacobian of a curve~\cite{L89}.  In the special case of the Laplacian, the critical group is also known as the \emph{sandpile group}, an object that has become a crossroads of a wide range of mathematics, physics, and computer science. For more information, see~\cite{CP}, among others.

Lorenzini~\cite[Lemma 1.6]{L89} also shows that any finite, connected graph has a finite number of arithmetical structures; however, his proof does not give a bound on the number of such structures.  Recent work in \cite{AB,B17,C18,C17} involves studying arithmetical structures and their critical groups on various families of connected graphs. In~\cite{B17}, the authors show that the number of arithmetical structures on the path graph $P_n$ is given by the Catalan number $C(n-1)$ and that the number of arithmetical structures on the cycle graph $C_n$ is given by the binomial coefficient $\binom{2n-1}{n-1}$. For the star $K_{n,1}$, the number of arithmetical structures was shown in~\cite{C18} to be given by the number of positive integer solutions to the Diophantine equation
\[
d_0=\sum_{i=1}^n \frac{1}{d_i}.
\]
These solutions are so-called Egyptian fraction representations of $d_0$ (see~\cite[A280517]{OEIS}). If we impose the condition $d_0=1$, the number of positive integer solutions to the resulting equation is the number of arithmetical structures on the complete graph $K_{n}$.

The results of~\cite{B17} completely classify the arithmetical structures on graphs where all vertices have degree $1$ or $2$, so in this article we consider \emph{bidents}, pictured in Figure~\ref{fig:bident}, as they are the simplest family of graphs that have a vertex of degree $3$. We denote these graphs by $D_n$ since they are isomorphic as graphs to Dynkin diagrams of type $D$, and we define them for all $n\geq3$. We use the indices of the labels in Figure~\ref{fig:bident} to index the vectors $\vd$ and $\vr$ for any given arithmetical structure $(\vd, \vr)$ on $D_n$. In particular, we write $\vd=(d_x, d_y, d_0, d_1, \dotsc, d_\ell)$ and $\vr=(r_x, r_y, r_0, r_1, \dotsc, r_\ell)$ and label graphs with their respective $\vd$- and $\vr$-labelings, as shown in  Figure~\ref{fig:bidentlabels}.

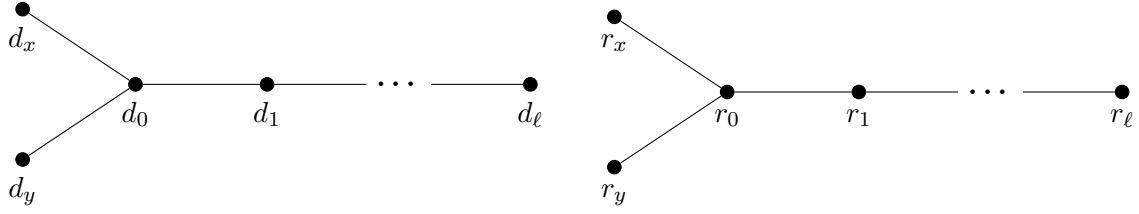
\begin{figure}
\begin{center}
\begin{tikzpicture}
\node (0) at (-1.5,1) [circle,draw=black,fill=black, label=below:{$d_x$},inner sep=0pt, minimum size=.18cm]{};
\node (1) at (-1.5,-1) [circle,draw=black,fill=black, label=below:{$d_y$},inner sep=0pt, minimum size=.18cm]{};
\node (a) at (0,0) [circle,draw=black,fill=black, label=below:{$d_0$},inner sep=0pt, minimum size=.18cm]{};
\node (b) at (1.75,0) [circle,draw=black,fill=black, label=below:{$d_1$},inner sep=0pt, minimum size=.18cm]{};
\node (c) at (3.5,0) {$\boldsymbol{\cdots}$};
\node (d) at (5.25,0) [circle,draw=black,fill=black, label=below:{$d_{\ell}$},inner sep=0pt, minimum size=.18cm]{};
\draw (0)--(a);
\draw (1)--(a);
\draw (a) -- (b);
\draw (b) -- (c);
\draw (c) -- (d);
\end{tikzpicture}
\hspace{.1in}
\begin{tikzpicture}
\node (0) at (-1.5,1) [circle,draw=black,fill=black, label=below:{$r_x$},inner sep=0pt, minimum size=.18cm]{};
\node (1) at (-1.5,-1) [circle,draw=black,fill=black, label=below:{$r_y$},inner sep=0pt, minimum size=.18cm]{};
\node (a) at (0,0) [circle,draw=black,fill=black, label=below:{$r_0$},inner sep=0pt, minimum size=.18cm]{};
\node (b) at (1.75,0) [circle,draw=black,fill=black, label=below:{$r_1$},inner sep=0pt, minimum size=.18cm]{};
\node (c) at (3.5,0) {$\boldsymbol{\cdots}$};
\node (d) at (5.25,0) [circle,draw=black,fill=black, label=below:{$r_{\ell}$},inner sep=0pt, minimum size=.18cm]{};
\draw (0)--(a);
\draw (1)--(a);
\draw (a) -- (b);
\draw (b) -- (c);
\draw (c) -- (d);
\end{tikzpicture}
\end{center}
\caption{On the left we show the $\vd$-labeling for the graph $D_n$, and on the right we show the corresponding $\vr$-labeling.\label{fig:bidentlabels}}
\end{figure}

In order to determine the number of arithmetical structures on $D_n$, we first define in Section~\ref{sec:smooth} a notion of ``smooth'' arithmetical structure. Every arithmetical structure on $D_n$ with $d_x,d_y\geq2$ is associated to a unique smooth arithmetical structure on $D_n$ or a smaller bident (Lemma~\ref{lem:uniquesmoothancestor}). We use this to obtain the following theorem, which reduces the problem of enumerating arithmetical structures on bidents to that of enumerating smooth arithmetical structures on bidents.

\begin{namedthm*}{Theorem~\ref{thm:smoothtooverall}}
Let $n\geq 3$. The number of arithmetical structures on $D_n$ is given by
\[
\abs{\Arith(D_n)}=2C(n-2)+\sum_{m=4}^nB(n-3,n-m)\abs{\SArith(D_m)},
\]
where $C(n)$ is the $n$-th Catalan number, $B(n,k)=\frac{n-k+1}{n+1}\binom{n+k}{n}$ is a ballot number, and $\abs{\SArith(D_m)}$ is the number of smooth arithmetical structures on $D_m$.
\end{namedthm*}

In Section~\ref{sec:reduction}, we give a process for determining the number of smooth arithmetical structures, and hence the number of arithmetical structures, on $D_n$. We show that two parameters determine a smooth arithmetical structure on some bident, then find an expression for the number of vertices of this bident in terms of a function that measures the number of steps in a variant of the Euclidean algorithm. Analyzing this process in Section~\ref{sec:bounds}, we see that the number of smooth arithmetical structures on $D_n$ grows at the same rate as $n^3$ as $n$ increases (Theorem~\ref{thm:Xnbounds}). Together with Theorem~\ref{thm:smoothtooverall}, this yields the following theorem, which implies that the total number of arithmetical structures on $D_n$ grows at the same rate as the Catalan numbers as $n$ increases.

\begin{namedthm*}{Theorem~\ref{Thm:overallcount}}
For $n\geq 4$, we have that
\[
2 C(n-2)+ C(n-3) \leq \abs{\Arith(D_n)} < 2 C(n-2)+ 702  C(n-3).
\]
\end{namedthm*}

Finally, in Section~\ref{sec:critical}, we study critical groups of arithmetical structures on bidents. We show that these critical groups are always cyclic and obtain results about their orders. The maximal order of a critical group of an arithmetical structure on $D_n$ is $2n-5$ (Theorem~\ref{thm:criticalgp}), but there are values less than $2n-5$ that do not occur as orders of critical groups of arithmetical structures on $D_n$. Our main result about critical groups (Theorem~\ref{thm:critgporder}) determines, for each order $m$, the values of $n$ for which there is an arithmetical structure on $D_n$ with critical group of order $m$. This result completely characterizes the groups that occur as critical groups of arithmetical structures on $D_n$.

There are several open questions that remain related to this project, including finding a closed formula for the number of arithmetical structures on a bident. In addition, several of the techniques  in this paper should generalize to other families of graphs and be useful when studying arithmetical structures and their critical groups on graphs such as ``Y-graphs'' (graphs consisting of three paths that intersect at a common endpoint vertex) and ``I-graphs'' (graphs isomorphic to affine Dynkin diagrams $\widetilde{D}_n$).

\section{Smooth arithmetical structures}\label{sec:smooth}

In this section, we show how to count arithmetical structures on $D_n$ in terms of the number of ``smooth'' arithmetical structures on bidents. We focus primarily on arithmetical structures on $D_n$ with $d_x,d_y\geq2$, using the notation of Figure~\ref{fig:bidentlabels}, and show that all such structures can be obtained from a smooth arithmetical structure on some bident by a process of subdivision. As we will make precise in the proof of Theorem~\ref{thm:smoothtooverall}, enumerating arithmetical structures on $D_n$ with $d_x=1$ or $d_y=1$ reduces to enumerating arithmetical structures on path graphs, which has been done in~\cite{B17}.

\subsection{Definition and basic properties}

For $n\geq 3$, we say that an arithmetical structure $(\vd,\vr)$ on $D_n$ is \emph{smooth} if $d_x,d_y,d_1,d_2,\dotsc,d_{\ell}\geq2$; we denote by ${\SArith(D_n)}$ the set of smooth arithmetical structures on $D_n$. Note that this definition imposes no restriction on $d_0$; as we will see in Proposition~\ref{prop:centraldis1}, in fact smooth arithmetical structures on $D_n$ must have $d_0=1$. In Lemma~\ref{lem:smoothcharacterization}, we show that this definition is equivalent to the $r$-values strictly decreasing when moving away from the central vertex $v_0$.

As an example, consider the arithmetical structures on $D_4$, of which there are $14$, with $\vd$-vectors as follows:
\begin{center}
\begin{tabular}{lllll}
$\vd_1 = (1,1,3,1)$, & $\vd_2 = (3,3,1,3)$, &$\vd_3 = (6,3,1,2)$,&$\vd_4 = (3,6,1,2)$, &$\vd_5 = (6,2,1,3)$, \\
$\vd_6 = (2,6,1,3)$, &$\vd_7 = (3,2,1,6)$,&$\vd_8 = (2,3,1,6)$, &$\vd_9 = (2,1,2,2)$, &$\vd_{10} = (1,2,2,2)$, \\
$\vd_{11} = (2,2,2,1)$,&$\vd_{12} = (4,2,1,4)$,&$\vd_{13} = (2,4,1,4)$, &$\vd_{14} = (4,4,1,2)$,
\end{tabular}
\end{center}
and with corresponding $\vr$-vectors as follows:
\begin{center}
\begin{tabular}{lllll}
$\vr_1 = (1,1,1,1)$, & $\vr_2 = (1,1,3,1)$, &$\vr_3 = (1,2,6,3)$,&$\vr_4 = (2,1,6,3)$, &$\vr_5 = (1,3,6,2)$, \\
$\vr_6 = (3,1,6,2)$, &$\vr_7 = (2,3,6,1)$,&$\vr_8 = (3,2,6,1)$, &$\vr_9 = (1,2,2,1)$, &$\vr_{10} = (2,1,2,1)$, \\
$\vr_{11} = (1,1,2,2)$,&$\vr_{12} = (1,2,4,1)$,&$\vr_{13} = (2,1,4,1)$, &$\vr_{14} = (1,1,4,2)$.
\end{tabular}
\end{center}
Ten of these arithmetical structures on $D_4$ are smooth. The arithmetical structures $(\vd_1,\vr_1)$, $(\vd_9,\vr_9)$, $(\vd_{10},\vr_{10})$, and $(\vd_{11},\vr_{11})$ are not smooth  since their $\vd$-vectors have at least one of $d_x$, $d_y$, or $d_1$ equal to $1$.

\begin{lemma}\label{lem:smooth}
Let $n\geq 4$, and let $(\vd,\vr)$ be an arithmetical structure on $D_n$. The following conditions are equivalent:
\begin{enumerate}[ $(a)$]
\item \label{cond:no1s} $\dd_i\geq2$ for all $i\in\{1,2,\dotsc,\ell\}$;
\item \label{cond:shrinkinggaps} $\rr_0-\rr_1\geq\rr_1-\rr_2\geq\dotsb\geq\rr_{\ell-1}-\rr_\ell>0$;
\item \label{cond:decreasingrs}  $\rr_0>\rr_1>\dotsb>\rr_\ell$.
\end{enumerate}
\end{lemma}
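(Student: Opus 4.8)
The plan is to establish the cycle of implications $(a)\Rightarrow(b)\Rightarrow(c)\Rightarrow(a)$, with all the content coming from reading the equation $L(D_n,\vd)\vr=\vzero$ row by row along the path. The leaf $v_\ell$ contributes $d_\ell r_\ell = r_{\ell-1}$, while each interior path vertex $v_i$ with $1\le i\le \ell-1$ contributes $d_i r_i = r_{i-1}+r_{i+1}$. Writing $\delta_i \coloneqq r_{i-1}-r_i$ for the consecutive gaps, these two families of equations repackage into a single boundary identity $\delta_\ell = (d_\ell-1)r_\ell$ and the second-difference identity
\[
\delta_i-\delta_{i+1} = r_{i-1}-2r_i+r_{i+1} = (r_{i-1}+r_{i+1})-2r_i = (d_i-2)r_i \qquad (1\le i\le \ell-1).
\]
Since every $r_j$ is a positive integer, these are the only facts I expect to need.

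For $(a)\Rightarrow(b)$, assume $d_i\ge 2$ for all $i$. The boundary identity gives $\delta_\ell=(d_\ell-1)r_\ell>0$, and the second-difference identity gives $\delta_i-\delta_{i+1}=(d_i-2)r_i\ge 0$, i.e.\ $\delta_i\ge\delta_{i+1}$, for each interior $i$; chaining these yields $\delta_1\ge\delta_2\ge\dots\ge\delta_\ell>0$, which is exactly $(b)$. The implication $(b)\Rightarrow(c)$ is immediate, since $(b)$ forces $\delta_i\ge\delta_\ell>0$ for every $i$, hence $r_{i-1}>r_i$ throughout. For $(c)\Rightarrow(a)$ I would argue directly from positivity rather than through the gap identities: assuming $r_0>r_1>\dots>r_\ell$, the leaf equation gives $d_\ell r_\ell = r_{\ell-1}>r_\ell$, forcing $d_\ell\ge 2$, and for each interior $i$ the equation $d_i r_i = r_{i-1}+r_{i+1}$ together with $r_{i-1}>r_i$ and $r_{i+1}\ge 1$ gives $d_i r_i > r_i$, hence $d_i\ge 2$. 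This closes the cycle.

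The computations are elementary, so I do not expect a single hard step; the crux is recognizing that the second-difference identity $\delta_i-\delta_{i+1}=(d_i-2)r_i$ is the right bookkeeping device, as it converts the hypothesis $d_i\ge 2$ directly into the monotonicity of the gaps in $(b)$. The only points needing care are the boundary at $v_\ell$, which is governed by a $d_\ell-1$ rather than a $d_\ell-2$ and so must be handled separately by the leaf equation, and the degenerate case $\ell=1$, where $v_1$ is itself the leaf, the interior range is empty, and conditions $(b)$ and $(c)$ each collapse to the single inequality $r_0>r_1$; the argument above covers this case uniformly.
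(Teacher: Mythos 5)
Your proposal is correct and follows essentially the same route as the paper: the second-difference identity $\delta_i-\delta_{i+1}=(d_i-2)r_i$ is just a repackaging of the paper's rearrangement $(d_i-1)r_i-r_{i+1}=r_{i-1}-r_i$, and your $(c)\Rightarrow(a)$ argument is identical to theirs. The only (welcome) addition is your explicit note on the degenerate case $\ell=1$, which the paper handles implicitly.
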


\begin{proof}
We first show that $(a)$ implies $(b)$. Note that for $i\in\{1,2,\dotsc,\ell-1\}$, we have that $\dd_i\rr_i=\rr_{i-1}+\rr_{i+1}$. Therefore $\dd_i\rr_i-\rr_i=\rr_{i-1}+\rr_{i+1}-\rr_i$, or equivalently $(\dd_i-1)\rr_i-\rr_{i+1}=\rr_{i-1}-\rr_i$. If $\dd_i\geq2$, this means that $\rr_{i-1}-\rr_i\geq\rr_i-\rr_{i+1}$. We thus have that
\[
\rr_0-\rr_1\geq\rr_1-\rr_2\geq\dotsb\geq\rr_{\ell-1}-\rr_\ell.
\]
Since $r_{\ell-1}=d_\ell r_\ell \geq 2r_\ell$, we also have that $r_{\ell-1}-r_{\ell}>0$.

To see that $(b)$ implies $(c)$, observe that, since $\rr_{i-1}-\rr_{i}>0$ for all $i\in\{1,2,\dotsc,\ell\}$, it follows immediately that $\rr_0>\rr_1>\dotsb>\rr_\ell$.

Finally, we show that $(c)$ implies $(a)$.  Let $i\in\{1,2,\dotsc,\ell-1\}$. Since $\rr_{i}<\rr_{i-1}$, we have that $\rr_i<\rr_{i-1}+\rr_{i+1}=\dd_i\rr_i$. As $\dd_i$ is an integer, this means $\dd_i\geq2$ for all $i\in\{1,2,\dotsc,\ell-1\}$. Also, since $r_\ell<r_{\ell-1}=d_\ell r_\ell$, we must have that $d_\ell\geq2$.
\end{proof}

The following lemma characterizes smooth arithmetical structures in terms of their $\vr$-vectors.
\begin{lemma}\label{lem:smoothcharacterization}
An arithmetical structure $(\vd, \vr)$ on $D_n$ with $n\geq 4$ is smooth exactly when $r_x<r_0$, $r_y<r_0$, and $\rr_0>\rr_1>\dotsb>\rr_\ell$.
\end{lemma}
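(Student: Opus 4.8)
The plan is to reduce the statement to Lemma~\ref{lem:smooth} by handling the two prong vertices $v_x$ and $v_y$ separately from the path. By definition, $(\vd,\vr)$ is smooth exactly when $d_x,d_y\geq 2$ and $d_i\geq 2$ for all $i\in\{1,2,\dotsc,\ell\}$. Lemma~\ref{lem:smooth} already establishes that the latter family of conditions (its condition $(a)$) is equivalent to $r_0>r_1>\dotsb>r_\ell$ (its condition $(c)$). So the only thing left to prove is that $d_x\geq 2$ and $d_y\geq 2$ are equivalent to $r_x<r_0$ and $r_y<r_0$, respectively.

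For this, I would write down the arithmetical-structure equation $L(D_n,\vd)\vr=\vzero$ at each prong vertex. Since $v_x$ has degree $1$ and is adjacent only to $v_0$, the equation at that vertex reads $d_x r_x = r_0$, and similarly $d_y r_y = r_0$ at $v_y$. Because all entries of $\vd$ and $\vr$ are positive integers, $d_x = r_0/r_x$ is a positive integer; hence $d_x\geq 2$ holds if and only if $r_0 > r_x$. The identical argument at $v_y$ gives $d_y\geq 2$ if and only if $r_0 > r_y$.

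Combining these two equivalences with the equivalence of $(a)$ and $(c)$ from Lemma~\ref{lem:smooth} then yields immediately that smoothness is equivalent to the simultaneous conditions $r_x<r_0$, $r_y<r_0$, and $r_0>r_1>\dotsb>r_\ell$. I do not anticipate any genuine obstacle here; the argument is essentially bookkeeping. The only points requiring care are that the prong vertices have degree $1$, so their defining equations involve no neighbor other than $v_0$, and that it is precisely the integrality of $d_x$ and $d_y$ that lets one upgrade the strict inequality $r_x<r_0$ (equivalently $r_y<r_0$) back to $d_x\geq 2$ (equivalently $d_y\geq 2$).
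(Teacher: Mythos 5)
Your proposal is correct and follows essentially the same route as the paper: both use the relation $d_xr_x=r_0$ (and $d_yr_y=r_0$) together with integrality of $d_x$ and $d_y$ to convert the prong conditions, and both invoke the equivalence of conditions $(a)$ and $(c)$ of Lemma~\ref{lem:smooth} for the tail. No gaps.
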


\begin{proof}
Since $d_x$ is an integer and $d_xr_x=r_0$, the condition $d_x\geq2$ is equivalent to $r_x<r_0$. Similarly, $d_y\geq2$ is equivalent to $r_y<r_0$. Lemma~\ref{lem:smooth} shows that the condition $d_i\geq2$ for all $i\in\{1,2,\dotsc,\ell\}$ is equivalent to the condition $\rr_0>\rr_1>\dotsb>\rr_\ell$.
\end{proof}

The next result shows that, while it is not a priori part of the definition, a smooth arithmetical structure on $D_n$ must have $d_0=1$.

\begin{proposition}\label{prop:centraldis1}
Every smooth arithmetical structure $(\vd, \vr)$ on $D_n$ with $n\geq3$ satisfies $d_0=1$.
\end{proposition}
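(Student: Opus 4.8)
The plan is to argue directly from the local equations that the relation $L(D_n,\vd)\vr=\vzero$ imposes at the three vertices near the center, combining the smoothness hypotheses at $v_x$ and $v_y$ with the fact that the $r$-values decrease along the path. First I would read off the rows of $L(D_n,\vd)\vr=\vzero$ corresponding to $v_x$, $v_y$, and $v_0$. Since $v_x$ and $v_y$ are each adjacent only to $v_0$, these give $d_x r_x = r_0$ and $d_y r_y = r_0$; and since $v_0$ is adjacent to $v_x$, $v_y$, and (when $\ell\geq1$) $v_1$, the central row gives $d_0 r_0 = r_x + r_y + r_1$.

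Next, because the structure is smooth we have $d_x,d_y\geq2$, so the first two equations yield $r_x = r_0/d_x \leq r_0/2$ and $r_y = r_0/d_y \leq r_0/2$, hence $r_x + r_y \leq r_0$. For the remaining term I would invoke smoothness along the path: by Lemma~\ref{lem:smoothcharacterization} (equivalently Lemma~\ref{lem:smooth}), smoothness forces $r_0 > r_1 > \dotsb > r_\ell$, so in particular $r_1 < r_0$. Combining these bounds, for $n\geq4$ we obtain
\[
d_0 r_0 = r_x + r_y + r_1 \leq r_0 + r_1 < 2r_0,
\]
so $d_0 < 2$; as $d_0$ is a positive integer, this forces $d_0 = 1$.

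The case $n=3$ (that is, $\ell=0$) I would treat separately, since then the term $r_1$ is absent: here $v_0$ is adjacent only to $v_x$ and $v_y$, so $d_0 r_0 = r_x + r_y \leq r_0$, which again forces $d_0 = 1$. There is no substantial obstacle in this argument; the only points requiring care are correctly extracting $r_1 < r_0$ from the smoothness hypothesis (via the equivalence established in Lemma~\ref{lem:smooth}) and separately handling the degenerate $n=3$ case in which $v_0$ has degree $2$ rather than $3$.
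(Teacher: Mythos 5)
Your proposal is correct and follows essentially the same route as the paper's proof: it uses $d_x,d_y\geq2$ to get $r_x+r_y\leq r_0$, invokes Lemma~\ref{lem:smoothcharacterization} for $r_1<r_0$, and treats $n=3$ separately. The only cosmetic difference is that you conclude directly from $d_0r_0<2r_0$ whereas the paper phrases the same estimate as a contradiction with $d_0\geq2$.
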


\begin{proof}
Let $(\mathbf{d},\mathbf{r})$ be a smooth arithmetical structure on $D_n$. Since $d_x,d_y\geq2$, we must have $r_x\leq\frac{r_0}{2}$ and $r_y\leq\frac{r_0}{2}$. If $n=3$, we have $d_0r_0=r_x+r_y\leq r_0$, so the only possibility is $d_0=1$. When $n\geq4$, we have that $d_0r_0=r_x+r_y+r_1$, so therefore $r_0=\frac{1}{d_0}(r_x+r_y+r_1)$. If $d_0\geq2$, we would have $r_0\leq\frac{1}{2}(r_x+r_y+r_1)\leq\frac{r_0}{2}+\frac{r_1}{2}$. This would imply that $\frac{r_0}{2}\leq \frac{r_1}{2}$, but Lemma~\ref{lem:smoothcharacterization} tells us that $r_0>r_1$. Therefore we must have $d_0=1$.
\end{proof}

We use Proposition~\ref{prop:centraldis1} to show that appropriate values of $r_x$, $r_y$, and $r_0$ uniquely determine a smooth arithmetical structure on some bident.

\begin{proposition}\label{prop:abcuniqueness}
For every triple of integers $a,b,c\geq1$ with no common factor such that $a,b\mid c$ and $a,b<c$, there is a unique $n\geq3$ for which there is a smooth arithmetical structure on $D_n$ with $r_x=a$, $r_y=b$, and $r_0=c$.  Moreover, this smooth arithmetical structure on $D_n$ with $r_x=a$, $r_y=b$, and $r_0=c$ is unique.
\end{proposition}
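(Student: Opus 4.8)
The plan is to show that the hypotheses on $a,b,c$ force every entry of both $\vd$ and $\vr$ in any smooth arithmetical structure with $r_x=a$, $r_y=b$, $r_0=c$; existence, uniqueness of the structure, and uniqueness of $n$ will then all fall out of a single deterministic construction. First I would pin down the data near the central vertex. The leaf equations $d_xr_x=r_0$ and $d_yr_y=r_0$ give $d_x=c/a$ and $d_y=c/b$, which are integers at least $2$ precisely because $a,b\mid c$ and $a,b<c$. By Proposition~\ref{prop:centraldis1} we must have $d_0=1$, so the equation at $v_0$ reads $r_0=r_x+r_y+r_1$ when $n\ge4$ and $r_0=r_x+r_y$ when $n=3$. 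This already splits the analysis into the degenerate case $c=a+b$, which forces $n=3$ and the structure $(\vd,\vr)=\bigl((c/a,c/b,1),(a,b,c)\bigr)$ on $D_3$ (smooth since $d_x,d_y\ge2$), and the generic case $c>a+b$, where $r_1=c-a-b$ is a positive integer strictly less than $r_0$ and necessarily $n\ge4$.

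The heart of the argument is to show that, once $r_0=c$ and $r_1=c-a-b$ are fixed, the remaining entries $r_2,\dots,r_\ell$ and the integers $d_1,\dots,d_\ell$ are uniquely determined. I would argue inductively using the internal recursion $r_{i+1}=d_ir_i-r_{i-1}$ together with the smoothness characterization of Lemma~\ref{lem:smoothcharacterization}, which requires $r_0>r_1>\dots>r_\ell>0$. Given a strictly decreasing pair $r_{i-1}>r_i>0$, the constraint $0<r_{i+1}<r_i$ is equivalent to $r_{i-1}/r_i<d_i<r_{i-1}/r_i+1$, an open interval of length $1$; hence there is at most one admissible integer $d_i$, namely $\ceil{r_{i-1}/r_i}$, and it exists exactly when $r_i\nmid r_{i-1}$. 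When instead $r_i\mid r_{i-1}$, no value of $d_i$ yields $0<r_{i+1}<r_i$, so the path cannot be continued and the only possibility is $r_{i+1}=0$, i.e.\ $v_i$ is the terminal vertex $v_\ell$ with $d_\ell=r_{\ell-1}/r_\ell$. Thus smoothness forces the greedy (subtractive Euclidean) step at every stage and forces termination exactly at the first index where divisibility occurs. Since $\ceil{r_{i-1}/r_i}\ge2$ whenever $r_{i-1}>r_i$, the resulting $d_i$ automatically satisfy $d_i\ge2$. This determinism shows simultaneously that the structure is unique and that the number $\ell$ of path vertices, hence $n=\ell+3$, is unique.

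For existence I would run this construction forward, setting $(r_0,r_1)=(c,c-a-b)$ and repeatedly applying $r_{i+1}=\ceil{r_{i-1}/r_i}\,r_i-r_{i-1}$ until a divisibility $r_i\mid r_{i-1}$ is reached. Because the $r_i$ form a strictly decreasing sequence of positive integers, the process terminates after finitely many steps, producing some $\ell\ge0$ and hence some $n\ge3$. It remains to check that the resulting pair $(\vd,\vr)$ is a genuine smooth arithmetical structure: the recursion holds by construction at every internal and terminal vertex, the leaf and central equations hold by the choices above, all $d_i\ge2$, and $r_0>r_1>\dots>r_\ell>0$, so Lemma~\ref{lem:smoothcharacterization} certifies smoothness. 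Finally, one verifies the no-common-factor condition: since each $r_{i-1}$ is an integer combination of $r_i$ and $r_{i+1}$ (and vice versa), $\gcd(r_0,\dots,r_\ell)=\gcd(r_0,r_1)=\gcd(c,a+b)$, whence $\gcd(r_x,r_y,r_0,\dots,r_\ell)=\gcd(a,b,c)=1$.

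I expect the main obstacle to be the forcing step in the second paragraph: making rigorous that smoothness leaves exactly one admissible $d_i$ at each interior vertex, and that termination is both necessary and possible only at a divisibility step, together with the bookkeeping that cleanly distinguishes the degenerate case $c=a+b$ (yielding $n=3$) from the generic case. The remaining verifications — that the constructed data satisfy all defining equations and that $\vr$ is primitive — are routine once the uniqueness of the greedy step is in hand.
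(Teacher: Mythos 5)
Your proposal is correct and follows essentially the same route as the paper's proof: Proposition~\ref{prop:centraldis1} forces $d_0=1$ and hence $r_1=c-a-b$ (with the degenerate case $c=a+b$ forcing $n=3$), and then smoothness forces each subsequent $r_{i+1}$ to be the unique integer with $0<r_{i+1}<r_i$ and $r_i\mid r_{i-1}+r_{i+1}$, terminating at the first divisibility. Your write-up is somewhat more explicit than the paper's about why exactly one $d_i$ is admissible at each step, about termination of the forward construction, and about primitivity of $\vr$, but the underlying argument is the same.
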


\begin{proof}
Let $a$, $b$, and $c$ satisfy the given conditions, and suppose $c/a=c/b=2$. The fact that the three numbers share no common factor implies that $a=b=1$ and $c=2$. Setting $r_x=a$, $r_y=b$, and $r_0=c$ gives an arithmetical structure on $D_3$. Moreover, this does not give an arithmetical structure on $D_n$ for any $n\geq4$ since Proposition~\ref{prop:centraldis1} would require that $r_1=r_0-r_x-r_y=0$.

If $r_0/r_x$ and $r_0/r_y$ are not both $2$, Proposition~\ref{prop:centraldis1} says $r_1=r_0-r_x-r_y$. Whenever $r_i$ does not divide $r_{i-1}$, we must have that $r_{i+1}$ is the unique integer with $0<r_{i+1}<r_i$ such that $r_i\mid r_{i-1}+r_{i+1}$; Lemma~\ref{lem:smoothcharacterization} and the definition of arithmetical structure allow for no other possibility. We thus obtain a unique sequence $\{r_i\}$ that terminates with $r_{\ell}$, where $r_{\ell}\mid r_{\ell-1}$. We must therefore have  $n=\ell+3$, and this construction yields the unique smooth arithmetical structure on $D_n$ with $r_x=a$, $r_y=b$, and $r_0=c$.
\end{proof}

We note that a sequence $\{r_i\}$ with $0<r_{i+1}<r_i$ and $r_{i+1} \equiv -r_{i-1} \pmod {r_i}$ as in the proof of Proposition~\ref{prop:abcuniqueness} is what is referred to as a \emph{Euclidean chain} in~\cite{AB}.

We conclude this subsection by making the following observations applicable to both smooth and non-smooth arithmetical structures that will be used later.

\begin{lemma}\label{lem:gcd}
Let $n\geq4$, let $(\vd,\vr)$ be an arithmetical structure on $D_n$, and let $\ell=n-3$. Then
\begin{enumerate}[$(a)$]
\item $\gcd(r_x,r_y)=1$, and
\item $\gcd(r_0,r_1)=r_{\ell}$.
\end{enumerate}
\end{lemma}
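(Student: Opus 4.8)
The plan is to exploit the linear recurrences that the defining equation $L(D_n,\vd)\vr=\vzero$ imposes on the entries of $\vr$; neither part will require smoothness. Reading off the equations vertex by vertex, the leaf conditions give $d_x r_x = r_0$ and $d_y r_y = r_0$, the central vertex $v_0$ gives $d_0 r_0 = r_x + r_y + r_1$, each interior path vertex $v_i$ with $1\le i\le\ell-1$ gives $d_i r_i = r_{i-1}+r_{i+1}$, and the terminal vertex $v_\ell$ gives $d_\ell r_\ell = r_{\ell-1}$. These relations, together with the hypothesis that the entries $r_x, r_y, r_0, r_1, \dotsc, r_\ell$ have no common factor, are all that is needed.

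For part $(a)$, I would set $g = \gcd(r_x, r_y)$ and show that $g$ in fact divides every entry of $\vr$, forcing $g=1$. Since $d_x r_x = r_0$, we have $g\mid r_0$, and then the central equation $r_1 = d_0 r_0 - r_x - r_y$ gives $g\mid r_1$. An induction along the path using $r_{i+1} = d_i r_i - r_{i-1}$ then shows $g\mid r_i$ for every $i$. Thus $g$ is a common factor of all entries of $\vr$, so $g=1$.

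For part $(b)$, the key observation is that each interior equation is exactly a step of the Euclidean algorithm: rewriting $d_i r_i = r_{i-1}+r_{i+1}$ as $r_{i-1}\equiv -r_{i+1}\pmod{r_i}$ shows $\gcd(r_{i-1},r_i)=\gcd(r_i,r_{i+1})$ for each $i\in\{1,\dotsc,\ell-1\}$. Telescoping these equalities yields
\[
\gcd(r_0,r_1)=\gcd(r_1,r_2)=\dotsb=\gcd(r_{\ell-1},r_\ell).
\]
The terminal equation $d_\ell r_\ell = r_{\ell-1}$ shows $r_\ell\mid r_{\ell-1}$, whence $\gcd(r_{\ell-1},r_\ell)=r_\ell$, and the chain of equalities gives $\gcd(r_0,r_1)=r_\ell$.

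Both arguments are elementary, and the only point requiring care is the treatment of the two boundary vertices, which obey equations of a different shape from the interior path vertices: the central vertex $v_0$ carries the extra summands $r_x$ and $r_y$ (used to launch the induction in part $(a)$), while the terminal vertex $v_\ell$ supplies the base case $r_\ell\mid r_{\ell-1}$ in part $(b)$. When $\ell=1$ the telescoping in part $(b)$ is vacuous, but the terminal relation still gives $\gcd(r_0,r_1)=r_1=r_\ell$ directly, so the statement holds for all $n\ge 4$.
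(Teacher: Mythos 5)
Your proposal is correct and follows essentially the same route as the paper: part $(a)$ shows any common divisor of $r_x$ and $r_y$ propagates along the recurrence to divide every entry of $\vr$, and part $(b)$ telescopes the identity $\gcd(r_i,r_{i+1})=\gcd(r_{i+1},r_{i+2})$ down to the terminal relation $r_{\ell-1}=d_\ell r_\ell$. Your explicit remark on the $\ell=1$ boundary case is a harmless addition; the paper handles it implicitly since the telescoping chain is then vacuous.
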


\begin{proof}
First consider $(a)$. Let $c$ be a positive integer that divides $r_x$ and $r_y$. Since $r_x\mid r_0$, we have that $c\mid r_0$. Since $r_1=d_0r_0-r_x-r_y$, we have that $c\mid r_1$. Since $r_{i}=d_{i-1}r_{i-1}-r_{i-2}$ for all $i$ satisfying $2\leq i\leq \ell$, we have that $c\mid r_i$ for all $i$. Since $\vr$ is primitive, this means $c=1$. Therefore $\gcd(r_x,r_y)=1$.

To show $(b)$, first note that, for all $i$ satisfying $0\leq i\leq \ell-2$, we have that $r_i=d_{i+1}r_{i+1}-r_{i+2}$. Therefore
\[
\gcd(r_i,r_{i+1})=\gcd(d_{i+1}r_{i+1}-r_{i+2},r_{i+1})=\gcd(-r_{i+2},r_{i+1})=\gcd(r_{i+1},r_{i+2}).
\]
Repeatedly applying this gives that $\gcd(r_0,r_1)=\gcd(r_{\ell-1},r_{\ell})$. Since $r_{\ell-1}=d_{\ell}r_{\ell}$, we have that $\gcd(r_{\ell-1},r_{\ell})=r_{\ell}$. The result follows.
\end{proof}

\subsection{Smoothing and subdivision}\label{sec:subdivide}

We now discuss the complementary operations of \emph{smoothing} and \emph{subdivision} of arithmetical structures on bidents. At vertices of degree $2$, our notions of smoothing and subdivision are the same as those found in~\cite{B17}. However, we also allow smoothing at vertices of degree $1$ and subdivision to create new vertices of degree $1$. For the convenience of the reader, we describe the notions from~\cite{B17} that we use, as well as the aforementioned extension to degree 1 vertices. The proofs of many of the results in this subsection and the next are generalizations of the proofs given in that paper.  We include them here both to highlight the differences and to keep this article self-contained.

\subsubsection{Process of smoothing}

Let $n\geq 4$, and let $(\vd,\vr)$ be an arithmetical structure on $D_n$. If $d_i=1$ for some $i\in\{1,2,\dotsc,\ell-1\}$, we can obtain a new arithmetical structure $(\vd', \vr')$ on $D_{n-1}$ by essentially removing the vertex $v_i$ and  leaving the $\vr$-labeling unchanged for the remaining vertices, while adjusting the $\vd$-labeling in the appropriate manner. To be precise, we define vectors $\vr'$ and $\vd'$ of length $n-1$ as follows:
\begin{align*}
r'_j&=\begin{cases}
r_j&\text{for }j\in\{x,y,0,1,\dotsc,i-1\}\\
r_{j+1}&\text{for }j\in\{i,i+1,\dotsc,\ell-1\},
\end{cases}\\
d'_j&=\begin{cases}
d_j&\text{for }j\in\{x,y,0,1,\dotsc,i-2\}\\
d_j-1&\text{for }j=i-1\\
d_{j+1}-1&\text{for }j=i\\
d_{j+1}&\text{for }j\in\{i+1,i+2,\dotsc,\ell-1\}.
\end{cases}
\end{align*}
It is straightforward to check that $(\vd',\vr')$ satisfies the defining equations of an arithmetical structure on $D_{n-1}$. To show that it is an arithmetical structure, it remains only to verify that $\vd'\in\mathbb{Z}^{n-1}_{>0}$, which follows from~\cite[Lemma 6]{B17}. We refer to the operation described above that takes in an arithmetical structure on $D_n$ and returns one on $D_{n-1}$ as \emph{smoothing at vertex $v_i$} or \emph{smoothing at position $i$}.
An example of this smoothing process is shown in Figure~\ref{fig:smooth1}.

\begin{figure}
\centering
\resizebox{\textwidth}{!}{
\begin{tabular}{cc}
\subcaptionbox{$\vr$-labeling before smoothing}{\fbox{
\begin{tikzpicture}
\node (0) at (-1.5,1) [circle,draw=black,fill=black, label=above:{$7$},inner sep=0pt, minimum size=.18cm]{};
\node (1) at (-1.5,-1) [circle,draw=black,fill=black,label=below:{$2$},inner sep=0pt, minimum size=.18cm]{};
\node (a) at (0,0) [circle,draw=black,fill=black, label=above:{$14$},inner sep=0pt, minimum size=.18cm]{};
\node (b) at (1.75,0) [circle,draw=black,fill=black, label=above:{$5$},inner sep=0pt, minimum size=.18cm]{};
\node (c) at (3.5,0) [circle,draw=black,fill=black, label=above:{$6$},inner sep=0pt, minimum size=.18cm]{};
\node (d) at (5.25,0) [circle,draw=black,fill=black, label=above:{$1$},inner sep=0pt, minimum size=.18cm]{};
\draw (0)--(a);
\draw (1)--(a);
\draw (a) -- (b);
\draw (b) -- (c);
\draw (c) -- (d);
\end{tikzpicture}}}
&
\subcaptionbox{$\vd$-labeling before smoothing}{\fbox{%
\begin{tikzpicture}
\node (0) at (-1.5,1) [circle,draw=black,fill=black, label=above:{$2$},inner sep=0pt, minimum size=.18cm]{};
\node (1) at (-1.5,-1) [circle,draw=black,fill=black,label=below:{$7$},inner sep=0pt, minimum size=.18cm]{};
\node (a) at (0,0) [circle,draw=black,fill=black, label=above:{$1$},inner sep=0pt, minimum size=.18cm]{};
\node (b) at (1.75,0) [circle,draw=black,fill=black, label=above:{$4$},inner sep=0pt, minimum size=.18cm]{};
\node (c) at (3.5,0) [circle,draw=black,fill=black, label=above:{$1$},inner sep=0pt, minimum size=.18cm]{};
\node (d) at (5.25,0) [circle,draw=black,fill=black, label=above:{$6$},inner sep=0pt, minimum size=.18cm]{};
\draw (0)--(a);
\draw (1)--(a);
\draw (a) -- (b);
\draw (b) -- (c);
\draw (c) -- (d);
\end{tikzpicture}}}
\\[20pt]
\subcaptionbox{$\vr$-labeling after smoothing at $v_2$}{\fbox{%
\begin{tikzpicture}
\node (0) at (-1.5,1) [circle,draw=black,fill=black, label=above:{$7$},inner sep=0pt, minimum size=.18cm]{};
\node (1) at (-1.5,-1) [circle,draw=black,fill=black,label=below:{$2$},inner sep=0pt, minimum size=.18cm]{};
\node (a) at (0,0) [circle,draw=black,fill=black, label=above:{$14$},inner sep=0pt, minimum size=.18cm]{};
\node (b) at (1.75,0) [circle,draw=black,fill=black, label=above:{$5$},inner sep=0pt, minimum size=.18cm]{};
\node (d) at (5.25,0) [circle,draw=black,fill=black, label=above:{$1$},inner sep=0pt, minimum size=.18cm]{};
\draw (0)--(a);
\draw (1)--(a);
\draw (a) -- (b);
\draw (b) -- (d);
\end{tikzpicture}}}
&
\subcaptionbox{$\vd$-labeling after smoothing at $v_2$}{\fbox{%
\begin{tikzpicture}
\node (0) at (-1.5,1) [circle,draw=black,fill=black, label=above:{$2$},inner sep=0pt, minimum size=.18cm]{};
\node (1) at (-1.5,-1) [circle,draw=black,fill=black,label=below:{$7$},inner sep=0pt, minimum size=.18cm]{};
\node (a) at (0,0) [circle,draw=black,fill=black, label=above:{$1$},inner sep=0pt, minimum size=.18cm]{};
\node (b) at (1.75,0) [circle,draw=black,fill=black, label=above:{$3$},inner sep=0pt, minimum size=.18cm]{};
\node (d) at (5.25,0) [circle,draw=black,fill=black, label=above:{$5$},inner sep=0pt, minimum size=.18cm]{};
\draw (0)--(a);
\draw (1)--(a);
\draw (a) -- (b);
\draw (b) -- (d);
\end{tikzpicture}}}
\end{tabular}
}
\caption{Pictured above is an arithmetical structure on $D_6$, represented in (a) by its $\vr$-labeling and in (b) by its $\vd$-labeling. Since $d_2=1$, we can smooth at vertex $v_2$ to obtain the arithmetical structure on $D_5$ represented in (c) by its $\vr$-labeling and in (d) by its $\vd$-labeling.\label{fig:smooth1}}
\end{figure}
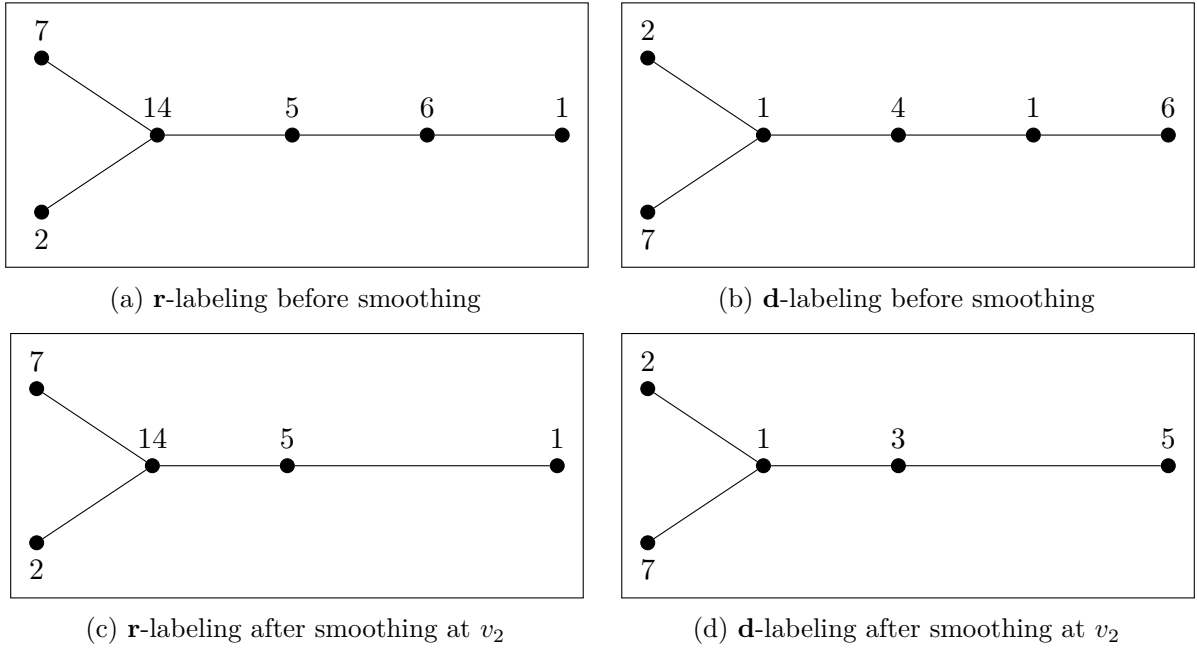

Now, let us describe how we can extend this smoothing operation to vertices of degree 1. There are three degree $1$ vertices of $D_n$: the one at the end of the tail and the two at the end of the ``prongs'' of the bident. Let us first consider the vertex $v_\ell$ at the end of the tail. If $d_\ell=1$, we can obtain a new arithmetical structure $(\vd',\vr')$ on $D_{n-1}$ by removing vertex $v_\ell$ and decreasing $d_{\ell-1}$ by $1$. That is, define $d_i'=d_i$ when $i\neq \ell-1$ and take $d'_{\ell-1}=d_{\ell-1}-1$. The corresponding $\vr$-labeling remains unchanged under this smoothing (except that $r_\ell$ no longer appears). We refer to this operation as \emph{smoothing at vertex $v_\ell$} or \emph{smoothing at position $\ell$}. See Figure~\ref{fig:smooth2} for an example of this operation.

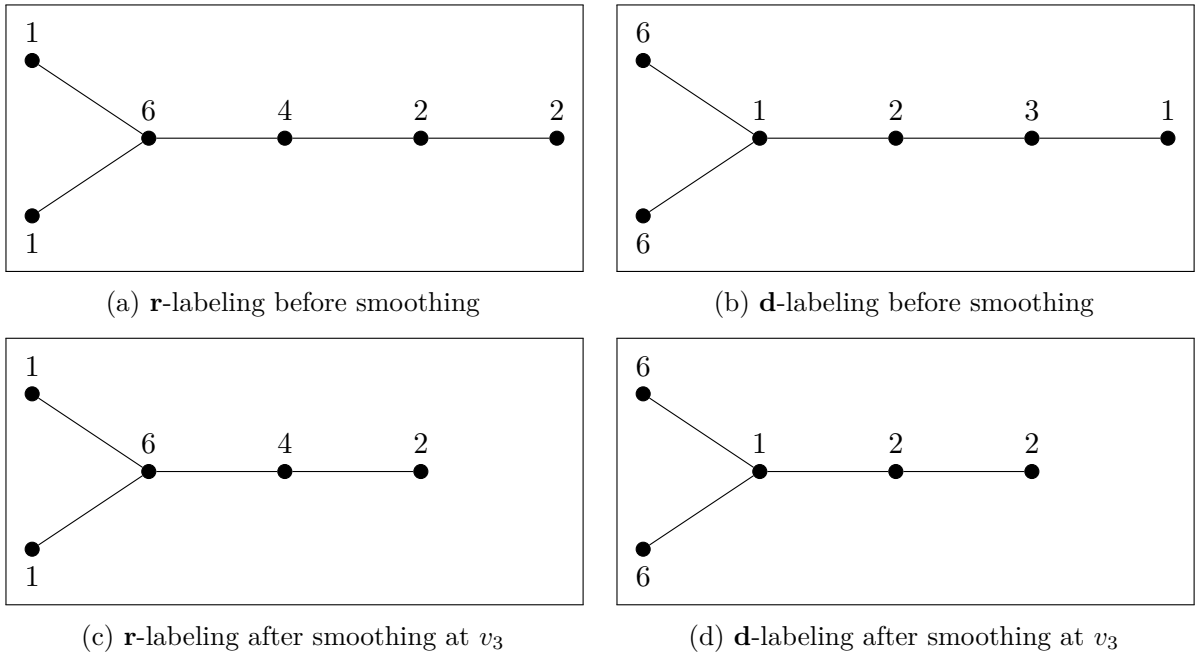
\begin{figure}
\centering
\resizebox{\textwidth}{!}{
\begin{tabular}{cc}
\subcaptionbox{$\vr$-labeling before smoothing}{\fbox{%
\begin{tikzpicture}
\node (0) at (-1.5,1) [circle,draw=black,fill=black, label=above:{$1$},inner sep=0pt, minimum size=.18cm]{};
\node (1) at (-1.5,-1) [circle,draw=black,fill=black,label=below:{$1$},inner sep=0pt, minimum size=.18cm]{};
\node (a) at (0,0) [circle,draw=black,fill=black, label=above:{$6$},inner sep=0pt, minimum size=.18cm]{};
\node (b) at (1.75,0) [circle,draw=black,fill=black, label=above:{$4$},inner sep=0pt, minimum size=.18cm]{};
\node (c) at (3.5,0) [circle,draw=black,fill=black, label=above:{$2$},inner sep=0pt, minimum size=.18cm]{};
\node (d) at (5.25,0) [circle,draw=black,fill=black, label=above:{$2$},inner sep=0pt, minimum size=.18cm]{};
\draw (0)--(a);
\draw (1)--(a);
\draw (a) -- (b);
\draw (b) -- (c);
\draw (c) -- (d);
\end{tikzpicture}}}
&
\subcaptionbox{$\vd$-labeling before smoothing}{\fbox{%
\begin{tikzpicture}
\node (0) at (-1.5,1) [circle,draw=black,fill=black, label=above:{$6$},inner sep=0pt, minimum size=.18cm]{};
\node (1) at (-1.5,-1) [circle,draw=black,fill=black,label=below:{$6$},inner sep=0pt, minimum size=.18cm]{};
\node (a) at (0,0) [circle,draw=black,fill=black, label=above:{$1$},inner sep=0pt, minimum size=.18cm]{};
\node (b) at (1.75,0) [circle,draw=black,fill=black, label=above:{$2$},inner sep=0pt, minimum size=.18cm]{};
\node (c) at (3.5,0) [circle,draw=black,fill=black, label=above:{$3$},inner sep=0pt, minimum size=.18cm]{};
\node (d) at (5.25,0) [circle,draw=black,fill=black, label=above:{$1$},inner sep=0pt, minimum size=.18cm]{};
\draw (0)--(a);
\draw (1)--(a);
\draw (a) -- (b);
\draw (b) -- (c);
\draw (c) -- (d);
\end{tikzpicture}}}
\\[20pt]
\subcaptionbox{$\vr$-labeling after smoothing at $v_3$}{\fbox{%
\begin{tikzpicture}
\node (0) at (-1.5,1) [circle,draw=black,fill=black, label=above:{$1$},inner sep=0pt, minimum size=.18cm]{};
\node (1) at (-1.5,-1) [circle,draw=black,fill=black,label=below:{$1$},inner sep=0pt, minimum size=.18cm]{};
\node (a) at (0,0) [circle,draw=black,fill=black, label=above:{$6$},inner sep=0pt, minimum size=.18cm]{};
\node (b) at (1.75,0) [circle,draw=black,fill=black, label=above:{$4$},inner sep=0pt, minimum size=.18cm]{};
\node (d) at (3.5,0) [circle,draw=black,fill=black, label=above:{$2$},inner sep=0pt, minimum size=.18cm]{};
\node (c) at (5.25,0) [circle,draw=white,fill=white, label=above:{\phantom{$2$}},inner sep=0pt, minimum size=.18cm]{};
\draw (0)--(a);
\draw (1)--(a);
\draw (a) -- (b);
\draw (b) -- (d);
\end{tikzpicture}}}
&
\subcaptionbox{$\vd$-labeling after smoothing at $v_3$}{\fbox{%
\begin{tikzpicture}
\node (0) at (-1.5,1) [circle,draw=black,fill=black, label=above:{$6$},inner sep=0pt, minimum size=.18cm]{};
\node (1) at (-1.5,-1) [circle,draw=black,fill=black,label=below:{$6$},inner sep=0pt, minimum size=.18cm]{};
\node (a) at (0,0) [circle,draw=black,fill=black, label=above:{$1$},inner sep=0pt, minimum size=.18cm]{};
\node (b) at (1.75,0) [circle,draw=black,fill=black, label=above:{$2$},inner sep=0pt, minimum size=.18cm]{};
\node (d) at (3.5,0) [circle,draw=black,fill=black, label=above:{$2$},inner sep=0pt, minimum size=.18cm]{};
\node (c) at (5.25,0) [circle,draw=white,fill=white, label=above:{\phantom{$2$}},inner sep=0pt, minimum size=.18cm]{};
\draw (0)--(a);
\draw (1)--(a);
\draw (a) -- (b);
\draw (b) -- (d);
\end{tikzpicture}}}
\end{tabular}
}
\caption{Pictured above is an arithmetical structure on $D_6$, represented in (a) by its $\vr$-labeling and in (b) by its $\vd$-labeling. Since $d_3=1$, we can smooth at vertex $v_3$ (the end of the tail) to obtain the arithmetical structure on $D_5$ represented in (c) by its $\vr$-labeling and in (d) by its $\vd$-labeling.\label{fig:smooth2}}
\end{figure}

Finally, we can also smooth at the vertex at the end of one of the ``prongs'' of the bident when $d_y=1$ (or $d_x=1$). In this case, we can find an arithmetical structure $(\vd', \vr')$ by taking the entries of $\vd'$ and $\vr'$ to be equal to the corresponding ones in $\vd$ and $\vr$ except that $d_0'=d_0-1$. In this case, the resulting arithmetical structure is an arithmetical structure on a path graph and not a bident. It is again straightforward to check that $(\vd',\vr')$ is indeed an arithmetical structure. We call this process \emph{smoothing at vertex $v_y$ \textup{(}or $v_x$\textup{)}} or \emph{smoothing at position $y$ \textup{(}or $x$\textup{)}}. If $d_x=d_y=1$, one could perform this operation at both vertices $v_x$ and $v_y$ and obtain the arithmetical structure $(\vd'', \vr'')$ on the remaining path graph obtained by taking $d_0''=d_0-2$, and leaving all other corresponding entries unchanged.

In each case, we refer to the new arithmetical structure $(\vd',\vr')$ on a graph with fewer vertices as a \emph{smoothing} of $(\vd,\vr)$. Note that it is possible to perform a smoothing operation on any arithmetical structure on $D_n$ that is not smooth, i.e.\ those arithmetical structures for which there is some $i\in\{x,y,1,2,\dotsc,\ell\}$ with $d_i=1$. Thus, smooth arithmetical structures on $D_n$ are precisely those on which no smoothing operation can be performed. If an arithmetical structure $(\vd',\vr')$ on $D_m$ can be obtained from an arithmetical structure $(\vd,\vr)$ on $D_n$ by a sequence of smoothing operations, we say that $(\vd',\vr')$ is an \emph{ancestor} of $(\vd,\vr)$. (Indeed, it is an ancestor in the poset of arithmetical structures under the ordering induced by this operation.)

\begin{lemma}\label{lem:uniquesmoothancestor}
Every arithmetical structure on $D_n$ with $d_x,d_y\geq2$ has a unique smooth ancestor on $D_m$ for some $m$ satisfying $3\leq m\leq n$.
\end{lemma}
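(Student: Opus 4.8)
The plan is to prove existence and uniqueness separately, handling uniqueness not by a confluence argument but by exhibiting $(r_x,r_y,r_0)$ as an invariant of the smoothing process and invoking Proposition~\ref{prop:abcuniqueness}.

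First I would establish existence by iterating smoothing. The crucial observation is that, because we assume $d_x,d_y\geq2$, the prong-smoothing operations (which require $d_x=1$ or $d_y=1$) are never available; every smoothing we perform is either at an interior vertex $v_i$ with $i\in\{1,\dotsc,\ell-1\}$ or at the tail vertex $v_\ell$. Inspecting the definitions of these two operations, neither alters $d_x$ or $d_y$, so the inequalities $d_x,d_y\geq2$ persist throughout. If the current structure is not smooth, then since $d_x,d_y\geq2$, non-smoothness forces $d_i=1$ for some $i\in\{1,\dotsc,\ell\}$, and we may smooth there, reducing the number of vertices by one. As the number of vertices is bounded below by $3$ (an arithmetical structure on $D_3$ with $d_x,d_y\geq2$ is automatically smooth, since then $\ell=0$), this process terminates after finitely many steps at a smooth arithmetical structure on some $D_m$ with $3\leq m\leq n$, which is a smooth ancestor.

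For uniqueness, I would show that every smoothing in our process preserves the triple $(r_x,r_y,r_0)$. For smoothing at an interior vertex $v_i$ with $i\geq1$, the indices $x,y,0$ all lie in the range on which $\vr$ is left unchanged; for smoothing at the tail vertex $v_\ell$, only $r_\ell$ is removed and the remaining $\vr$-labeling is untouched. Hence any smooth ancestor of $(\vd,\vr)$ has the same values $r_x=a$, $r_y=b$, $r_0=c$ as the original structure. I would then verify that $(a,b,c)$ satisfies the hypotheses of Proposition~\ref{prop:abcuniqueness}: we have $a,b\mid c$ and $a,b<c$ because $d_xr_x=r_0$ and $d_yr_y=r_0$ with $d_x,d_y\geq2$, and Lemma~\ref{lem:gcd}$(a)$ gives $\gcd(r_x,r_y)=1$, so $a$, $b$, $c$ share no common factor. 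Proposition~\ref{prop:abcuniqueness} then asserts that there is a unique smooth arithmetical structure on a bident with these prescribed values. Since every smooth ancestor is such a structure, all smooth ancestors coincide, giving uniqueness.

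The step one might expect to be the main obstacle is confluence: a priori a non-smooth structure may admit several valid smoothings, and one would need to check that different orders of smoothing lead to the same smooth result (for instance via a local diamond property combined with termination). The approach above sidesteps this entirely, since the invariance of $(r_x,r_y,r_0)$ together with the uniqueness already proved in Proposition~\ref{prop:abcuniqueness} makes any order-independence argument unnecessary. The only points requiring care are the boundary case $n=3$ (where the structure is already smooth, so $m=n$) and the verification that the invariant triple meets the divisibility, inequality, and primitivity hypotheses of Proposition~\ref{prop:abcuniqueness}; both follow immediately from the assumption $d_x,d_y\geq2$ and Lemma~\ref{lem:gcd}.
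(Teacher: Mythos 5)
Your proof is correct, and while the existence half is the same iteration the paper uses, your uniqueness argument takes a genuinely different route. The paper argues that every smoothing deletes a tail vertex whose $r$-value is a local maximum, so the $\vr$-labels along the tail of any smooth ancestor form the maximal decreasing subsequence of $r_0,r_1,\dotsc,r_\ell$ and are therefore forced; uniqueness of the ancestor follows from that. You instead observe that the much smaller invariant $(r_x,r_y,r_0)$ is preserved by every smoothing available under the hypothesis $d_x,d_y\geq2$, check that this triple satisfies the divisibility, inequality, and coprimality hypotheses (via $d_x,d_y\geq2$ and Lemma~\ref{lem:gcd}$(a)$, which applies since the only case with $n=3$ is already smooth), and then let Proposition~\ref{prop:abcuniqueness} do the work of pinning down both $m$ and the structure. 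This cleanly sidesteps any confluence or order-of-operations reasoning, which the paper's argument handles only implicitly; the trade-off is that your route leans on Proposition~\ref{prop:abcuniqueness}, whose proof contains essentially the same ``the sequence $\{r_i\}$ is forced'' reasoning that the paper deploys directly here, so the two arguments are cousins rather than strangers. Both are valid; yours is arguably the more modular packaging, and your explicit remark that prong-smoothings are never triggered (so $d_x,d_y\geq2$ persists) is a point the paper states only briefly.
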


\begin{proof}
Let $(\vd,\vr)$ be an arithmetical structure on $D_n$ with $d_x,d_y\geq2$. If $d_i=1$ for some $i\in\{1,2,\dotsc,\ell\}$, perform a smoothing operation at vertex $v_i$. Repeat until obtaining an arithmetical structure $(\vd',\vr')$ on $D_{n-s}$ (where $s$ is the number of smoothing operations that have been performed) with $d'_i>1$ for all $i\geq 1$. (In the worst case, this process will terminate when this condition is satisfied vacuously, i.e.\ when we are left with an arithmetical structure on $D_3$.) Note that each step eliminates a vertex $v_i$ where $r_{i-1},r_{i+1}<r_i$. Therefore the remaining sequence $r'_0,r'_1,\dotsc,r'_{\ell-s}$ is the maximal decreasing subsequence of $r_0,r_1,\dotsc,r_{\ell}$ (since the entries of the $\vr$ vector entries remain unchanged, except via deletion, under smoothing), and hence is uniquely determined. Moreover, $d_x$ and $d_y$ are unchanged by these operations. The arithmetical structure $(\vd',\vr')$ is thus the unique smooth ancestor of $(\vd,\vr)$.
\end{proof}

\subsubsection{Process of subdivision}

We now discuss \emph{subdivision}, which is the inverse operation of smoothing. Given an arithmetical structure $(\vd,\vr)$ on $D_n$, we obtain an arithmetical structure $(\vd',\vr')$ on $D_{n+1}$ by adding a vertex in the tail of the graph $D_n$, assigning it a $\vd$-label of 1 and an $\vr$-label given by the sum of the $\vr$-labels of its neighboring vertices. Adding the vertex at the end of the tail is also allowed, in which case its corresponding entry in $\vr$ is equal to that of its neighbor. More precisely, for $i$ with $1\leq i\leq \ell$, we define vectors $\vr'$ and $\vd'$ of length $n+1$ as follows:
\begin{align*}
r'_j&=\begin{cases}
r_j&\text{for }j\in\{x,y,0,1,\dotsc,i-1\}\\
r_{j-1}+r_j&\text{for }j=i\\
r_{j-1}&\text{for }j\in\{i+1,i+2,\dotsc,\ell+1\},
\end{cases} \\
d'_j&=\begin{cases}
d_j&\text{for }j\in\{x,y,0,1,\dotsc,i-2\}\\
d_j+1&\text{for }j=i-1\\
1&\text{for }j=i\\
d_{j-1}+1&\text{for }j=i+1\\
d_{j-1}&\text{for }j\in\{i+2,i+3,\dotsc,\ell+1\},
\end{cases}
\end{align*}
and for $i=\ell+1$, we define $\vr'$ and $\vd'$ as follows:
\begin{align*}
r'_j&=\begin{cases}
r_j&\text{for }j\in\{x,y,0,1,\dotsc,\ell\}\\
r_{j-1}&\text{for }j=\ell+1,
\end{cases} \\
d'_j&=\begin{cases}
d_j&\text{for }j\in\{x,y,0,1,\dotsc,\ell-1\}\\
d_j+1&\text{for }j=\ell\\
1&\text{for }j=\ell+1.
\end{cases}
\end{align*}
In both cases, it is straightforward to check that $(\vd',\vr')$ is an arithmetical structure on $D_{n+1}$. We call $(\vd',\vr')$ the \emph{subdivision at position $i$} of $(\vd,\vr)$. An example of subdivision in the interior of the tail is shown in Figure~\ref{fig:subd1}, and subdivision at the end of the tail is the inverse of the smoothing operation shown in Figure~\ref{fig:smooth2}.

The subdivision operation that is inverse to smoothing at $v_y$ begins with an arithmetical structure on a path graph and adds a new vertex $v_y$, connecting it to $v_0$ by a single edge and setting $r'_y=r_0$, $d'_y=1$, and $d'_0=d_0+1$ while leaving the other $r$-values and $d$-values unchanged. We call this operation \emph{subdivision at position $y$}. We can similarly define subdivision at position $x$. More generally, we could define a subdivision operation on an arithmetical structure $(\vd,\vr)$ on any graph by adding a new vertex $v_y$, connecting it by a single edge to any other vertex $v_0$ in the graph, and setting $r'_y=r_0$, $d'_y=1$, and $d'_0=d_0+1$ while leaving the other $r$-values and $d$-values unchanged.

If an arithmetical structure $(\vd,\vr)$ on $D_n$ can be obtained from an arithmetical structure $(\vd',\vr')$ on $D_m$ by a sequence of subdivision operations,  we say that $(\vd',\vr')$ is a \emph{descendant} of $(\vd,\vr)$. Note that every smoothing operation has an inverse subdivision operation and vice versa. Therefore  $(\vd,\vr)$ is a descendant of $(\vd',\vr')$ if and only if $(\vd',\vr')$ is an ancestor of $(\vd,\vr)$.

\subsection{Subdivision sequences and counting}

Let $(\vd^0,\vr^0)$ be an arithmetical structure on $D_m$, with $3\leq m\leq n$. We say that a sequence of positive integers $\mathbf{b}=(b_1,b_2,\dotsc,b_{n-m})$ is a \emph{valid subdivision sequence} for $(\vd^0,\vr^0)$ if its entries satisfy $1\leq b_i\leq m-3+i$. We inductively define an arithmetical structure $\Sub((\vd^0,\vr^0),\mathbf{b})$ on $D_n$ from this sequence $\mathbf{b}$ as follows. Let $(\vd^i,\vr^i)$ be the arithmetical structure on $D_{m+i}$ obtained from the arithmetical structure $(\vd^{i-1},\vr^{i-1})$ on $D_{m+i-1}$ by subdividing at position $b_i$, which we can do as long as $1 \le b_i \le m-3+i$.  We then define
\[
\Sub((\vd^0,\vr^0),\mathbf{b})\coloneqq(\vd^{n-m},\vr^{n-m}).
\]
If $m=n$, then $\mathbf{b}$ is the empty sequence and $\Sub((\vd^0,\vr^0),\mathbf{b})=(\vd^0,\vr^0)$. If $m=3$, then the condition requires that $b_1=1$, meaning we must first subdivide at position $1$ to obtain an arithmetical structure on $D_4$. Note that the descendants of $(\vd^0,\vr^0)$ are exactly those arithmetical structures of the form $\Sub((\vd^0,\vr^0),\mathbf{b})$ for some such sequence $\mathbf{b}$.

\begin{lemma}\label{lem:descendantequivalence}
Let $3\leq m\leq n$, let $(\vd^0,\vr^0)$ be an arithmetical structure on $D_m$, and let $\mathbf{b}=(b_1,b_2,\dotsc,b_{n-m})$ be a valid subdivision sequence for $(\vd^0,\vr^0)$. Suppose $j$ is a positive integer satisfying $1\leq j<n-m$ with $b_j>b_{j+1}$. Define $\mathbf{b}'=(b'_1,b'_2,\dotsc,b'_{n-m})$ by
\[
b'_i=\begin{cases}
b_{j+1}&\text{for }i=j\\
b_j+1&\text{for }i=j+1\\
b_i&\text{otherwise.}
\end{cases}
\]
Then $\Sub((\vd^0,\vr^0),\mathbf{b})=\Sub((\vd^0,\vr^0),\mathbf{b'})$.
\end{lemma}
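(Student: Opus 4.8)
The plan is to reduce the statement to a purely local identity about two consecutive subdivisions and then verify that identity by direct substitution into the subdivision formulas.

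First I would observe that $\mathbf{b}$ and $\mathbf{b}'$ agree in every coordinate except $j$ and $j+1$, so the arithmetical structures $(\vd^i,\vr^i)$ produced by the two sequences coincide for all $i\leq j-1$; call this common structure $S$, an arithmetical structure on $D_{m+j-1}$. Likewise, once I show that the two sequences yield the same structure after step $j+1$, the fact that $b_i=b'_i$ for $i\geq j+2$ forces all later structures to coincide as well, since identical subdivisions applied to identical structures produce identical results. (Along the way I would record that $\mathbf{b}'$ is again a valid subdivision sequence: from $b_{j+1}<b_j\leq m-3+j$ we get $b'_j=b_{j+1}\leq m-4+j$ and $b'_{j+1}=b_j+1\leq m-2+j=m-3+(j+1)$, so both new entries lie in the allowed ranges.) Thus everything reduces to the single commutation identity: writing $p=b_j$ and $q=b_{j+1}$ with $1\leq q<p$, subdividing $S$ first at $p$ and then at $q$ gives the same arithmetical structure as subdividing $S$ first at $q$ and then at $p+1$.

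The shift from $p$ to $p+1$ is the conceptual heart of the identity: subdividing at the smaller position $q$ first pushes the vertex originally at position $p$ one step down the tail to position $p+1$, so the second subdivision must target $p+1$ in order to act on the same vertex it acted on in the other order. To verify the identity rigorously I would simply apply the explicit formulas for $\vr'$ and $\vd'$ from the definition of subdivision twice, in each order, and compare the resulting length-$(m+j+1)$ vectors coordinate by coordinate. Setting $\ell'=m+j-4$ for the tail length of $S$, this splits into the generic case $p\leq\ell'$, where both subdivisions use the interior formula, and the boundary case $p=\ell'+1$, where one subdivision in each order uses the end-of-tail formula; the two cases are handled identically apart from the bookkeeping at the final vertex.

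The step I expect to be the main obstacle is the adjacent sub-case $p=q+1$, where the two operations no longer modify disjoint stretches of the tail: each subdivision adds $1$ to a $\vd$-entry in the overlapping region, and one must check that these increments accumulate consistently in both orders. Concretely, in either order the vertex originally at position $q=p-1$ receives one $+1$ increment from each subdivision and ends with $\vd$-value $d_q+2$, the two inserted vertices each receive $\vd$-value $1$, and the surrounding $\vr$-values agree by the same partial-sum computation, so the two outputs coincide. When $p\geq q+2$ the modified regions are disjoint and the comparison is immediate; the only remaining care is in tracking how the insertion at $q$ renumbers the coordinates at and beyond position $p$, which is exactly what the replacement of $p$ by $p+1$ compensates for.
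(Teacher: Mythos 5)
Your proof is correct and takes essentially the route the paper intends: the paper gives no detailed argument, asserting only that the lemma ``follows directly from the definitions'' (citing the analogous Lemma~13 of~\cite{B17}), and your write-up carries out exactly that verification. You correctly isolate the local commutation identity, the shift from $p$ to $p+1$ caused by the earlier insertion at $q<p$, the end-of-tail boundary case, and the adjacent case $p=q+1$ where the shared vertex accumulates two increments to its $\vd$-value.
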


This lemma is the same as~\cite[Lemma 13]{B17} except that it also allows for subdivision at vertex $v_{\ell + 1}$, and the proof follows directly from the definitions. As an example, observe that the arithmetical structure shown in Figure~\ref{fig:subd1}(d) can be obtained from the arithmetical structure shown in Figure~\ref{fig:subd1}(a) using any of $\mathbf{b}=(2,2,1)$, $(2,1,3)$, or $(1,3,3)$. Lemma~\ref{lem:descendantequivalence} implies that the order of subdivision along the tail does not matter unless the subdivisions are adjacent to each other. The following lemma and its proof are similar to~\cite[Proposition 14]{B17}.

\begin{lemma}\label{lem:ancestor}
Fix an arithmetical structure $(\vd^0,\vr^0)$ on $D_m$ with $d^0_i\geq2$ for all $i\in\{1,2,\dotsc,m-3\}$. There is a bijection between arithmetical structures on $D_n$ that are descendants of $(\vd^0,\vr^0)$ and valid subdivision sequences $\mathbf{b}=(b_1,b_2,\dotsc,b_{n-m})$ that additionally satisfy $b_i\leq b_{i+1}$ for all $i$.
\end{lemma}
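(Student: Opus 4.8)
The plan is to establish the bijection between descendants of $(\vd^0,\vr^0)$ on $D_n$ and weakly increasing valid subdivision sequences by combining the surjectivity of the subdivision map (every descendant arises from some valid sequence, as noted before the statement) with Lemma~\ref{lem:descendantequivalence} to collapse each equivalence class of sequences producing the same structure down to a unique weakly increasing representative. First I would recall that, by definition, the descendants of $(\vd^0,\vr^0)$ on $D_n$ are precisely the structures $\Sub((\vd^0,\vr^0),\mathbf{b})$ as $\mathbf{b}$ ranges over all valid subdivision sequences of length $n-m$. The map sending a sequence to its resulting structure is therefore surjective onto the set of descendants, so the content of the lemma is that restricting to weakly increasing sequences makes it a bijection: each descendant is hit by exactly one such sequence.

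To prove surjectivity onto descendants using only weakly increasing sequences, I would take an arbitrary valid sequence $\mathbf{b}$ and repeatedly apply the swap of Lemma~\ref{lem:descendantequivalence} at any adjacent descent $b_j>b_{j+1}$, replacing $(b_j,b_{j+1})$ by $(b_{j+1},b_j+1)$ without changing the resulting arithmetical structure. The key observation is that this swap is a genuine sorting move: I would argue it strictly decreases a suitable monovariant (for instance the number of inversions, or the lexicographic rank, or $\sum_i i\cdot b_i$), so that iterating must terminate in a sequence with no adjacent descents, that is, a weakly increasing sequence, which produces the same structure as the original $\mathbf{b}$. One must check that the swapped sequence remains a \emph{valid} subdivision sequence (its entries satisfy $1\le b'_i\le m-3+i$); this follows because $b_{j+1}\le m-3+(j+1)$ forces $b_j+1\le m-3+(j+1)$ after the shift in position, and $b_{j+1}$ placed in position $j$ satisfies the looser bound there since $b_{j+1}<b_j\le m-3+j$.

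For injectivity on weakly increasing sequences, I would show that a weakly increasing valid sequence can be recovered from the descendant it produces. The cleanest route is to read off, from the final structure on $D_n$ together with its ancestor $(\vd^0,\vr^0)$, exactly which of the $n-m$ vertices of the tail were created by subdivision; since $d^0_i\ge2$ for all interior tail positions of the ancestor, the newly inserted vertices are precisely those carrying $\vd$-label $1$ away from the prongs, and the positions at which they sit determine the multiset $\{b_i\}$ and hence, once sorted into weakly increasing order, the unique candidate sequence. Equivalently, by the unique-smooth-ancestor philosophy of Lemma~\ref{lem:uniquesmoothancestor}, the iterated smoothing of the descendant back to $(\vd^0,\vr^0)$ records an unordered collection of smoothing positions, and among all sequences realizing that collection exactly one is weakly increasing.

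The main obstacle I anticipate is the bookkeeping in the surjectivity step: I must be certain that the Lemma~\ref{lem:descendantequivalence} swap can always be applied to kill any adjacent descent while preserving validity, and that the chosen monovariant genuinely decreases under every such swap so that the bubble-sort-style process terminates at a weakly increasing sequence. The positional index shift built into the definition of subdivision---where subdividing at position $b_i$ can later affect whether a given position is valid---is exactly the subtlety that makes this more than a trivial sort, so the validity check for $\mathbf{b}'$ and the termination argument are where the real care is needed; the injectivity half is comparatively routine once the recovery of subdivision positions from the $\vd$-labels is spelled out.
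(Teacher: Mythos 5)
Your surjectivity argument is sound and is essentially the paper's: starting from an arbitrary valid sequence realizing the descendant, repeatedly apply the swap of Lemma~\ref{lem:descendantequivalence} at an adjacent descent; the swap preserves validity, and it strictly decreases the sequence in lexicographic order, so on the finite set of valid sequences the process terminates at a nondecreasing sequence realizing the same structure. (Be careful which monovariant you cite, though: the inversion count need not decrease under $(b_j,b_{j+1})\mapsto(b_{j+1},b_j+1)$, since any later entry equal to $b_j$ acquires a new inversion against the entry $b_j+1$; and $\sum_i i\,b_i$ \emph{increases} rather than decreases, though it is bounded above. Lexicographic order does work.)

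The injectivity half has a genuine gap. You claim that the vertices created by subdivision are precisely those carrying $\vd$-label $1$ in the final structure, so that their positions determine the multiset $\{b_i\}$. This is false: a vertex inserted with label $1$ can have that label incremented by a later adjacent subdivision. The paper's own Figure~\ref{fig:subd1} is a counterexample: the three subdivisions $\mathbf{b}=(1,3,3)$ applied to the structure on $D_4$ produce a structure on $D_7$ whose tail has $\vd$-labels $(2,1,6,1,2)$, so only two tail vertices carry label $1$ even though three were inserted (the vertex added at the end of the tail in the second step ends at position $4$ with label $2$). In particular the multiset you propose to read off does not even have the right cardinality. Your fallback sentence --- that iterated smoothing records an unordered collection of positions, of which exactly one ordering is weakly increasing --- is too vague to close the gap: the position at which a smoothing occurs depends on which smoothings have already been performed, so the ``unordered collection'' is not well defined without fixing a canonical order. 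The paper supplies exactly the missing ingredient: using the hypothesis $d^0_i\geq2$, one shows inductively that for a nondecreasing valid sequence, after $i$ subdivisions the entry $b_i$ is the \emph{largest} index $j$ with $d^i_j=1$; hence the sequence is recovered from the descendant by repeatedly smoothing at the largest position carrying label $1$, which is what proves uniqueness.
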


\begin{proof}
If $(\vd,\vr)$ is an arithmetical structure on $D_n$ that is a descendant of $(\vd^0,\vr^0)$, then $(\vd,\vr)=\Sub((\vd^0,\vr^0),\mathbf{b}')$ for some $\mathbf{b}'=(b'_1,b'_2,\dotsc b'_{n-m})$ satisfying $1\leq b'_i\leq m-3+i$ for all~$i$. Repeatedly applying Lemma~\ref{lem:descendantequivalence} then shows that $(\vd,\vr)$ is equal to $\Sub((\vd^0,\vr^0),\mathbf{b})$ for some sequence~$\mathbf{b}$ of the desired type.

The sequence $\mathbf{b}$ has the property that, at each stage of the subdivision, $b_i$ is the largest value of $j$ such that $d^i_j=1$. Starting with an arithmetical structure $(\vd,\vr)$ on $D_n$ that is a descendant of $(\vd^0,\vr^0)$ and repeatedly subdividing at position $j$, where $j$ is the largest number with $d_j=1$, therefore shows how to recover $\mathbf{b}$ and implies there is a unique such sequence for each descendant of $(\vd^0,\vr^0)$.
\end{proof}

\begin{figure}
\centering
\begin{tabular}{cc}
\subcaptionbox{original structure on $D_4$}{\fbox{%
\begin{tikzpicture}[scale=0.84]
\node (0) at (-1.5,1) [circle,draw=black,fill=black, label=above:{$3$},inner sep=0pt, minimum size=.18cm]{};
\node (1) at (-1.5,-1) [circle,draw=black,fill=black,label=below:{$1$},inner sep=0pt, minimum size=.18cm]{};
\node (a) at (0,0) [circle,draw=black,fill=black, label=above:{$6$},inner sep=0pt, minimum size=.18cm]{};
\node (b) at (1.75,0) [circle,draw=black,fill=black, label=above:{$2$},inner sep=0pt, minimum size=.18cm]{};
\node (c) at (3.5,0) [circle,draw=white,fill=white, label=above:{\phantom{$2$}},inner sep=0pt, minimum size=.18cm]{};
\node (d) at (5.25,0) [circle,draw=white,fill=white, label=above:{\phantom{$2$}},inner sep=0pt, minimum size=.18cm]{};
\node (e) at (7,0) [circle,draw=white,fill=white, label=above:{\phantom{$2$}},inner sep=0pt, minimum size=.18cm]{};
\draw (0)--(a);
\draw (1)--(a);
\draw (a) -- (b);
\end{tikzpicture}}}
&
\subcaptionbox{after the first step}{\fbox{%
\begin{tikzpicture}[scale=0.84]
\node (0) at (-1.5,1) [circle,draw=black,fill=black, label=above:{$3$},inner sep=0pt, minimum size=.18cm]{};
\node (1) at (-1.5,-1) [circle,draw=black,fill=black,label=below:{$1$},inner sep=0pt, minimum size=.18cm]{};
\node (a) at (0,0) [circle,draw=black,fill=black, label=above:{$6$},inner sep=0pt, minimum size=.18cm]{};
\node (b) at (1.75,0) [circle,draw=black,fill=black, label=above:{$8$},inner sep=0pt, minimum size=.18cm]{};
\node (c) at (3.5,0) [circle,draw=black,fill=black, label=above:{$2$},inner sep=0pt, minimum size=.18cm]{};
\node (d) at (5.25,0) [circle,draw=white,fill=white, label=above:{\phantom{$2$}},inner sep=0pt, minimum size=.18cm]{};
\node (e) at (7,0) [circle,draw=white,fill=white, label=above:{\phantom{$2$}},inner sep=0pt, minimum size=.18cm]{};
\draw (0)--(a);
\draw (1)--(a);
\draw (a) -- (b);
\draw (b) -- (c);
\end{tikzpicture}}}
\\[20pt]
\subcaptionbox{after the second step}{\fbox{%
\begin{tikzpicture}[scale=0.84]
\node (0) at (-1.5,1) [circle,draw=black,fill=black, label=above:{$3$},inner sep=0pt, minimum size=.18cm]{};
\node (1) at (-1.5,-1) [circle,draw=black,fill=black,label=below:{$1$},inner sep=0pt, minimum size=.18cm]{};
\node (a) at (0,0) [circle,draw=black,fill=black, label=above:{$6$},inner sep=0pt, minimum size=.18cm]{};
\node (b) at (1.75,0) [circle,draw=black,fill=black, label=above:{$8$},inner sep=0pt, minimum size=.18cm]{};
\node (c) at (3.5,0) [circle,draw=black,fill=black, label=above:{$2$},inner sep=0pt, minimum size=.18cm]{};
\node (d) at (5.25,0) [circle,draw=black,fill=black, label=above:{$2$},inner sep=0pt, minimum size=.18cm]{};
\node (e) at (7,0) [circle,draw=white,fill=white, label=above:{\phantom{$2$}},inner sep=0pt, minimum size=.18cm]{};
\draw (0)--(a);
\draw (1)--(a);
\draw (a) -- (b);
\draw (b) -- (c);
\draw (c) -- (d);
\end{tikzpicture}}}
&
\subcaptionbox{after the last step}{\fbox{%
\begin{tikzpicture}[scale=0.84]
\node (0) at (-1.5,1) [circle,draw=black,fill=black, label=above:{$3$},inner sep=0pt, minimum size=.19cm]{};
\node (1) at (-1.5,-1) [circle,draw=black,fill=black,label=below:{$1$},inner sep=0pt, minimum size=.19cm]{};
\node (a) at (0,0) [circle,draw=black,fill=black, label=above:{$6$},inner sep=0pt, minimum size=.19cm]{};
\node (b) at (1.75,0) [circle,draw=black,fill=black, label=above:{$8$},inner sep=0pt, minimum size=.19cm]{};
\node (c) at (3.5,0) [circle,draw=black,fill=black, label=above:{$2$},inner sep=0pt, minimum size=.19cm]{};
\node (d) at (5.25,0) [circle,draw=black,fill=black, label=above:{$4$},inner sep=0pt, minimum size=.19cm]{};
\node (e) at (7,0) [circle,draw=black,fill=black, label=above:{$2$},inner sep=0pt, minimum size=.19cm]{};

\draw (0)--(a);
\draw (1)--(a);
\draw (a) -- (b);
\draw (b) -- (c);
\draw (c) -- (d);
\draw (d) -- (e);
\end{tikzpicture}}}
\end{tabular}
\caption{Starting with the arithmetical structure illustrated in (a) via the $\vr$-labeling of the graph $D_4$, we use the sequence $\mathbf{b} = (1, 3, 3)$ to obtain the arithmetical structure illustrated in (d) on $D_7$. The steps are shown as follows. To obtain the structure in (b), subdivide the structure in (a) at position 1. To obtain the structure in (c), subdivide the structure in (b) at position 3 (the end of the tail). Finally, to obtain the resulting structure in (d), subdivide the structure pictured in (c) at position 3.\label{fig:subd1}}
\end{figure}
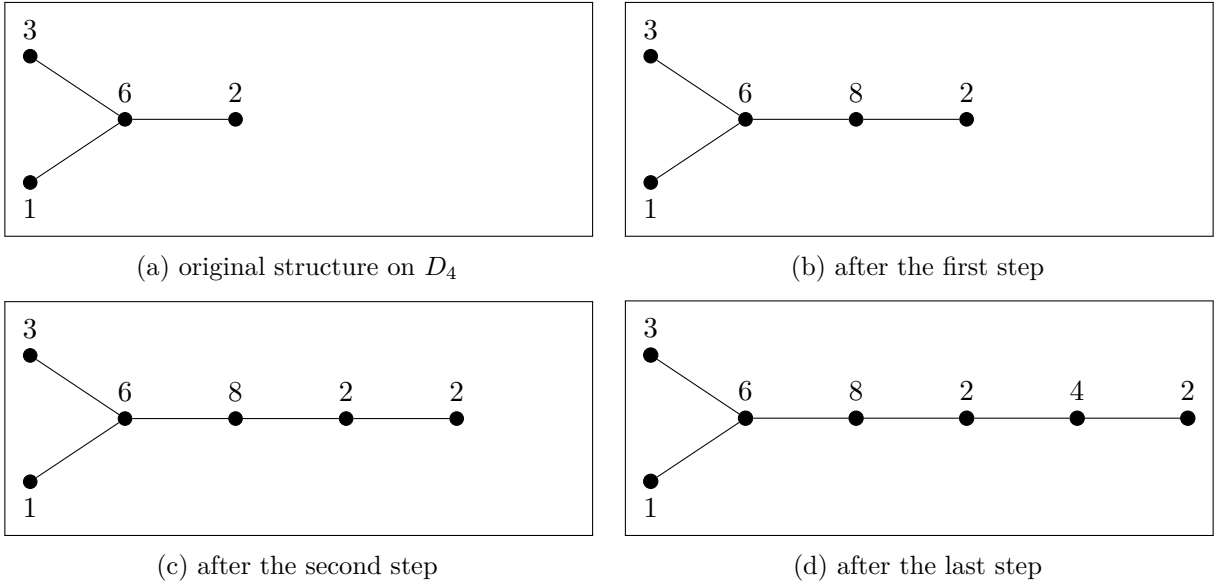

Let $C(n)$ denote the \emph{Catalan numbers}~\cite[A009766]{OEIS}, defined for all $n\geq0$ by the formula
\[
C(n)=\frac{1}{n+1}\binom{2n}{n},
\]
and let $B(n,k)$ denote the so-called \emph{ballot numbers}, defined for all $n \ge k \ge 0$ by the formula
\[
B(n,k) = \frac{n-k+1}{n+1}\binom{n+k}{n}.
\]
The ballot numbers are a generalization of the Catalan numbers that were first studied by Carlitz~\cite{Carlitz}. They can alternatively be defined by setting $B(n,0)=1$ for all $n$, $B(n,k)=0$ for all $k>n$ and otherwise $B(n,k) = B(n,k-1) +B(n-1,k)$. The ballot numbers will be used to enumerate nondecreasing valid subdivision sequences, but we first establish the following lemma, which is the analogue of~\cite[Lemma 15]{B17}.

\begin{lemma}\label{ballot}
For any $n\geq 1$ and $n \ge k \ge 0$, the number of nondecreasing sequences $(b_1,b_2,\dotsc, b_n)$ with $b_i\leq i$ for all $i$, such that additionally $b_j=1$ for all $j\leq k$, is equal to $B(n,n-k)$.
\end{lemma}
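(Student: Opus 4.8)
The plan is to write $f(n,k)$ for the number of sequences described in the statement and to prove $f(n,k)=B(n,n-k)$ by induction on $n$, together with a secondary downward induction on $k$ for each fixed $n$. The starting observations handle the two boundary values of $k$: since $1\le b_1\le 1$ forces $b_1=1$, every sequence counted by $f(n,0)$ automatically begins with a $1$, so $f(n,0)=f(n,1)$; and the only sequence counted by $f(n,n)$ is the all-ones sequence, so $f(n,n)=1$. On the ballot-number side these should match $B(n,0)=1$ and $B(n,n)=\tfrac{1}{n+1}\binom{2n}{n}=C(n)$, using in addition the one-line identity $B(n,n-1)=B(n,n)$ (both equal $C(n)$). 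Thus it remains to show that $f$ and $B$ obey the same recurrence for the intermediate values $1\le k\le n-1$.

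The heart of the argument is the recurrence
\[
f(n,k)=f(n,k+1)+f(n-1,k-1),\qquad 1\le k\le n-1,
\]
which I would establish combinatorially by splitting the sequences counted by $f(n,k)$ according to the value of $b_{k+1}$. If $b_{k+1}=1$, then (by monotonicity) the first $k+1$ entries are all $1$, so the sequence is counted by $f(n,k+1)$. If $b_{k+1}\ge 2$, I would delete one of the leading $1$'s and decrease every subsequent entry by $1$: explicitly, send $(b_1,\dots,b_n)$ to $(c_1,\dots,c_{n-1})$ with $c_i=1$ for $i<k$ and $c_i=b_{i+1}-1$ for $i\ge k$. A direct check shows that $c$ is nondecreasing, satisfies $c_i\le i$, and has its first $k-1$ entries equal to $1$, so it is counted by $f(n-1,k-1)$; the inverse map sets $b_i=1$ for $i\le k$ and $b_i=c_{i-1}+1$ for $i>k$, which indeed produces $b_{k+1}=c_k+1\ge 2$. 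Verifying that these two maps are mutually inverse and preserve all of the constraints is the step requiring the most care, although it is routine.

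Finally, I would match this with the defining ballot recurrence $B(N,M)=B(N,M-1)+B(N-1,M)$ evaluated at $N=n$ and $M=n-k$ (legitimate because $1\le n-k\le n-1$ in this range), which reads $B(n,n-k)=B(n,n-k-1)+B(n-1,n-k)$. Under the inductive hypotheses $f(n,k+1)=B(n,n-k-1)$ (from the downward induction on $k$, since the base $k=n$ is already settled) and $f(n-1,k-1)=B(n-1,n-k)$ (from the induction on $n$), the two recurrences coincide, giving $f(n,k)=B(n,n-k)$; the base case $n=1$ is immediate. I expect the bijection in the $b_{k+1}\ge 2$ case to be the only genuinely combinatorial step, with everything else being bookkeeping that aligns the two recurrences and their boundary values.
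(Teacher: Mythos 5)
Your proof is correct and follows essentially the same route as the paper: the same split according to whether $b_{k+1}=1$ or $b_{k+1}\geq 2$, the same delete-a-leading-one-and-shift bijection onto sequences of length $n-1$ with $k-1$ leading ones, and the same matching against the defining ballot recurrence $B(n,j)=B(n,j-1)+B(n-1,j)$. The only difference is organizational (you run an explicit double induction where the paper says both quantities satisfy the same recurrence and initial conditions), which is not a substantive distinction.
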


\begin{proof}
Let $\BB(n,k)$ denote the number of nondecreasing sequences with $b_i\leq i$ and beginning with at least $k$ leading ones.  We wish to show that $\BB(n,k)=B(n,n-k)$.  We will do so by showing that both satisfy the same initial conditions and the same recurrence relation.  In particular, we will show that $\BB(n,n)=1$ for all $n$, that $\BB(n,k)=0$ if $k<0$, and that $\BB(n,k)= \BB(n,k+1) + \BB(n-1,k-1)$.

The first two statements are clear, as there is a unique sequence of length $n$ with $n$ leading ones and there are no sequences with a negative number of leading ones. To see the third statement, note that the set of sequences of length $n$ with at least $k$ leading ones can be decomposed into two disjoint sets: those with at least $k+1$ leading ones (enumerated by $\BB(n,k+1)$) and those with exactly $k$ leading ones. If a sequence has exactly $k$ leading ones then it follows that $b_{k+1}>1$.  In particular, one can obtain a sequence of length $n-1$ with at least $k-1$ leading ones that we will call $b'$ by deleting the $k$-th occurrence of 1 in $b$ and subtracting 1 from each $b_i$ for $k+1\leq i \leq n$.  This process is invertible, which argues that the number of such sequences is $\BB(n-1,k-1)$.  In particular, we have shown that $\BB(n,k)=\BB(n,k+1)+\BB(n-1,k-1)$, proving the lemma.
\end{proof}

\begin{lemma}\label{lem:valid subs}
Fix $3\leq m\leq n$. There are $B(n-3,n-m)$ valid subdivision sequences $\mathbf{b}=(b_1,b_2,\dotsc,b_{n-m})$ that additionally satisfy $b_i\leq b_{i+1}$ for all $i$.
\end{lemma}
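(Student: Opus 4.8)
The plan is to reduce the statement directly to Lemma~\ref{ballot} by observing that a valid nondecreasing subdivision sequence is exactly the tail of a ballot-type sequence whose first $m-3$ entries are forced to equal $1$. Writing $N=n-m$ for the length, the objects to be counted are the nondecreasing sequences $(b_1,\dotsc,b_N)$ of positive integers satisfying $b_i\leq m-3+i$ for all $i$, and the target count is $B(n-3,n-m)$.

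First I would define the padding map. Given such a $\mathbf{b}$, form a sequence $\mathbf{c}=(c_1,\dotsc,c_{n-3})$ of length $(m-3)+(n-m)=n-3$ by setting $c_j=1$ for $1\leq j\leq m-3$ and $c_{m-3+i}=b_i$ for $1\leq i\leq N$. I claim this is a bijection onto the set of nondecreasing positive-integer sequences $(c_1,\dotsc,c_{n-3})$ with $c_j=1$ for all $j\leq m-3$ and $c_i\leq i$ for all $i$; the inverse simply deletes the first $m-3$ (forced) entries and recovers $\mathbf{b}$.

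The crux, and essentially the only thing to verify, is that the two constraint systems correspond. I would check three points: (i) $\mathbf{c}$ is nondecreasing, since the leading block is constant equal to $1$, the bound $b_1\geq1$ bridges into the tail without a descent, and $\mathbf{b}$ is nondecreasing by hypothesis; (ii) the leading entries automatically satisfy $c_j=1\leq j$; and (iii) for a tail index $i=m-3+i'$ with $1\leq i'\leq N$, the condition $c_i\leq i$ reads $b_{i'}\leq m-3+i'$, which is precisely the validity condition on $\mathbf{b}$. As $i$ ranges over the tail it covers exactly $m-2,\dotsc,n-3$, so $\mathbf{c}$ lands in the claimed set; conversely, every such $\mathbf{c}$ has its first $m-3$ entries equal to $1$ and thus arises uniquely from its tail.

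Finally I would invoke Lemma~\ref{ballot} with its length parameter equal to $n-3$ and its number of leading ones equal to $m-3$, yielding the count $B\bigl(n-3,(n-3)-(m-3)\bigr)=B(n-3,n-m)$. The only points needing care are the degenerate cases rather than any real obstacle: when $m=n$ the sequence $\mathbf{b}$ is empty and $\mathbf{c}=(1,\dotsc,1)$, matching $B(n-3,0)=1$; and when $m=3$ there is no padding, the validity condition is already $b_i\leq i$ in the form of Lemma~\ref{ballot} with $k=0$, and the count is $B(n-3,n-3)$. I expect the entire argument to be a clean reindexing, with the main risk being only the bookkeeping of the index shift.
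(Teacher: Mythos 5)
Your proposal is correct and follows essentially the same route as the paper: pad $\mathbf{b}$ with $m-3$ leading ones to get a nondecreasing sequence of length $n-3$ satisfying $b'_i\leq i$, observe this is a bijection onto the sequences counted by Lemma~\ref{ballot} with $k=m-3$, and read off the count $B(n-3,n-m)$. Your write-up is slightly more careful about verifying the bijection and the degenerate cases $m=n$ and $m=3$, but the argument is the same.
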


\begin{proof}
Let $\mathbf{b}=(b_1,b_2,\dotsc,b_{n-m})$ be a valid subdivision sequence with $b_i \le b_{i+1}$ for all $i$.  Define a new sequence $\mathbf{b}' = (b'_1,b'_2,\dotsc,b'_{n-3})$ by setting $b_i' = 1$ if $i \le m-3$ and $b_i'=b_{i-m+3}$ for $i>m-3$. We now have a nondecreasing sequence such that $b'_i \le i$ with an initial string of (at least) $m-3$ ones, so it satisfies the conditions of Lemma~\ref{ballot}.  One can easily check that this map is actually a bijection, and therefore it follows from the lemma that the number of such sequences is $B(n-3,n-m)$.
\end{proof}

\begin{proposition}\label{prop:descendantcount}
Fix $3\leq m\leq n$, and let $(\vd,\vr)$ be any smooth arithmetical structure on $D_m$. The number of arithmetical structures on $D_n$ that are descendants of $(\vd,\vr)$ is $B(n-3,n-m)$.
\end{proposition}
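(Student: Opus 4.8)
The plan is to prove this as an immediate corollary of the two preceding lemmas, since essentially all of the combinatorial work has already been done. The statement counts descendants of a fixed smooth structure, and Lemma~\ref{lem:ancestor} and Lemma~\ref{lem:valid subs} together already compute both sides of the relevant correspondence.

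First I would check that a smooth arithmetical structure satisfies the hypothesis needed to invoke Lemma~\ref{lem:ancestor}. That lemma requires only $d_i^0 \geq 2$ for all $i \in \{1,2,\dotsc,m-3\}$. By the definition of smoothness (with $\ell = m-3$), a smooth structure on $D_m$ has $d_x, d_y, d_1, \dotsc, d_{m-3} \geq 2$, so in particular $d_i \geq 2$ for every $i \in \{1,2,\dotsc,m-3\}$, and the hypothesis holds. (When $m=3$ the index set is empty and the condition is vacuous, which matches the fact that smoothness on $D_3$ only constrains $d_x$ and $d_y$.) Thus Lemma~\ref{lem:ancestor} applies and yields a bijection between the arithmetical structures on $D_n$ that are descendants of $(\vd,\vr)$ and the valid subdivision sequences $\mathbf{b} = (b_1, b_2, \dotsc, b_{n-m})$ satisfying $b_i \leq b_{i+1}$ for all $i$.

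Next I would count the objects on the other side of this bijection. Lemma~\ref{lem:valid subs} states precisely that the number of nondecreasing valid subdivision sequences of length $n-m$ is $B(n-3, n-m)$. Composing the bijection from Lemma~\ref{lem:ancestor} with this count gives that the number of descendants of $(\vd,\vr)$ on $D_n$ equals $B(n-3, n-m)$, which is the desired conclusion.

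The main point worth emphasizing is that there is no genuine obstacle here: the proposition is a clean composition of Lemma~\ref{lem:ancestor} (which converts descendants into nondecreasing subdivision sequences) and Lemma~\ref{lem:valid subs} (which enumerates those sequences via ballot numbers). The only substantive thing to verify is the compatibility of the smoothness hypothesis with the weaker hypothesis of Lemma~\ref{lem:ancestor}, and this follows directly from the definitions. Accordingly I would keep the written proof to a few sentences, simply citing the two lemmas in sequence.
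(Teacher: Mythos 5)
Your proof is correct and takes essentially the same route as the paper: invoke Lemma~\ref{lem:ancestor} to biject descendants with nondecreasing valid subdivision sequences, then count those with Lemma~\ref{lem:valid subs}. Your explicit check that smoothness implies the hypothesis $d_i^0\geq 2$ of Lemma~\ref{lem:ancestor} is a small addition the paper leaves implicit, but the argument is otherwise identical.
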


\begin{proof}
Note that Lemma~\ref{lem:ancestor} gives a bijection between arithmetical structures on $D_n$ that are descendants of a given arithmetical structure on $D_m$ and sequences $(b_1,b_2,\dotsc,b_{n-m})$ satisfying $1\leq b_i\leq m-3+i$ and $b_i\leq b_{i+1}$ for all $i$. By Lemma~\ref{lem:valid subs}, there are exactly $B(n-3,n-m)$ of these sequences.
\end{proof}

Let $\Arith(D_n)$ denote the set of arithmetical structures on $D_n$. We now count $\abs{\Arith(D_n)}$, the number of smooth arithmetical structures on $D_n$, in terms of $\abs{\SArith(D_m)}$, the number of smooth arithmetical structures on $D_m$, for all $m$ satisfying $4\leq m\leq n$.

\begin{theorem}\label{thm:smoothtooverall}
Let $n \ge 3$.  The number of arithmetical structures on $D_n$ is given by
\[
\abs{\Arith(D_n)}=2C(n-2)+\sum_{m=4}^nB(n-3,n-m)\abs{\SArith(D_m)}.
\]
\end{theorem}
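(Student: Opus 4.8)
The plan is to split $\Arith(D_n)$ according to the values of $d_x$ and $d_y$. After disposing of the base case separately, I would write $\Arith(D_n)$ for $n \ge 4$ as the disjoint union of the set $\mathcal A$ of structures with $d_x,d_y\geq 2$ and the set $\mathcal B$ of structures with $d_x=1$ or $d_y=1$, count each piece, and add. For the base case $n=3$, the graph $D_3$ is the path on three vertices, so by the count for paths in~\cite{B17} it carries $C(2)=2=2C(1)$ arithmetical structures while the sum in the statement is empty; the formula therefore holds, and I assume $n\geq 4$ from here on.

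To count $\mathcal A$, I would apply Lemma~\ref{lem:uniquesmoothancestor}: each structure in $\mathcal A$ has a unique smooth ancestor, necessarily on some $D_m$ with $3\leq m\leq n$. Because smoothing at $v_x$ or $v_y$ is possible only when $d_x=1$ or $d_y=1$, this ancestor is produced using tail-smoothings alone, so it is a smooth structure on a bident and leaves $d_x,d_y$ unchanged; conversely, subdivision in the tail preserves $d_x,d_y$, so every descendant of a smooth structure lies in $\mathcal A$. Thus $\mathcal A$ is the disjoint union, over all smooth structures on all $D_m$ with $3\leq m\leq n$, of their descendant sets on $D_n$, and Proposition~\ref{prop:descendantcount} yields $\abs{\mathcal A}=\sum_{m=3}^{n}B(n-3,n-m)\abs{\SArith(D_m)}$. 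I would then peel off the $m=3$ term: a direct check gives $\abs{\SArith(D_3)}=1$ (the structure $\vd=(2,2,1)$), and $B(n-3,n-3)=\frac{1}{n-2}\binom{2n-6}{n-3}=C(n-3)$, so $\abs{\mathcal A}=C(n-3)+\sum_{m=4}^{n}B(n-3,n-m)\abs{\SArith(D_m)}$.

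To count $\mathcal B$ I would use inclusion--exclusion together with the prong smoothing and subdivision operations. If $d_x=1$, then $r_x=r_0$, and from $d_0r_0=r_x+r_y+r_1$ we get $(d_0-1)r_0=r_y+r_1>0$, so $d_0\geq 2$ and smoothing at $v_x$ produces a valid arithmetical structure on the path $v_y,v_0,v_1,\dotsc,v_\ell$, which is $P_{n-1}$. Subdivision at position $x$ inverts this and always yields $d_x=1$, so the two operations give a bijection between the structures with $d_x=1$ and $\Arith(P_{n-1})$; hence there are $C(n-2)$ of them, and by symmetry $C(n-2)$ with $d_y=1$. For the overlap, $d_x=d_y=1$ forces $r_x=r_y=r_0$, whence Lemma~\ref{lem:gcd}$(a)$ gives $r_x=r_y=1$ and $r_0=1$, so $d_0=2+r_1\geq 3$; smoothing at both $v_x$ and $v_y$ then gives a bijection with $\Arith(P_{n-2})$, producing $C(n-3)$ such structures. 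Inclusion--exclusion yields $\abs{\mathcal B}=2C(n-2)-C(n-3)$.

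Adding the two counts, the $C(n-3)$ coming from the $m=3$ smooth structure cancels the $-C(n-3)$ from the overlap, leaving $\abs{\Arith(D_n)}=2C(n-2)+\sum_{m=4}^{n}B(n-3,n-m)\abs{\SArith(D_m)}$, as claimed. The main obstacle is verifying the prong bijections rigorously: one must confirm that the smoothing maps land in $\Arith(P_{n-1})$ and $\Arith(P_{n-2})$ (which rests on the positivity $d_0\geq 2$ and $d_0\geq 3$ above and on the identity $r_x=r_y=r_0=1$ forced by Lemma~\ref{lem:gcd}), that subdivision preserves the primitivity of $\vr$, and that smoothing and subdivision are mutually inverse. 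The separate treatment of $n=3$ is genuinely necessary, since there the double-smoothing would attempt to produce the empty path $P_1$ and the overlap bijection degenerates.
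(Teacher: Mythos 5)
Your proposal is correct and takes essentially the same approach as the paper: the same decomposition into structures with $d_x,d_y\geq 2$ (counted via unique smooth ancestors and Proposition~\ref{prop:descendantcount}) and structures with $d_x=1$ or $d_y=1$ (counted by inclusion--exclusion against arithmetical structures on path graphs), followed by the same cancellation of the $m=3$ term $B(n-3,n-3)\abs{\SArith(D_3)}=C(n-3)$ against the $-C(n-3)$ from the overlap. The additional care you take with the prong bijections and the separate $n=3$ base case elaborates details the paper leaves implicit but does not constitute a different argument.
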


\begin{proof}
We first count the number of arithmetical structures on $D_n$ with $d_x,d_y\geq2$. By Lemma~\ref{lem:uniquesmoothancestor}, each such arithmetical structure has a unique smooth ancestor on $D_m$ for some $m$ satisfying $3\leq m\leq n$. Proposition~\ref{prop:descendantcount} tells us that each smooth arithmetical structure on $D_m$ has $B(n-3,n-m)$ descendant arithmetical structures on $D_n$, and each of these has $d_x,d_y\geq2$. Thus the number of arithmetical structures on $D_n$ with $d_x,d_y\geq2$ is
\[
\sum_{m=3}^nB(n-3,n-m)\abs{\SArith(D_m)}.
\]

We next consider arithmetical structures on $D_n$ with $d_x=1$ or $d_y=1$. The set of arithmetical structures on $D_n$ with $d_x=1$ is in bijection with the set of arithmetical structures on the path graph with $n-1$ vertices by smoothing at the vertex $v_x$. Therefore, by~\cite{B17}, there are $C(n-2)$ such arithmetical structures. Similarly, there are $C(n-2)$ arithmetical structures on $D_n$ with $d_y=1$. The set of arithmetical structures on $D_n$ with $d_x=d_y=1$ is in bijection with the set of arithmetical structures on the path graph with $n-2$ vertices by smoothing at both $v_x$ and $v_y$, so there are $C(n-3)$ such structures. Thus there are $2C(n-2)-C(n-3)$ arithmetical structures on $D_n$ with $d_x=1$ or $d_y=1$.  Therefore the total number of arithmetical structures on $D_n$ is given by
\[
\abs{\Arith(D_n)}=2C(n-2)-C(n-3)+\sum_{m=3}^nB(n-3,n-m)\abs{\SArith(D_m)}.
\]

We simplify this expression by computing the term $B(n-3,n-3)\abs{\SArith(D_3)}$. By Proposition~\ref{prop:centraldis1}, a smooth arithmetical structure on $D_3$ must have $d_0=1$. Therefore $r_0=r_x+r_y=\frac{r_0}{d_x}+\frac{r_0}{d_y}$. Since $d_x,d_y\geq2$, this implies $d_x=d_y=2$. Hence there is a unique smooth arithmetical structure on $D_3$, namely that with $\vd=(2,2,1)$ and $\vr=(1,1,2)$, so $\abs{\SArith(D_3)}=1$. Also, $B(n-3,n-3)=C(n-3)$, so therefore $B(n-3,n-3)\abs{\SArith(D_3)}=C(n-3)$. Hence the above expression simplifies to give
\[
\abs{\Arith(D_n)}=2C(n-2)+\sum_{m=4}^nB(n-3,n-m)\abs{\SArith(D_m)}.\qedhere
\]
\end{proof}

When $n=3$, the sum in Theorem~\ref{thm:smoothtooverall} is empty, and therefore $\abs{\Arith(D_3)}=2C(3-2)=2$.

Note that Theorem~\ref{thm:smoothtooverall} shows $\abs{\Arith(D_n)}$ grows at least as fast as $2C(n-2)$. In Section~\ref{sec:bounds}, after establishing an upper bound on $\abs{\SArith(D_m)}$, we will obtain an upper bound on $\abs{\Arith(D_n)}$ that is also a multiple of $C(n-2)$, thus showing that $\abs{\Arith(D_n)}$ grows at the same rate as $C(n-2)$.

In summary, this section has reduced the problem of counting arithmetical structures on $D_n$ to that of counting smooth arithmetical structures on $D_m$ for all $m$ satisfying $4\leq m\leq n$. We address the question of counting smooth arithmetical structures on bidents in the next section.

\section{Counting smooth arithmetical structures}\label{sec:reduction}

By Theorem~\ref{thm:smoothtooverall}, in order to enumerate arithmetical structures on $D_n$, it is enough to restrict attention to smooth arithmetical structures on $D_n$ and smaller bidents. In this section, we determine the number of smooth arithmetical structures on $D_n$ in terms of a number-theoretic function $F$, defined in this section. We use these results in Section~\ref{sec:bounds} to understand the growth rates of the number of smooth arithmetical structures and the number of arithmetical structures on $D_n$ as $n$ increases.

In this section and the following, it will be convenient to use a scalar multiple of the primitive vector $\mathbf{r}$. Specifically, we instead work with $\rab=\frac{r_0}{r_xr_y}\mathbf{r}$. Since $r_x$ and $r_y$ both divide $r_0$ and $\gcd(r_x, r_y) =1$ by Lemma~\ref{lem:gcd}$(a)$, the vector $\rab$ is comprised of positive integer entries. We also note that $\rab$ is exactly the scalar multiple of $\vr$ such that $\r_x\r_y=\r_0$.

\subsection{Determining structures from \texorpdfstring{$\bm{\r_x}$}{r\_x} and \texorpdfstring{$\bm{\r_y}$}{r\_y}}

We first observe that the values of $\r_x$ and $\r_y$ uniquely determine a smooth arithmetical structure. The following proposition is an immediate corollary of Proposition~\ref{prop:abcuniqueness}, taking the triple from that proposition to be $(a',b',c')=(a, b, ab)/\gcd(a,b)$ and rescaling the $\vr$-vector.

\begin{proposition}\label{prop:abuniqueness}
For every pair of integers $a,b\geq2$, there is a unique $n\geq3$ such that there is a smooth arithmetical structure on $D_n$ with $\r_x=a$ and $\r_y=b$. Moreover, this smooth arithmetical structure on $D_n$ with $\r_x=a$ and $\r_y=b$ is unique.
\end{proposition}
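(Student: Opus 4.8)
The plan is to derive Proposition~\ref{prop:abuniqueness} as a direct corollary of Proposition~\ref{prop:abcuniqueness} by reducing the two-parameter problem ($\r_x = a$, $\r_y = b$) to the three-parameter problem ($r_x = a'$, $r_y = b'$, $r_0 = c'$) through an appropriate normalization. First I would recall that the $\rab$-vector is defined so that $\r_x \r_y = \r_0$, so the target smooth structure should satisfy $\r_0 = ab$ with $\r_x = a$ and $\r_y = b$. The subtlety is that Proposition~\ref{prop:abcuniqueness} applies to a \emph{primitive} triple $(r_x, r_y, r_0)$ sharing no common factor, whereas $(a, b, ab)$ need not be primitive: since $\gcd(a,b)$ may exceed $1$, the triple $(a, b, ab)$ could have a common factor. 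So the first step is to pass to the primitive triple by dividing out the appropriate common factor.

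Following the hint in the statement, I would set $(a', b', c') = (a, b, ab)/\gcd(a,b)$. The key verification is that this triple satisfies all the hypotheses of Proposition~\ref{prop:abcuniqueness}: that $a', b', c' \geq 1$ are integers, that they share no common factor, that $a' \mid c'$ and $b' \mid c'$, and that $a', b' < c'$. Writing $g = \gcd(a,b)$, we have $a' = a/g$, $b' = b/g$, and $c' = ab/g$. One checks $a' \mid c'$ since $c'/a' = b$ is an integer, and similarly $b' \mid c'$ since $c'/b' = a$; the strict inequalities $a' < c'$ and $b' < c'$ follow because $a, b \geq 2$ force $b, a \geq 2$ respectively (so $c'/a' = b \geq 2$ and $c'/b' = a \geq 2$). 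For primitivity, any common divisor of $a' = a/g$, $b' = b/g$, and $c' = ab/g$ in particular divides $a/g$ and $b/g$, whose gcd is $1$ by definition of $g$, so the three numbers share no nontrivial common factor. Proposition~\ref{prop:abcuniqueness} then yields a unique $n$ and a unique smooth arithmetical structure on $D_n$ with $r_x = a'$, $r_y = b'$, $r_0 = c'$.

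The final step is to rescale back to the $\rab$-normalization. The primitive $\vr$-vector produced has $r_x = a'$, $r_y = b'$, $r_0 = c'$, and the associated $\rab = \frac{r_0}{r_x r_y}\vr$ satisfies $\r_x = \frac{c'}{a' b'} a' = \frac{c'}{b'} = a$ and likewise $\r_y = \frac{c'}{a'} = b$, recovering exactly the prescribed values $\r_x = a$, $\r_y = b$. Uniqueness transfers cleanly: since the map from primitive structures to their $\rab$-rescalings is a bijection (the primitive vector and the rescaled vector determine each other), the uniqueness of $n$ and of the structure in the three-parameter setting immediately gives the corresponding uniqueness in the two-parameter setting.

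I do not expect any genuine obstacle here, as the argument is a bookkeeping reduction. The one point requiring care is ensuring that the passage $(a,b,ab) \mapsto (a,b,ab)/\gcd(a,b)$ produces a triple meeting \emph{all} the divisibility and strict-inequality hypotheses of Proposition~\ref{prop:abcuniqueness}, and that the rescaling back by $\r_0/(\r_x \r_y)$ exactly inverts the normalization — both of which are routine but must be stated to justify invoking the earlier result and to confirm that the resulting $\rab$ hits the prescribed $(a,b)$ on the nose.
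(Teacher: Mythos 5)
Your proposal is correct and is exactly the paper's argument: the paper dispatches this proposition in one line as an immediate corollary of Proposition~\ref{prop:abcuniqueness} applied to the triple $(a,b,ab)/\gcd(a,b)$, followed by rescaling the $\vr$-vector, and your write-up simply fills in the routine verifications of the hypotheses and the rescaling bookkeeping. The only point worth making fully explicit in the uniqueness direction is that any smooth structure with $\r_x=a$, $\r_y=b$ must have primitive triple exactly $(a,b,ab)/\gcd(a,b)$, which follows from Lemma~\ref{lem:gcd}$(a)$ since $\gcd(r_x,r_y)=1$ pins down the scalar.
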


We will obtain a more precise version of Proposition~\ref{prop:abuniqueness} in Theorem~\ref{T:length}. In order to do this, we first define a function $F\colon\mathbb{Z}_{>0} \times \mathbb{Z}_{\geq 0} \to \mathbb{Z}_{>0}$ as follows. Given a positive integer $x_1$ and a nonnegative integer $x_2$, we define a sequence $\{x_i\}$ by setting $x_{i+1}$ to be the least nonnegative residue of $-x_{i-1}$ modulo $x_i$, as long as $x_i>0$. Note that this means $x_{i+1}$ is the unique integer with $0\leq x_{i+1}<x_i$ and $x_i\mid x_{i-1}+x_{i+1}$. Let $k$ be the largest value of $i$ for which $x_i$ is nonzero (i.e.\ such that $x_k\mid x_{k-1}$ with $k\geq 2$). Define $F(x_1,x_2)=k$, the number of  positive terms in the sequence $\{x_i\}$. Note that, for any $x>0$, we have that $F(x,0)=1$, since there is only one positive term in the sequence.

As an example, suppose we want to compute $F(17,12)$. Then we take $x_1=17$ and $x_2=12$. The value of $x_3$ will be the least residue of $-17$ modulo 12. So $x_3= 7$. Notice that $7$ is also the smallest positive value of $x_3$ for which $12 \mid (17+x_3)$. We similarly compute $x_4=2$ and $x_5 = 1$. Since we must then have $x_6=0$, we determine that $F(17,12)=5$.

Comparing the definition of $F$ with the construction in the proof of Proposition~\ref{prop:abcuniqueness}, we see that, if we have a smooth arithmetical structure with $\r_0=x_1$ and $\r_1=x_2$, we must then have $\r_i=x_{i+1}$ for all $i$ satisfying $0\leq i\leq \ell$. This means that, if we have a smooth arithmetical structure on $D_n$ with $\rab= (\r_x, \r_y, \r_0, \r_1, \dotsc, \r_\ell)$, we then have $F(\r_0,\r_1)=\ell+1=n-2$, and hence that $n=F(\r_0,\r_1)+2$. It also follows from Lemma~\ref{lem:gcd}$(b)$ that, if $x_k$ is the last positive term in the sequence $\{x_i\}$, we must have $x_k=\gcd(x_1,x_2)$.

The function $F$ will be useful both in Theorem~\ref{T:length} below and in Section~\ref{sec:bounds}, where we will establish a relationship between $F$ and the Euclidean algorithm. We begin with a lemma.

\begin{lemma}\label{lem:Fbasics}
Let $x$ be a positive integer, and let $y$ and $k$ be nonnegative integers. We have the following:
\begin{enumerate}[ $(a)$]
\item $F(x,y)=F(x+ky,y)$,
\item $F(x,kx+y)=F(x,y)+k$,
\item $F(ax,ay)=F(x,y)$,
\item $F(x,x-1)=x$,
\item $F(x,y)\leq y+1$,
\item $F(x,y) \le \frac{x+1}{2}$ if $1 \le y \le x-2$.
\end{enumerate}
\end{lemma}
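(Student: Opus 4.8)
The plan is to base the entire proof on a single recursive identity for $F$. Writing $z$ for the least nonnegative residue of $-x$ modulo $y$, I claim that
\[
F(x,y) = 1 + F(y,z) \quad \text{whenever } y > 0,
\]
while $F(x,0)=1$ directly. This holds because the sequence $\{x_i\}$ used to compute $F(x,y)$ begins $x_1=x$, $x_2=y$, $x_3=z$, and the recurrence $x_{i+1}\equiv -x_{i-1}\pmod{x_i}$ shows that the terms from index $2$ onward are exactly the sequence used to compute $F(y,z)$; the positive-term counts therefore differ by exactly one, namely the extra leading term $x_1=x$. I would state and verify this identity first, since parts $(a)$ through $(e)$ all drop out of it quickly.

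For part $(a)$, when $y>0$ both $F(x,y)$ and $F(x+ky,y)$ reduce by the identity to $1+F(y,z)$ with the \emph{same} residue $z\equiv -x\equiv -(x+ky)\pmod y$; the case $y=0$ is trivial. For part $(c)$, multiplying the defining sequence for $(x,y)$ termwise by the positive integer $a$ yields the defining sequence for $(ax,ay)$, since both the congruence and the inequality $0\le x_{i+1}<x_i$ scale, and a term is positive if and only if its $a$-multiple is; hence the counts agree. Part $(b)$ follows by induction on $k$ from the single step $F(x,x+y)=1+F(x,y)$: the identity gives $F(x,x+y)=1+F(x+y,y)$ because $-x\equiv y\pmod{x+y}$, and then $F(x+y,y)=F(x,y)$ by part $(a)$. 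Part $(d)$ is another short induction: the identity gives $F(x,x-1)=1+F(x-1,x-2)$ since $-x\equiv x-2\pmod{x-1}$, with base case $F(1,0)=1$. Part $(e)$ is induction on $y$: for $y>0$ we have $F(x,y)=1+F(y,z)$ with $z\le y-1$, so the inductive hypothesis gives $F(y,z)\le z+1\le y$ and hence $F(x,y)\le y+1$.

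The main work, and the step I expect to be the real obstacle, is part $(f)$. Here I would argue by strong induction on $x$, again feeding in the recursive identity $F(x,y)=1+F(y,z)$ and splitting on the value of the residue $z$. The delicate point is that the hypothesis $1\le y\le x-2$ must be spent in exactly the right way in each case. If $z=0$, then $F(x,y)=2$, and $2\le\frac{x+1}{2}$ holds because $y\le x-2$ forces $x\ge 3$. If $z=y-1$ (so $y\ge2$), then $x\equiv 1\pmod y$, and the bound $y\le x-2$ upgrades the quotient to force $x\ge 2y+1$; combined with $F(x,y)=1+F(y,y-1)=1+y$ from part $(d)$, this yields $1+y\le\frac{x+1}{2}$. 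In the remaining range $1\le z\le y-2$, which requires $y\ge 3$, the pair $(y,z)$ again satisfies the hypotheses of the lemma with $y<x$, so the inductive hypothesis gives $F(y,z)\le\frac{y+1}{2}$, whence $F(x,y)=1+F(y,z)\le\frac{y+3}{2}\le\frac{x+1}{2}$, the last inequality being exactly $y\le x-2$ once more. I would then check that these three cases exhaust all residues $z\in\{0,1,\dotsc,y-1\}$ (the cases $y=1,2$ falling under the first two), so the induction closes. The one genuinely nonformal ingredient is recognizing, in the case $z=y-1$, that $y\le x-2$ sharpens $x\equiv 1\pmod y$ from $x\ge y+1$ to $x\ge 2y+1$; everything else is bookkeeping around the recursive identity.
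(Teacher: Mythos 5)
Your proof is correct. Parts $(a)$ through $(e)$ follow the same route as the paper (the paper derives $(a)$ from $-x\equiv-(x+ky)\pmod{y}$, gets $(b)$ from the single step $F(x,x+y)=F(x+y,y)+1$ plus $(a)$, and declares $(c)$, $(d)$, $(e)$ immediate from the definition); your explicit recursive identity $F(x,y)=1+F(y,z)$ just makes the bookkeeping uniform. Part $(f)$, however, is where you genuinely diverge. The paper argues directly: for $y<\frac{x}{2}$ it applies $(e)$, and for $y\geq\frac{x}{2}$ it sets $k=x-y$, writes $x=qk+r$, collapses the computation in two strokes via $(a)$--$(d)$ to $F(k,r)+(q-1)$ (or to $F(q,q-1)=q$ when $r=0$), and finishes with the algebraic inequality $\frac{x}{k}+k-1\leq\frac{x}{2}+1$ for $2\leq k\leq\frac{x}{2}$. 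You instead run a strong induction on $x$ through the one-step recursion, splitting on whether the residue $z$ is $0$, $y-1$, or strictly between, and spending the hypothesis $y\leq x-2$ differently in each case (in particular sharpening $x\equiv1\pmod{y}$ to $x\geq 2y+1$ when $z=y-1$). Both arguments are sound; yours trades the paper's somewhat fiddly closed-form manipulation for a cleaner self-contained induction, at the cost of having to verify that the three residue cases exhaust all possibilities and that the inductive call in the middle case lands back inside the hypotheses of $(f)$ --- which you do correctly.
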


\begin{proof}
Part $(a)$ follows from the fact that $-x\equiv-(x+ky)\pmod y$.
For $(b)$, first consider the case when $k=1$. Note that $-x\equiv y\pmod{x+y}$, so therefore $F(x,x+y)=F(x+y,y)+1$. Using~$(a)$ then gives that $F(x,x+y)=F(x,y)+1$. The general statement of $(b)$ follows by induction. Parts~$(c)$, $(d)$, and $(e)$ are immediate from the definition.

For part $(f)$, if $y < \frac{x}{2}$ we can use $(e)$ to get $F(x,y)\le y+1 < \frac{x}{2}+1$.  On the other hand, if $y\geq \frac{x}{2}$ we set $k=x-y$, noting that $2 \le k \leq \frac{x}{2}$. We then choose integers $q$ and $r$ so that $x=qk+r$ and $0 \le r < k$.  If $r=0$, we use parts $(c)$ and $(d)$ to compute that
\[
F(x,y)=F(qk,(q-1)k)=F(q,q-1)=q = \frac{x}{k} \le \frac{x}{2}.
\]
If $r>0$, we then have $q<\frac{x}{k}$, and so we can use parts $(a)$ and $(b)$ to compute that
\[
F(x,y) = F(qk+r,(q-1)k+r)=F(k,(q-1)k+r) = F(k,r)+(q-1).
\]
Applying $(e)$, we have that $F(x,y) \le q+r<\frac{x}{k}+k-1$. Because $k\leq\frac{x}{2}$, we have that $2k(k-2)\leq x(k-2)$. It follows that $2(x+k(k-1))\leq k(x+2)$, from which we deduce that $\frac{x}{k}+k-1\leq \frac{x}{2}+1$.  Therefore $F(x,y)<\frac{x}{2}+1$ in all cases. The statement follows.
\end{proof}

\begin{theorem}\label{T:length}
Given a pair of integers $a,b\geq2$, write $a+b=tb^2+\epsilon$, where $t$ and $\epsilon$ are integers satisfying $t\geq0$ and $0 \le \epsilon \le b^2-1$. There is a unique smooth arithmetical structure on $D_n$ with $\r_x=a$ and $\r_y=b$ if $n = F(b^2,\epsilon)+t+ \floor*{\frac{ab}{a+b}}$ and no such structure for all other choices of $n$.
\end{theorem}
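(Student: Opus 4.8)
The existence of a unique smooth arithmetical structure for the stated $n$, and the nonexistence of one for any other $n$, are already guaranteed by Proposition~\ref{prop:abuniqueness}; the real content of the theorem is the identification of the value of $n$. The plan is therefore to start from the identity $n=F(\r_0,\r_1)+2$ recorded in the discussion preceding the theorem, rewrite $\r_0$ and $\r_1$ in terms of $a$ and $b$, and then massage $F(\r_0,\r_1)$ into the claimed closed form using the arithmetic of Lemma~\ref{lem:Fbasics}.

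First I would pin down the relevant $\r$-values. Since $\rab$ is normalized so that $\r_x\r_y=\r_0$, we have $\r_0=ab$, and since every smooth structure has $d_0=1$ by Proposition~\ref{prop:centraldis1}, the defining equation at $v_0$ gives $\r_1=\r_0-\r_x-\r_y=ab-a-b$. This is nonnegative, equalling $0$ exactly in the boundary case $a=b=2$, $n=3$, which one checks directly. Thus $n=F(ab,\,ab-a-b)+2$, and the whole problem reduces to evaluating $F(ab,\,ab-a-b)$. Because $ab=(a+b)+(ab-a-b)$, part~$(a)$ of Lemma~\ref{lem:Fbasics} gives $F(ab,\,ab-a-b)=F(a+b,\,ab-a-b)$. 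Writing $q=\floor{ab/(a+b)}$ and $s=ab\bmod(a+b)$, one checks $ab-a-b=(q-1)(a+b)+s$ with $q\geq1$ (as $(a-1)(b-1)\geq1$), so part~$(b)$ yields $F(a+b,\,ab-a-b)=F(a+b,s)+(q-1)$. At this point $n=F(a+b,s)+q+1$, and it remains only to show that $F(a+b,s)=F(b^2,\epsilon)+t-1$.

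The key step, and the one I expect to be the main obstacle, is relating $F(a+b,s)$ to $F(b^2,\,\cdot\,)$. The crucial observation is that $a\equiv-b\pmod{a+b}$, so $s\equiv ab\equiv-b^2\pmod{a+b}$. Setting $w=b^2+(a+b)$, we have $w>a+b$ and $(-w)\bmod(a+b)=s$, so the defining sequence for $F(w,a+b)$ is $w,\,a+b,\,s,\dotsc$; that is, it is the sequence for $F(a+b,s)$ with a single extra leading term $w$, whence $F(w,a+b)=F(a+b,s)+1$. On the other hand, part~$(a)$ gives $F(w,a+b)=F(b^2,a+b)$, and since $a+b=tb^2+\epsilon$, part~$(b)$ gives $F(b^2,a+b)=F(b^2,\epsilon)+t$. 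Combining these yields $F(a+b,s)=F(b^2,\epsilon)+t-1$.

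Substituting this into $n=F(a+b,s)+q+1$ then produces $n=F(b^2,\epsilon)+t+q=F(b^2,\epsilon)+t+\floor{ab/(a+b)}$, as claimed. In writing up the argument I would track the degenerate cases $s=0$ and $ab-a-b=0$ explicitly to confirm that each manipulation remains valid; these cause no difficulty, since $F(x,0)=1$ for $x>0$ and parts~$(a)$ and~$(b)$ hold for all nonnegative second arguments. The only genuinely nonroutine idea is the prepending maneuver in the previous paragraph, where one introduces the auxiliary value $w=b^2+(a+b)$ precisely so that part~$(a)$ can collapse it back to $b^2$ while the sequence-extension accounts for the $-1$.
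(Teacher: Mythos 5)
Your proposal is correct and follows essentially the same route as the paper's proof: reduce to $n=F(ab,ab-a-b)+2$, use Lemma~\ref{lem:Fbasics}$(a)$ and $(b)$ to pass to $F(a+b,s)+q+1$, and then exploit $-b^2\equiv ab\pmod{a+b}$ to identify $F(a+b,s)$ with $F(b^2,a+b)-1=F(b^2,\epsilon)+t-1$. The only cosmetic difference is your auxiliary value $w=b^2+(a+b)$, which the paper avoids by directly reading off the third term of the defining sequence for $F(b^2,a+b)$.
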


\begin{proof}
Proposition~\ref{prop:abuniqueness} states that there is a unique smooth arithmetical structure with $\r_x=a$ and $\r_y=b$ on $D_n$ for one $n$ and no such structure for any other $n$. Since $\r_1=ab-a-b$, this unique smooth arithmetical structure occurs when $n=F(\r_0,\r_1)+2=F(ab,ab-a-b)+2$.

Let $c=a+b$ and $k=
\floor*{\frac{ab}{a+b}}$. Note that our hypotheses imply that $k \ge 1$ and that we can write $ab = kc + d$ for some $0\leq d <c$. Then
\[
F(ab,ab-a-b)=F((ab-c)+c, ab-c)=F(c,ab-c),
\]
by Lemma~\ref{lem:Fbasics}$(a)$. Since $ab = kc + d$, we have $ab-c=(k-1)c + d$, and by Lemma~\ref{lem:Fbasics}$(b)$ we can write
\[
F(c, ab-c) = F(c, (k-1)c + d)= F(c,d)+k-1.
\]
This implies that $n=F(c,d) + k+1$. Now let us compare this to the computation of $F(b^2,c)$. Since $-b^2 \equiv ab\pmod c$ and $ab=kc+d$, we have that $F(b^2,c)=F(c, d)+1$. It therefore follows that $n=F(b^2,c)+k$.

We also have that $F(b^2,c)=F(b^2,tb^2+\epsilon)=F(b^2,\epsilon)+t$ by Lemma~\ref{lem:Fbasics}$(b)$. Therefore $n=F(b^2,\epsilon)+t+k$, as desired.
\end{proof}

We remark that, although the expression for $n$ in the above theorem does not appear to be symmetric in $a$ and $b$, it in fact is. As in the last paragraph of the proof, $F(b^2,\epsilon)+t=F(b^2,a+b)$, and, since $a^2\equiv b^2\pmod{a+b}$, Lemma~\ref{lem:Fbasics}$(a)$ implies that $F(b^2,a+b)=F(a^2,a+b)$.

\begin{corollary}\label{Cor:length}
Let $a,b\geq2$ be integers, and write $a+b=tb^2+\epsilon$, where $t$ and $\epsilon$ are integers satisfying $t\geq0$ and $0 \le \epsilon \le b^2-1$. If $t\geq1$, then the smooth arithmetical structure with $\r_x=a$ and $\r_y=b$ on $D_n$ occurs when $n = F(b^2,\epsilon)+t+b-1$.
\end{corollary}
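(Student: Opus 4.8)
The plan is to observe that Corollary~\ref{Cor:length} is nothing more than the specialization of Theorem~\ref{T:length} to the case $t \geq 1$. Theorem~\ref{T:length} already gives the relevant value as $n = F(b^2,\epsilon)+t+\floor*{\frac{ab}{a+b}}$, so the whole task reduces to showing that the floor term equals $b-1$ whenever $t \geq 1$. Once that identity is in hand, substituting it into the formula of Theorem~\ref{T:length} immediately produces $n = F(b^2,\epsilon)+t+b-1$, and nothing else needs to be re-derived.

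First I would convert the hypothesis $t \geq 1$ into a lower bound on $a$. Since $a+b = tb^2+\epsilon$ with $t \geq 1$ and $\epsilon \geq 0$, we get $a+b \geq b^2$, and hence $a \geq b^2-b = b(b-1)$. This single inequality is the only consequence of the hypothesis that the argument uses.

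Next I would pin down the floor by establishing the two bounds $b-1 \leq \frac{ab}{a+b} < b$. The upper bound holds for every $b\geq 2$: clearing the (positive) denominator, $\frac{ab}{a+b} < b$ is equivalent to $ab < ab+b^2$, i.e.\ to $0 < b^2$. The lower bound $b-1 \leq \frac{ab}{a+b}$ is equivalent, after clearing denominators, to $(b-1)(a+b) \leq ab$, which simplifies to $b(b-1) \leq a$; this is exactly the bound from the previous step. Together the two inequalities force $\floor*{\frac{ab}{a+b}} = b-1$, completing the reduction.

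I do not expect any genuine obstacle here, since the corollary is essentially a bookkeeping simplification of the main theorem. The only points requiring a little care are that all quantities are positive (as $a,b\geq2$), so the direction of the inequalities is preserved when clearing denominators, and the boundary case $a=b(b-1)$ (which occurs precisely when $t=1$ and $\epsilon=0$), where $\frac{ab}{a+b}$ equals $b-1$ exactly; the floor is still $b-1$ there, so the stated formula remains valid.
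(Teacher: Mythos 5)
Your proposal is correct and follows essentially the same route as the paper: both reduce the corollary to showing $\floor*{\frac{ab}{a+b}}=b-1$ via the two inequalities $(a+b)(b-1)\leq ab$ (equivalent to $a\geq b^2-b$, which follows from $t\geq1$) and $ab<(a+b)b$, then invoke Theorem~\ref{T:length}. Your explicit attention to the boundary case $a=b(b-1)$ is a nice touch but adds nothing beyond what the non-strict inequality already handles.
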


\begin{proof}
Note that for any positive integers $a$ and $b$ we have $(a+b)b = ab+b^2 > ab$. Since $t\geq1$, we have $a+b \ge b^2$, and therefore we can compute that $(a+b)(b-1)=ab+b^2-a-b \le ab$. Hence, $b-1\leq\frac{ab}{a+b}< b$ and thus $\floor*{\frac{ab}{a+b}} = b-1$.  The result then follows from Theorem~\ref{T:length}.
\end{proof}

Suppose we are in the situation where $a\geq b^2-b$. In this case, increasing $a$ by $b^2$, which leaves~$\epsilon$ unchanged and increases $t$ by $1$, has the effect of increasing $n$ by exactly $1$. Therefore, for each $b\geq 2$ and $\epsilon$ in the range $0\leq\epsilon\leq b^2-1$, we get exactly one smooth arithmetical structure on $D_n$ for all $n\geq F(b^2,\epsilon)+b$. This leads to the following result.

\begin{theorem}\label{T1}
For any $b\geq 2$ and any $n \ge b^2+b$, there are exactly $b^2$ smooth arithmetical structures on $D_n$ with $\r_y=b$.
\end{theorem}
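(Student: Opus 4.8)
The plan is to parametrize the smooth arithmetical structures on bidents with $\r_y=b$ and then count, for a fixed $n\geq b^2+b$, how many of them live on $D_n$. By Proposition~\ref{prop:abuniqueness}, these structures are in bijection with the choices $\r_x=a$ for $a\geq2$, each such $a$ determining a unique structure on a unique bident. For each $a$, write $a+b=tb^2+\epsilon$ with $t\geq0$ and $0\leq\epsilon\leq b^2-1$ as in Theorem~\ref{T:length}; I would organize the count by the residue $\epsilon$, so that the relevant data becomes the pair $(\epsilon,t)$ rather than $a$ itself.

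First I would dispatch the structures with $t\geq1$, i.e.\ those with $a\geq b^2-b$. This is essentially the content of the paragraph preceding the theorem: for each fixed $\epsilon\in\{0,1,\dotsc,b^2-1\}$, Corollary~\ref{Cor:length} gives $n=F(b^2,\epsilon)+t+b-1$, so as $t$ ranges over $1,2,3,\dotsc$ the value of $n$ takes each integer in $\{F(b^2,\epsilon)+b,\,F(b^2,\epsilon)+b+1,\dotsc\}$ exactly once. Thus each residue $\epsilon$ contributes exactly one structure on $D_n$ whenever $n\geq F(b^2,\epsilon)+b$, and structures with distinct $\epsilon$ are distinct since they have distinct values of $a$. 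To see that all $b^2$ residues are simultaneously ``active'' at the stated threshold, I would invoke Lemma~\ref{lem:Fbasics}$(e)$: since $F(b^2,\epsilon)\leq\epsilon+1\leq b^2$, we have $F(b^2,\epsilon)+b\leq b^2+b$. Hence for every $n\geq b^2+b$ and every $\epsilon$, the inequality $n\geq F(b^2,\epsilon)+b$ holds, producing $b^2$ distinct structures with $\r_y=b$ on $D_n$.

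It remains to rule out any additional structures coming from the finitely many remaining values $2\leq a\leq b^2-b-1$, which are exactly the structures with $t=0$. Here Theorem~\ref{T:length} gives $n=F(b^2,\epsilon)+\floor*{\frac{ab}{a+b}}$ with $\epsilon=a+b\leq b^2-1$. Bounding the two terms separately---$F(b^2,\epsilon)\leq\epsilon+1\leq b^2$ again by Lemma~\ref{lem:Fbasics}$(e)$, and $\floor*{\frac{ab}{a+b}}\leq b-1$ since $\frac{ab}{a+b}<b$---shows that such a structure lives on $D_n$ only for $n\leq b^2+b-1<b^2+b$. Consequently none of the $t=0$ structures occurs on $D_n$ for $n\geq b^2+b$, and the count of $b^2$ from the $t\geq1$ family is exact.

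I expect the only genuinely delicate point to be the bookkeeping for the $t=0$ structures: one must confirm that every one of these finitely many ``short'' structures falls strictly below the threshold $b^2+b$, so that $\max_\epsilon\bigl(F(b^2,\epsilon)+b\bigr)=b^2+b$ is exactly where all $b^2$ residue classes have turned on while no extraneous structures remain. The remaining steps are immediate from Corollary~\ref{Cor:length}, the preceding paragraph, and the elementary estimates in Lemma~\ref{lem:Fbasics}.
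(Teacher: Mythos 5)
Your argument is correct and takes essentially the same route as the paper's proof: both count one structure per residue $\epsilon\in\{0,1,\dotsc,b^2-1\}$ via the observation preceding the theorem that each $\epsilon$ (with the appropriate $t\geq1$) contributes exactly one structure once $n\geq F(b^2,\epsilon)+b$, use $F(b^2,\epsilon)\leq b^2$ to see all $b^2$ residues are active by $n=b^2+b$, and then exclude the $t=0$ structures by bounding the $n$ on which they occur. Your bound $n\leq b^2+b-1$ for the $t=0$ case is exactly what is needed and is in fact stated more carefully than the paper's corresponding claim that such structures force $n<b^2-b$ (which overshoots, e.g.\ $b=3$, $a=5$ gives $\epsilon=8$, $F(9,8)=9$, and $n=10\geq b^2-b=6$, though still $n<b^2+b=12$ as required).
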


\begin{proof}
We note that if $0\leq\epsilon \le b^2-1$ then $F(b^2,\epsilon) \le b^2$ and therefore the above comments show that there will be at least $b^2$ smooth arithmetical structures on $D_n$ for all $n\ge b^2+b$.  Moreover, if $a < b^2-b$ then $t=0$ and $\floor*{\frac{ab}{a+b}} < b-1$, so $F(b^2,\epsilon)+t+ \floor*{\frac{ab}{a+b}}<b^2-b$, meaning none of these structures can occur on $D_n$ with $n\geq b^2-b$. This implies the theorem.
\end{proof}

\subsection{Bounding entries and counting}

Before using Theorem~\ref{T:length} to count smooth arithmetical structures, we first prove that, for a fixed value of $n$, we cannot have $\r_x$ and $\r_y$ both be too large. In this subsection, we take $a=\max\{\r_x,\r_y\}$ and $b=\min\{\r_x,\r_y\}$, where $\rab$ is associated to some smooth arithmetical structure. Therefore we always have $b\leq a$. Notice that any pair $(a,b)$ with $b<a$ gives rise to two smooth arithmetical structures on $D_n$ for some $n$, one with $\r_x=a$ and $\r_y=b$ and another with $\r_x=b$ and $\r_y=a$.

\begin{proposition}\label{thm:2n-4}
Let $n\geq 3$. For every smooth arithmetical structure on $D_n$, we must have $2\leq b\leq 2n-4$.
\end{proposition}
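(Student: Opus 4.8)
The plan is to prove the two bounds separately, handling $n=3$ as a boundary case and running the main argument for $n\geq 4$. For the lower bound $b\geq 2$, I would simply invoke Proposition~\ref{prop:abuniqueness}: every smooth arithmetical structure on a bident arises from a pair of integers $\r_x,\r_y\geq 2$, so $b=\min\{\r_x,\r_y\}\geq 2$. (Equivalently, smoothness forces $\dd_x,\dd_y\geq 2$; since $\dd_x\r_x=\r_0=\r_x\r_y$ in the normalization, one gets $\r_y=\dd_x\geq 2$ and likewise $\r_x=\dd_y\geq 2$.) The case $n=3$ is then immediate, since the unique smooth structure on $D_3$ has $\r_x=\r_y=2$, so $b=2=2n-4$ and both inequalities hold with equality on the right.

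For the upper bound with $n\geq 4$ (so that $\ell=n-3\geq 1$ and the vertex $v_1$ exists), the central idea is a telescoping estimate on the normalized $\r$-values along the tail. Using $\r_x\r_y=\r_0$ together with $\dd_0=1$ (Proposition~\ref{prop:centraldis1}), I would first record that $\r_0=ab$ and $\r_1=\r_0-\r_x-\r_y=ab-a-b$, so the first gap is $\r_0-\r_1=a+b$. By Lemma~\ref{lem:smooth}, the gaps $\r_{i-1}-\r_i$ are positive and non-increasing; hence every gap is at most this first one, namely $a+b$. Summing the telescoping identity $\sum_{i=1}^{\ell}(\r_{i-1}-\r_i)=\r_0-\r_\ell$ and bounding each of the $\ell$ terms by $a+b$ yields $\r_0-\r_\ell\leq \ell(a+b)$, that is, $\ell\geq (\r_0-\r_\ell)/(a+b)$.

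It then remains to convert this into a bound purely in terms of $b$. Here I would use that $\r_\ell=\gcd(\r_0,\r_1)$ (the normalized form of Lemma~\ref{lem:gcd}$(b)$) divides $\r_0-\r_1=a+b$, so $\r_\ell\leq a+b$ and therefore $\r_0-\r_\ell\geq ab-(a+b)$. Combining these gives
\[
\ell \ge \frac{ab-(a+b)}{a+b}=\frac{ab}{a+b}-1\ge \frac{b}{2}-1,
\]
where the final inequality is just $\frac{ab}{a+b}\ge\frac{b}{2}$, which is equivalent to $a\geq b$ and hence holds by our convention $a=\max\{\r_x,\r_y\}$. Consequently $n=\ell+3\geq \frac{b}{2}+2$, which rearranges to $b\leq 2n-4$, as desired.

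The only place where genuine care is needed is the step bounding each gap by the first one: this is exactly where the monotonicity of the gaps in Lemma~\ref{lem:smooth}$(b)$ is indispensable, since without it a single large interior gap could destroy the telescoping estimate. Everything else is routine bookkeeping. I would also remark that, although this bound could in principle be extracted from the formula $n=F(b^2,a+b)+\floor*{\frac{ab}{a+b}}$ appearing in Theorem~\ref{T:length}, doing so requires a lower bound on $F(b^2,a+b)$ that is awkward to obtain directly; the telescoping argument above is both cleaner and fully self-contained.
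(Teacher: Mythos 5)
Your proof is correct, but it takes a genuinely different route from the paper's. The paper proves the upper bound by contradiction using the formula $n=F(b^2,\epsilon)+t+\floor*{\frac{ab}{a+b}}$ from Theorem~\ref{T:length}: it splits into the cases $b\geq 2n-2$, $b=2n-3$ with $a\geq 2n$, and $b=2n-3$ with $2n-3\leq a\leq 2n-1$, the last of which requires parity and gcd arguments to force $F(b^2,a+b)>2$. Your telescoping argument bypasses the function $F$ entirely: the non-increasing gaps from Lemma~\ref{lem:smooth}$(b)$, the identity $\r_0-\r_1=a+b$ coming from $d_0=1$, and the bound $\r_\ell=\gcd(\r_0,\r_1)\leq a+b$ from Lemma~\ref{lem:gcd}$(b)$ combine to give $\ell\geq\frac{ab}{a+b}-1\geq\frac{b}{2}-1$ in one stroke, with no case analysis; it also makes transparent why the bound is attained exactly when $a=b$ and all interior $d_i$ equal $2$. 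What the paper's longer argument buys is reuse of machinery ($F$ and Theorem~\ref{T:length}) that is developed anyway for the enumeration and reappears in Proposition~\ref{prop:upperbound}. One small caveat: Proposition~\ref{prop:abuniqueness} alone does not give the lower bound $b\geq 2$, since it asserts existence of a smooth structure for each pair $a,b\geq 2$ rather than characterizing all smooth structures; but your parenthetical argument ($\r_y=d_x\geq 2$ and $\r_x=d_y\geq 2$ from $d_x\r_x=\r_0=\r_x\r_y$) is exactly right and is essentially what the paper does.
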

\begin{proof}
Let $n\geq 3$ and let $\rab$ be associated to a smooth arithmetical structure on $D_n$. First note that if $b=1$ then $\r_0=ab=a$, in which case $d_x=1$ or $d_y=1$. However the definition of smooth requires $d_x, d_y\geq2$ and thus we must have $b\geq 2$. To show that $b\leq 2n-4$, we proceed by contradiction, showing that $a\geq b>2n-4$ leads to $\floor*{\frac{ab}{a+b}} + t +F(b^2,\epsilon)>n$.

It is straightforward to see that $\frac{ab}{a+b}$ is increasing in both $a$ and $b$.  Thus, if $a\geq b \ge 2n-2$, we have $\frac{ab}{a+b} \geq \frac{(2n-2)^2}{2(2n-2)} = n-1$.  The fact that $F(b^2,\epsilon)+t \ge 2$ then leads to a contradiction.

If $b=2n-3$ and $a \ge 2n$, we can compute that
\[
\frac{ab}{a+b} \ge \frac {(2n-3)(2n)}{4n-3} = \frac{4n^2-6n}{4n-3} = n-1 + \frac{n-3}{4n-3}>n-1,
\]
which similarly leads to a contradiction.

Finally, suppose that $b=2n-3$ and $2n-3 \le a \le 2n-1$. In this case, $\floor*{\frac{ab}{a+b}} = n-2$.  If $a$ is odd, then $a+b$ will be even.  In particular, this means $a+b$ cannot be a divisor of $b^2$ since $b^2$ is odd, so $F(b^2,a+b)>2$. Therefore $\floor*{\frac{ab}{a+b}} + t +F(b^2,\epsilon)>n$,  a contradiction. On the other hand, if $a=2n-2$, we have that $a+b=4n-5$.  Since $\gcd(4n-5,4n-6)=1$, it follows that $4n-5$ cannot be a divisor of $b^2=(2n-3)^2$.  This implies that $F(b^2,a+b)>2$, again leading to a contradiction.
\end{proof}

The preceding results are enough to give a finite procedure for finding all smooth arithmetical structures on $D_n$ for any fixed $n$: For all $b$, $t$, and $\epsilon$ in the ranges $2\leq b\leq 2n-4$, $0\leq \epsilon\leq b^2-1$, and $0\leq t\leq n-2$, check whether $F(b^2,\epsilon)+t+ \floor*{\frac{b(tb^2-b+\epsilon)}{tb^2+\epsilon}}=n$. If equality does hold, we count two smooth arithmetical structures for every such triple $(b,t,\epsilon)$ with $a=tb^2+\epsilon-b>b$ and one smooth arithmetical structure for every such triple $(b,t,\epsilon)$ with $a=tb^2+\epsilon-b=b$. We make some additional observations that make this algorithm more efficient and that will be helpful in establishing bounds on $\abs{\SArith(D_n)}$ in Section~\ref{sec:bounds}.

\begin{lemma}\label{lem:bebound}
Fix $n\geq 3$. For $b$ satisfying $2\leq b\leq 2n-4$ and $\epsilon$ satisfying $0\leq \epsilon\leq b^2-1$, there are at most two smooth arithmetical structures on $D_n$ corresponding to $(b,\epsilon)$.
\end{lemma}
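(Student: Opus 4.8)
The plan is to fix the pair $(b,\epsilon)$ together with $n$ and to show that, among smooth arithmetical structures on $D_n$ whose smaller prong value $\min\{\r_x,\r_y\}$ equals $b$ and whose remainder parameter is $\epsilon$, there is at most one unordered pair $\{a,b\}$ that can occur; the factor of two in the statement will then come entirely from the two orderings of such a pair. Recall that, by the convention of this subsection, every smooth arithmetical structure determines $a=\max\{\r_x,\r_y\}\geq b=\min\{\r_x,\r_y\}$, and that writing $a+b=tb^2+\epsilon$ with $0\leq\epsilon\leq b^2-1$ determines $t=\frac{a+b-\epsilon}{b^2}$ as a nonnegative integer (nonnegative since $a\geq b\geq 2$). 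Thus, once $b$ and $\epsilon$ are fixed, the admissible values of $a\geq b$ are precisely those in the arithmetic progression $a\equiv\epsilon-b\pmod{b^2}$, and each such $a$ corresponds to exactly one value of $t$ via $a=tb^2+\epsilon-b$.

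First I would record that, by Theorem~\ref{T:length}, the structure attached to such an $a$ lives on $D_{n(a)}$, where
\[
n(a)=F(b^2,\epsilon)+t+\floor*{\frac{ab}{a+b}},
\]
and crucially $F(b^2,\epsilon)$ depends only on $b$ and $\epsilon$, not on $a$. The key step is then to show that $n(a)$ is strictly increasing as $a$ runs through the progression $a,a+b^2,a+2b^2,\dotsc$. Passing from $a$ to $a+b^2$ increases $t$ by exactly $1$, while $\floor*{\frac{ab}{a+b}}$ is nondecreasing because $\frac{ab}{a+b}$ is increasing in $a$ (as already noted in the proof of Proposition~\ref{thm:2n-4}). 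Hence each step increases $n(a)$ by at least $1$, so $a\mapsto n(a)$ is injective.

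Injectivity immediately yields the conclusion: for our fixed $n$ there is at most one admissible $a\geq b$ with $n(a)=n$. If no such $a$ exists, there are no structures corresponding to $(b,\epsilon)$ on $D_n$; if one exists, it gives two structures when $a>b$ (namely $\r_x=a,\r_y=b$ and $\r_x=b,\r_y=a$, which share the same $n$ by the symmetry remark following Theorem~\ref{T:length}) and a single structure when $a=b$. In either case there are at most two, as claimed.

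I do not expect a serious obstacle here: the whole argument rests on the monotonicity of $n(a)$, and the only thing one must be careful about is confirming that the $+t$ term strictly dominates any possible stagnation of the floor term, which is immediate since the floor is monotone and $t$ genuinely increases by one at each step. The one bookkeeping point worth stating explicitly is that the two structures counted for a single pair $a>b$ really do lie on the same $D_n$, which is exactly the content of the symmetry remark after Theorem~\ref{T:length}.
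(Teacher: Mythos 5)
Your proposal is correct and follows essentially the same route as the paper: both arguments fix $(b,\epsilon)$, observe that $F(b^2,\epsilon)+t+\floor*{\frac{(tb^2+\epsilon-b)b}{tb^2+\epsilon}}$ is strictly increasing in $t$ (since $t$ increases by $1$ while the floor term is nondecreasing), conclude there is at most one admissible $t$ (equivalently, one $a$), and then count two structures when $a>b$ and one when $a=b$. No gaps.
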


\begin{proof}
It is a straightforward exercise to check that $\frac{(tb^2+\epsilon-b)b}{tb^2+\epsilon}$ is an increasing function of $t$; hence $F(b^2,\epsilon)+t+\floor*{\frac{(tb^2+\epsilon-b)b}{tb^2+\epsilon}}$ is an increasing function of $t$. Therefore, for fixed $n$, $b$, and $\epsilon$, there is at most one value of $t$ for which $F(b^2,\epsilon)+t+\floor*{\frac{(tb^2+\epsilon-b)b}{tb^2+\epsilon}}=n$. A triple $(b,t,\epsilon)$ thus gives rise to two smooth arithmetical structures on $D_n$ if $a=tb^2+\epsilon-b$ is greater than $b$: one with $\r_x=a$ and $\r_y=b$ and the other with $\r_x=b$ and $\r_y=a$. If $a=b$, there is one arithmetical structure with $\r_x=\r_y=b$.
\end{proof}

Recall from the proof of Corollary~\ref{Cor:length} that, when $t\geq1$, we have $\floor*{\frac{(tb^2+\epsilon-b)b}{tb^2+\epsilon}}=b-1$. We thus have the following specific possibilities for fixed $n\geq4$, $b$, and $\epsilon$:

\begin{itemize}
\item $F(b^2,\epsilon)+b-1<n$, in which case we can set $t=n-F(b^2,\epsilon)-b+1\geq1$ and get a pair of smooth arithmetical structures on $D_n$ corresponding to $(b,\epsilon)$;
\item $F(b^2,\epsilon)+b-1\geq n$ (meaning the only possibility is $t=0$) with $F(b^2,\epsilon)+\floor*{\frac{(\epsilon-b)b}{\epsilon}}<n$ or $\epsilon<2b$, in which case there is no smooth arithmetical structure on $D_n$ corresponding to $(b,\epsilon)$;
\item $F(b^2,\epsilon)+b-1\geq n$ with $F(b^2,\epsilon)+\floor*{\frac{(\epsilon-b)b}{\epsilon}}=n$ and $\epsilon>2b$, in which case there are two smooth arithmetical structures on $D_n$ corresponding to $(b,\epsilon)$;
\item $F(b^2,\epsilon)+b-1\geq n$ with $F(b^2,\epsilon)+\floor*{\frac{(\epsilon-b)b}{\epsilon}}=n$ and $\epsilon=2b$, in which case there is one smooth arithmetical structure on $D_n$ corresponding to $(b,\epsilon)$; or
\item $F(b^2,\epsilon)+b-1\geq n$ with $F(b^2,\epsilon)+\floor*{\frac{(\epsilon-b)b}{\epsilon}}>n$ and $\epsilon\geq 2b$, in which case there is no smooth arithmetical structure on $D_n$ corresponding to $(b,\epsilon)$.
\end{itemize}

Therefore we find the number of smooth arithmetical structures on $D_n$ for $n\geq4$ by determining which of the above cases we are in for all values of $b$ and $\epsilon$ in the ranges $2\leq b\leq 2n-4$ and $0\leq\epsilon\leq b^2-1$. We have implemented this algorithm for $n$ in the range $4\leq n\leq 43$; the results are shown in Table~\ref{tab:counts-} and also illustrated in Figure~\ref{fig:cubicgrowth}. We then use Theorem~\ref{thm:smoothtooverall} to find the total number of arithmetical structures on $D_n$; these results also appear in Table~\ref{tab:counts-}. We remark that this algorithm is efficient in practice; the data in Table~\ref{tab:counts-} were generated in less than one minute using \texttt{SageMath}~\cite{sage} on a standard desktop computer.

\begin{table}
\centering
\begin{tabular}{|c|c|c||c|c|c|}
\hline
\multicolumn{1}{|c|}{$n$}& \multicolumn{1}{c|}{$\abs{\SArith(D_n)}$}& \multicolumn{1}{c||}{$\abs{\Arith(D_n)}$}&\multicolumn{1}{c|}{$n$}& \multicolumn{1}{c|}{$\abs{\SArith(D_n)}$}& \multicolumn{1}{c|}{$\abs{\Arith(D_n)}$}\\
\hline
\phantom{1}4&\phantom{2,9}10&\phantom{716,420,218,8}14& 24&\phantom{2}3,806&\phantom{339,028,157,11}2,711,456,910,222\\
\phantom{1}5&\phantom{2,9}16&\phantom{716,420,218,8}46& 25&\phantom{2}3,958&\phantom{339,028,157,1}10,281,958,081,812\\
\phantom{1}6&\phantom{2,9}50&\phantom{716,420,218,}176& 26&\phantom{2}5,022&\phantom{339,028,157,1}39,059,990,775,594\\
\phantom{1}7&\phantom{2,9}52&\phantom{716,420,218,}620& 27&\phantom{2}5,054&\phantom{339,028,157,}148,635,185,291,644\\
\phantom{1}8&\phantom{2,}126&\phantom{716,420,21}2,218& 28&\phantom{2}6,236&\phantom{339,028,157,}566,498,545,019,834\\
\phantom{1}9&\phantom{2,}124&\phantom{716,420,21}7,938& 29&\phantom{2}6,380&\phantom{339,028,15}2,162,330,791,492,290\\
10&\phantom{2,}250&\phantom{716,420,2}28,572& 30&\phantom{2}7,946&\phantom{339,028,15}8,265,205,867,169,156\\
11&\phantom{2,}244&\phantom{716,420,}103,384& 31&\phantom{2}8,106&\phantom{339,028,1}31,634,330,508,005,370\\
12&\phantom{2,}434&\phantom{716,420,}376,056& 32&\phantom{2}9,612&\phantom{339,028,}121,228,606,496,811,950\\
13&\phantom{2,}432&\phantom{716,42}1,374,680& 33&10,060&\phantom{339,028,}465,118,574,235,674,538\\
14&\phantom{2,}690&\phantom{716,42}5,048,348& 34&11,744&\phantom{339,02}1,786,517,442,487,495,664\\
15&\phantom{2,}710&\phantom{716,4}18,618,290& 35&12,104&\phantom{339,02}6,869,273,566,377,014,478\\
16&1,032&\phantom{716,4}68,932,582& 36&14,320&\phantom{339,0}26,439,373,973,414,097,184\\
17&1,066&\phantom{716,}256,133,188& 37&14,736&\phantom{339,}101,860,743,777,136,381,978\\
18&1,552&\phantom{716,}954,856,744& 38&17,006&\phantom{339,}392,787,703,022,696,559,172\\
19&1,576&\phantom{71}3,570,492,960& 39&17,560&\phantom{33}1,515,952,946,666,164,348,660\\
20&2,114&\phantom{7}13,388,550,056& 40&20,050&\phantom{33}5,855,622,326,076,661,242,226\\
21&2,190&\phantom{7}50,334,109,160& 41&20,586&\phantom{3}22,636,211,612,489,393,913,770\\
22&2,874&189,684,561,610& 42&23,824&\phantom{3}87,571,480,303,245,046,251,032\\
23&2,946&716,420,218,810& 43&24,310&339,028,157,112,678,873,881,416\\
\hline
\end{tabular}
\caption{The number of smooth arithmetical structures and the total number of arithmetical structures on $D_n$ for $n$ in the range $4\leq n\leq 43$.\label{tab:counts-}}
\end{table}

We end this section by observing that there appears to be a parity issue in the data in Table~\ref{tab:counts-}. Specifically, for at least $n\leq200$, we have that $\abs{\SArith(D_{n})}-\abs{\SArith(D_{n-1})}$ is larger than $\abs{\SArith(D_{n+1})}-\abs{\SArith(D_{n})}$ when $n$ is even and smaller when $n$ is odd. At this time, we do not have a good explanation of this parity issue, but it appears to be due to smooth arithmetical structures obtained from pairs $(a,b)$ for which $a+b<\min\{a^2, b^2\}$.

\section{Bounds}\label{sec:bounds}
In this section, we show how to bound the number of smooth arithmetical structures on $D_n$ and the total number of arithmetical structures on $D_n$. We first show that $\abs{\SArith(D_n)}$ grows cubically in the sense that it is bounded above and below by cubic functions of $n$.
\begin{theorem}\label{thm:Xnbounds}
Let $\abs{\SArith(D_n)}$ be the number of smooth arithmetical structures on $D_n$. Then
\[
\frac{1}{24}(n^3-3n^2-n-45)\leq\abs{\SArith(D_n)}<\frac{2}{3}n^3-2n^2 +\frac{16}{3}n-6+(2n^2-4n+2)\log(n-3).
\]
\end{theorem}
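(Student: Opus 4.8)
The plan is to translate $\abs{\SArith(D_n)}$ into a count of pairs $(b,\epsilon)$ governed by the function $F$, and then estimate that count. By Theorem~\ref{T:length} together with the case analysis following Lemma~\ref{lem:bebound}, a smooth arithmetical structure whose smaller prong value is $\r_y=b$ and which has $t\ge 1$ satisfies $n=F(b^2,\epsilon)+t+b-1$; such a structure exists exactly when $t=n-F(b^2,\epsilon)-b+1\ge 1$, i.e.\ when $F(b^2,\epsilon)\le n-b$, and it then comes with its $x\leftrightarrow y$ reflection. Writing $G(b^2,m)=\#\{\epsilon:0\le\epsilon\le b^2-1,\ F(b^2,\epsilon)\le m\}$, the first step is to show
\[
\abs{\SArith(D_n)}=2\sum_{b=2}^{n-1}G(b^2,n-b)+E,
\]
where $E$ collects the remaining ``case 3 and case 4'' structures (those with $t=0$, in which both $\r_x$ and $\r_y$ are large), adjusted by the single diagonal term. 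Proposition~\ref{thm:2n-4} bounds the base of every structure by $2n-4$ and so controls the range of $b$ occurring in $E$.

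For the upper bound, the main term is handled crudely. Since there are only $b^2$ admissible residues, $G(b^2,n-b)\le b^2$, and $G(b^2,n-b)=0$ once $b\ge n$ because $F\ge 1$ forces $n-b\ge 1$. Hence $2\sum_{b=2}^{n-1}G(b^2,n-b)\le 2\sum_{b=2}^{n-1}b^2=\tfrac23 n^3-n^2+\tfrac13 n-2$, producing the leading $\tfrac23 n^3$. It remains to bound $E$. Every case-3 or case-4 structure has $\floor*{\frac{ab}{a+b}}<b$ and hence $F(b^2,\epsilon)\ge n-b+1$; by Lemma~\ref{lem:Fbasics}$(f)$ this can occur only for $b$ roughly at least $\sqrt{2n}$, and by Proposition~\ref{thm:2n-4} only for $b\le 2n-4$. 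Counting the admissible $\epsilon$ for each $b$ and each value of $\floor*{\frac{ab}{a+b}}$ leads to a harmonic sum $\sum 1/q$ over the possible first quotients, which is what yields the $(2n^2-4n+2)\log(n-3)$ term.

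For the lower bound I would bound $G(b^2,n-b)$ from below by $b^2$ minus the number of long chains, $G(b^2,n-b)\ge b^2-\#\{\epsilon:F(b^2,\epsilon)>n-b\}$. Controlling this long-chain set for all $b$ up to a fixed multiple of $n$ is the core of the argument: using the relationship between $F$ and the Euclidean algorithm (obtained by iterating Lemma~\ref{lem:Fbasics}$(a)$ and $(b)$ to express $F(b^2,\epsilon)$ as a sum of alternate partial quotients of $b^2/\epsilon$), a long chain forces $\epsilon$ into a thin union of intervals, giving a bound of the shape $\#\{\epsilon:F(b^2,\epsilon)>m\}\le Cb^2/m$. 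Summing $b^2-Cb^2/(n-b)$ over the range of $b$ for which this quantity stays a fixed fraction of $b^2$, then optimizing the cutoff and absorbing lower-order errors, produces the cubic lower bound $\tfrac1{24}(n^3-3n^2-n-45)$.

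The main obstacle is exactly this estimate on $G(b^2,m)$. The trivial bound $G\le b^2$ is enough for the cubic \emph{upper} bound, but both the harmonic correction there and the entire lower bound hinge on understanding how many $\epsilon\in[0,b^2-1]$ have $F(b^2,\epsilon)$ large. The inequalities $F\le\epsilon+1$ and $F\le\tfrac{b^2+1}{2}$ from Lemma~\ref{lem:Fbasics}$(e)$ and $(f)$ account for only $O(n^{3/2})$ of the short chains, so the cubic growth must come from the bulk of residues rather than any explicit sub-family; quantifying this requires the Euclidean-algorithm description of $F$ and is where the real work lies.
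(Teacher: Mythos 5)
Your framework is the right one --- parametrizing smooth structures by $(b,\epsilon)$ via Theorem~\ref{T:length}, splitting off the $t\geq1$ structures, and recognizing that everything comes down to the distribution of $F(b^2,\epsilon)$ --- and your decomposition $\abs{\SArith(D_n)}=2\sum_{b=2}^{n-1}G(b^2,n-b)+E$ is correct. But the proposal has a genuine gap in both halves, and you concede as much in your final paragraph. For the lower bound, everything rests on the estimate $\#\{\epsilon:F(b^2,\epsilon)>m\}\leq Cb^2/m$, which you assert but do not prove; without it there is no derivation of the constant $\tfrac{1}{24}$, and this estimate is precisely ``the real work.'' The paper sidesteps the full distributional question by proving only the much weaker Lemma~\ref{lemma:halfareshort}: for each $b$, at least $(b^2+b)/2$ residues $\epsilon$ satisfy $F(b^2,\epsilon)\leq b+2$. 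The mechanism is a pairing argument: writing $F(b^2,\epsilon)$ and $F(b^2,b^2-\epsilon)$ as (essentially) the sums $S^e_{\q}$ and $S^o_{\q}$ of alternate partial quotients of $b^2/\epsilon$ (Lemma~\ref{lemma:Fintermsofq}), one shows via a continuant inequality (Lemma~\ref{lem:Akbounds} and Proposition~\ref{prop:qeven}) that $S^o_{\q}>b$ and $S^e_{\q}>b$ would force the continuant $A_k$ to exceed $b^2$, contradicting $b^2=\gcd(b^2,\epsilon)A_k$. So at least one of each pair $\{\epsilon,b^2-\epsilon\}$ gives a short chain, and summing $b^2+b$ over $2\leq b\leq\floor{n/2}-1$ already yields $\tfrac{1}{24}n^3+O(n^2)$. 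If you want to pursue your route, you need to actually prove a tail bound on $F$; the weaker half-the-residues statement is both easier and sufficient.

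For the upper bound, the main term $2\sum_{b=2}^{n-1}b^2=\tfrac23 n^3-n^2+\tfrac13 n-2$ is fine (and matches the paper), but your treatment of $E$ --- ``a harmonic sum over the possible first quotients'' --- is not a proof and does not visibly produce the coefficient $(2n^2-4n+2)$ of the logarithm. The paper's route to the log term is different and more elementary: for $b\geq n$ only $t=0$ is possible, and since $t+F(b^2,\epsilon)\geq2$ forces $\floor*{\tfrac{ab}{a+b}}\leq n-2$, one gets $a<\tfrac{b(n-1)}{b-n+1}$, so the number of admissible $a$ for each such $b$ is at most $\tfrac{b(n-1)}{b-n+1}-b-1$; substituting $c=b-n+1$ turns $\sum_{b=n}^{2n-4}\tfrac{b(n-1)}{b-n+1}$ into $(n-1)^2H_{n-3}$ plus lower-order terms, which is where $(2n^2-4n+2)\log(n-3)$ comes from. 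You would need to supply an argument of comparable precision for your version of $E$ (which, note, also contains the $t=0$ structures with $b<n$, not only those with both prong values large); as written, neither the range of $b$ in $E$ nor the count of admissible $\epsilon$ per $b$ is pinned down.
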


\begin{figure}
\centering
\begin{tabular}{cc}
\subcaptionbox{using $4\leq n\leq 50$}{
\resizebox{.465\textwidth}{!}{\includegraphics{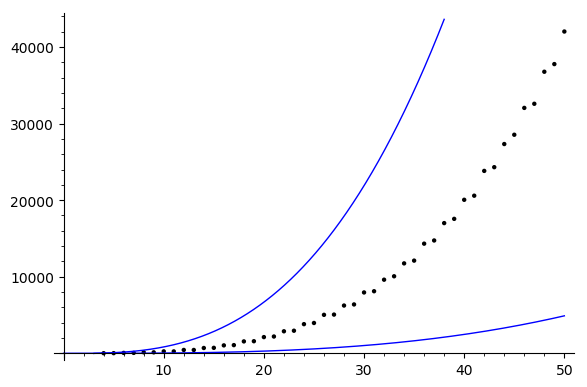}}}
&\subcaptionbox{using $4\leq n\leq 200$}{
\resizebox{.465\textwidth}{!}{\includegraphics{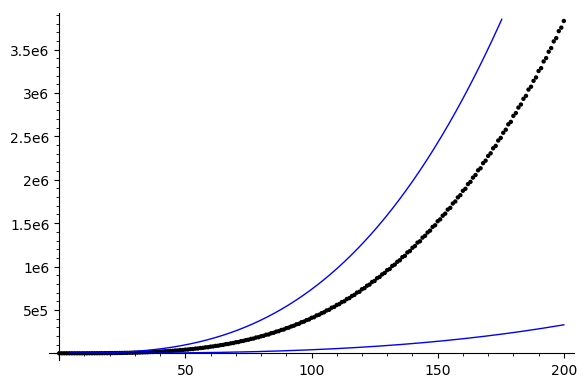}}}
\end{tabular}
\caption{Graphs of $(n,\abs{\SArith(D_n)})$ for $n$ in the range $4\leq n\leq 50$ and in the range $4\leq n\leq 200$ together with the upper and lower bounds given by Theorem~\ref{thm:Xnbounds}.\label{fig:cubicgrowth}\label{fig:sbound}}
\end{figure}

This theorem is illustrated in Figure~\ref{fig:cubicgrowth}. We note that the data suggest $\abs{\SArith(D_n)}$ is well approximated by a cubic polynomial with leading coefficient approximately $0.6$. Therefore we believe the upper bound in Theorem~\ref{thm:Xnbounds} is quite good. On the other hand, the lower bound in this theorem, while of the right order, seems quite far from being optimal.

In Theorem~\ref{Thm:overallcount}, we use the above theorem to show that $\abs{\Arith(D_n)}$ grows at the same rate as the Catalan numbers. As in Section~\ref{sec:reduction}, we work with the vector $\rab=\frac{r_0}{r_xr_y}\mathbf{r}$ and define $a=\max\{\r_x,\r_y\}$ and $b=\min\{\r_x,\r_y\}$.  We again define integers $t$ and $\epsilon$ so that $a+b=tb^2+\epsilon$ with $t\geq 0$ and $0\leq \epsilon \leq b^2-1$.

\subsection{Upper bound on number of smooth arithmetical structures}

We first note that the results of Section~\ref{sec:reduction} immediately give an upper bound on $\abs{\SArith(D_n)}$. Proposition~\ref{thm:2n-4}  shows that $2\leq b\leq 2n-4$, and Lemma~\ref{lem:bebound} shows that, for each $\epsilon$ satisfying $0\leq \epsilon \leq b^2-1$, there are at most two smooth arithmetical structures on $D_n$ corresponding to the pair $(b,\epsilon)$. Therefore there are at most $2b^2$ smooth arithmetical structures on $D_n$ corresponding to a given $b$, and we have that
\[
\abs{\SArith(D_n)}\leq \sum_{b=2}^{2n-4} 2b^2 = \frac{2}{3}(8n^3-42n^2+73n-45).
\]

This bound is not sharp, both because there are sometimes no structures corresponding to a pair $(b,\epsilon)$ and because this bound double counts structures where $\r_x$ and $\r_y$ are both at most $2n-4$. In the following proposition, we improve this bound by treating the cases $b<n$ and $b\geq n$ separately.

\begin{proposition}\label{prop:upperbound}
For all $n\geq 4$, the number of smooth arithmetical structures on $D_n$  is bounded above as follows:
\[
\abs{\SArith(D_n)} < \frac{2}{3}n^3-2n^2 +\frac{16}{3}n-6+(2n^2-4n+2)\log(n-3).
\]
\end{proposition}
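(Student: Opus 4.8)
My plan is to bound $\abs{\SArith(D_n)}$ by splitting the count according to whether $b<n$ or $b\geq n$, using the notation of the surrounding text: $a=\max\{\r_x,\r_y\}$, $b=\min\{\r_x,\r_y\}$, and $a+b=tb^2+\epsilon$ with $t\geq0$ and $0\leq\epsilon\leq b^2-1$. The first step is to observe that the regime $b\geq n$ is forced to have $t=0$: if $t\geq1$, then Corollary~\ref{Cor:length} gives $\floor*{\frac{ab}{a+b}}=b-1$, so by Theorem~\ref{T:length} we would have $n=F(b^2,\epsilon)+t+b-1\geq 1+1+(b-1)=b+1>n$, a contradiction. This separates the problem into two genuinely different counting problems.

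For $b<n$ I would use a crude bound. By Proposition~\ref{thm:2n-4} we have $2\leq b$, and Lemma~\ref{lem:bebound} shows that each pair $(b,\epsilon)$ with $0\leq\epsilon\leq b^2-1$ contributes at most two smooth arithmetical structures on $D_n$. Hence there are at most $2b^2$ structures with $\min\{\r_x,\r_y\}=b$, and summing yields
\[
\sum_{b=2}^{n-1}2b^2=\tfrac23n^3-n^2+\tfrac13n-2,
\]
which supplies the dominant cubic term.

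For $b\geq n$, the reduction $t=0$ gives $\epsilon=a+b\geq 2b$, and a structure on $D_n$ corresponds to a solution of $F(b^2,\epsilon)+\floor*{b-\frac{b^2}{\epsilon}}=n$, counted twice for the interchange of $\r_x$ and $\r_y$. Since $\epsilon\geq 2b\geq1$, the definition of $F$ gives $F(b^2,\epsilon)\geq2$, which forces $\floor*{b-\frac{b^2}{\epsilon}}\leq n-2$ and hence $\epsilon<\frac{b^2}{b-n+1}$. Therefore the number of admissible $\epsilon$ for fixed $b$ is at most $\frac{b^2}{b-n+1}-2b$; substituting $i=b-n+1$ this equals $\frac{(n-1)^2}{i}-i$. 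Summing over $b$ from $n$ to $2n-4$ (equivalently $i$ from $1$ to $n-3$) and doubling bounds the contribution of this regime by
\[
2(n-1)^2\sum_{i=1}^{n-3}\frac1i-(n-3)(n-2).
\]
Adding the two regimes gives $\abs{\SArith(D_n)}\leq\tfrac23n^3-2n^2+\tfrac{16}3n-8+2(n-1)^2\sum_{i=1}^{n-3}\frac1i$, whose polynomial part coincides with that of the target except for the constant.

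The main obstacle is the final passage from the harmonic sum to the logarithm. The interval estimate $\frac{b^2}{b-n+1}-2b$ over-counts, because not every integer $\epsilon$ in the admissible range actually solves the length equation $F(b^2,\epsilon)+\floor*{b-\frac{b^2}{\epsilon}}=n$; indeed, replacing $\sum_{i=1}^{n-3}\frac1i$ directly by $\log(n-3)+1$ loses a term of size $\Theta(n^2)$ and is not sharp enough, since $\sum_{i=1}^{n-3}\frac1i-\log(n-3)\to\gamma>0$. To reach the stated bound one must shave the harmonic coefficient down to exactly $2(n-1)^2$, which I would do by discarding the $\epsilon$ that fail the length equation—for instance by refining the lower bound $F(b^2,\epsilon)\geq2$ (equality holds only when $\epsilon\mid b^2$, which happens for few $\epsilon$) so as to narrow the admissible interval, and then comparing the resulting sum carefully with its integral. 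Once the $b\geq n$ contribution is controlled at this precision, combining the two regimes and simplifying gives
\[
\abs{\SArith(D_n)}<\tfrac23n^3-2n^2+\tfrac{16}3n-6+(2n^2-4n+2)\log(n-3).
\]
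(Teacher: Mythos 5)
Your decomposition and estimates are exactly the paper's: you split at $b=n$, bound the contribution of $2\le b\le n-1$ by $\sum_{b}2b^2$, and for $b\ge n$ use $t+F(b^2,\epsilon)\ge 2$ to confine the admissible parameters to an interval of length about $\frac{(n-1)^2}{b-n+1}-(b-n+1)$, arriving at $\abs{\SArith(D_n)}\le \frac{2}{3}n^3-2n^2+\frac{16}{3}n-8+2(n-1)^2H_{n-3}$. All of this is correct (the paper's version of the same computation, phrased via the count of admissible $a$ rather than admissible $\epsilon$, lands on the constant $-6$ instead of $-8$, an immaterial difference), and your observation that $b\ge n$ forces $t=0$ is a small clean addition the paper leaves implicit.

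The gap is the one you yourself flag and do not close: you never get from $2(n-1)^2H_{n-3}$ to $(2n^2-4n+2)\log(n-3)$. Since $H_{n-3}-\log(n-3)>\gamma>1/2$, the bound $H_m<1+\log m$ leaves an unabsorbed term of size $2(n-1)^2$ that cannot be hidden in the constant, and your proposed repair (upgrading $F(b^2,\epsilon)\ge 2$ to $\ge 3$ off the sparse set where $\epsilon\mid b^2$, then shrinking the interval and comparing with an integral) is plausible but entirely unexecuted, so the proposal does not establish the inequality as stated. You should know, however, that the paper's own proof performs precisely this step: its final displayed inequality replaces $2(n-1)^2H_{n-3}$ by $(2n^2-4n+2)\log(n-3)$ with no room to spare, which as written requires the false inequality $H_{n-3}<\log(n-3)$. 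So the obstacle you identified is a genuine defect of the published argument rather than a failure of your approach. If all one needs is a cubic upper bound --- which is what Theorem~\ref{thm:Xnbounds} and Theorem~\ref{Thm:overallcount} actually use --- then the honest endpoint of the common computation, namely $\abs{\SArith(D_n)}<\frac{2}{3}n^3+\frac{4}{3}n-4+(2n^2-4n+2)\log(n-3)$, already suffices after adjusting downstream constants; obtaining the bound exactly as stated in Proposition~\ref{prop:upperbound} would require carrying out a sharpening along the lines you sketch.
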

\begin{proof}

For each $b$ in the range $2\leq b\leq n-1$, we use the same approach as above to say that there are at most $2b^2$ smooth arithmetical structures. If $a=b$, it is straightforward to see that $n= \ceil*{\frac{b}{2}} +2$, so there are exactly two such smooth arithmetical structures for each $n$.  To bound the number of possibilities for $b$ satisfying $n\leq b\leq 2n-4$ and with $a>b$, we proceed as follows.

Recall that in Theorem~\ref{T:length} we let $\floor*{\frac{ab}{a+b}} + t + F(b^2,\epsilon)=n$. Note that $t+F(b^2,\epsilon) \ge 2$ since otherwise we would have $t=\epsilon=0$ which would imply $a+b=0$.  Therefore $a<\frac{b(n-1)}{b-n+1}$. With the restriction $a>b$, there are thus at most $\ceil*{\frac{b(n-1)}{b-n+1}}-b-1$ such values of $a$ corresponding to~$b$. Each such pair $(b,a)$ gives two smooth arithmetical structures on $D_n$, so there are therefore at most $2\left(\ceil*{\frac{b(n-1)}{b-n+1}}-b-1\right)$ smooth arithmetical structures corresponding to $b$ in this case.

We thus obtain the following upper bound:
\begin{align*}
\abs{\SArith(D_n)} &\leq  \sum_{b=2}^{n-1} 2b^2 + 2 + \sum_{b=n}^{2n-4} 2\left(\ceil*{\frac{b(n-1)}{b-n+1}} -b - 1\right) \\
& \leq  \frac{1}{3}(2 n^3 - 3 n^2 + n - 6) + 2 - (3n^2-13n+12) + 2 \sum_{b=n}^{2n-4} \frac{b(n-1)}{b-n+1}\\
& =  \frac{2}{3}n^3 -4n^2+\frac{40}{3}n-12 + 2 \sum_{c=1}^{n-3}\frac{(c+n-1)(n-1)}{c}\\
& =  \frac{2}{3}n^3 -4n^2+\frac{40}{3}n-12 + 2 \sum_{c=1}^{n-3} (n-1) + 2 \sum_{c=1}^{n-3}\frac{(n-1)^2}{c}\\
& =  \frac{2}{3}n^3 -4n^2+\frac{40}{3}n-12 + 2 (n^2-4n+3) + 2(n-1)^2 H_{n-3}\\
& <  \frac{2}{3}n^3-2n^2 +\frac{16}{3}n-6+(2n^2-4n+2)\log(n-3).
\end{align*}
Note that the above computation makes use of the substitution $c=b-n+1$ as well as the fact that the partial sums $H_n$ of the harmonic series satisfy the bound $H_n <1+\log(n)$.
\end{proof}

\subsection{Lower bound on number of smooth arithmetical structures}
We now turn to the lower bound of Theorem~\ref{thm:Xnbounds}. The general strategy is to show that there are sufficiently many values of $b$ and $\epsilon$ for which $F(b^2,\epsilon)+b\leq n$. For such values of $b$ and $\epsilon$, we can set $t=n-F(b^2,\epsilon)-b+1\geq1$ in accordance with Corollary~\ref{Cor:length}, thus obtaining a pair of smooth arithmetical structures on $D_n$ corresponding to $(b,\epsilon)$. With this aim, we first study $F(\beta,\epsilon)$ for an arbitrary pair $(\beta,\epsilon)$. We will later set $\beta=b^2$ when applying the results to the above setting.

We begin by establishing a connection between $F(\beta,\epsilon)$ and certain quotients that appear in the Euclidean algorithm. Let $\beta,\epsilon\in \mathbb{Z}_{>0}$ with $\epsilon<\beta$, and denote by $\q_{\beta,\epsilon}=(q_1, q_2,\dotsc, q_k)$ the vector consisting of the quotients that appear in the Euclidean algorithm when performed on $(\beta,\epsilon)$. Specifically, we have
\begin{align}
\beta       & = q_1\epsilon+r_1 \nonumber                 \\
\epsilon       & = q_2r_1+r_2 \nonumber               \\
r_1     & = q_3r_2+r_3 \nonumber \\
&\vdotswithin{=}\label{eq:gcdEuclidean}\\
r_{k-3} & = q_{k-1}r_{k-2}+r_{k-1}\nonumber    \\
r_{k-2} & = q_kr_{k-1}+0.\nonumber
\end{align}

As an example, suppose $\beta=36$ and $\epsilon=23$. The Euclidean algorithm then gives
\[
36=1\cdot 23+13, \qquad 23=1\cdot 13+10, \qquad 13=1\cdot 10+3, \qquad 10=3\cdot 3+1, \qquad 3=3\cdot 1+0.
\]
We thus have $\q_{36,23}=(q_1,q_2,q_3,q_4,q_5)=(1,1,1,3,3)$. Lemma~\ref{lemma:Fintermsofq} will establish that $F(36,23)=q_2+q_4+2=6$ and $F(36,36-23)=F(36,13)=q_1+q_3+q_5=5$. Indeed, these results are true in this example; the sequence $(36,23,10,7,4,1)$ shows that $F(36,23)=6$, and the sequence $(36,13,3,2,1)$ shows that $F(36,13)=5$. We first prove the following general lemma.

\begin{lemma}\label{lem:FEuclidean}
Suppose $x=q_1y+r_1$ and $y=q_2r_1+r_2$, where $x$, $y$, and $r_1$ are positive integers and $q_1$, $q_2$, and $r_2$ are nonnegative integers. Then $F(x,y)=q_2+F(r_1,r_2)$.
\end{lemma}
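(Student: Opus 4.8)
The plan is to peel off the two given equations one at a time, using only the functional identities for $F$ collected in Lemma~\ref{lem:Fbasics}; no direct manipulation of the defining sequence $\{x_i\}$ should be needed.

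First I would use the relation $x=q_1y+r_1$. Since $r_1$ is a positive integer while $y$ and $q_1$ are nonnegative integers, Lemma~\ref{lem:Fbasics}$(a)$ applies with these roles and gives
\[
F(r_1,y)=F(r_1+q_1y,y)=F(x,y),
\]
so that $F(x,y)=F(r_1,y)$. In effect this strips the leading quotient $q_1$ off the first argument, replacing $x$ by its residue $r_1$ modulo $y$ without changing the value of $F$.

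Next I would use the relation $y=q_2r_1+r_2$. Here I apply Lemma~\ref{lem:Fbasics}$(b)$ with first argument $r_1$ (positive), parameter $q_2$ (nonnegative), and remaining term $r_2$ (nonnegative), obtaining
\[
F(r_1,y)=F(r_1,q_2r_1+r_2)=F(r_1,r_2)+q_2.
\]
Combining the two displays yields $F(x,y)=q_2+F(r_1,r_2)$, which is exactly the claim.

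The only point to verify is that the hypotheses of parts $(a)$ and $(b)$ are met at each application, namely that the first argument remains a positive integer and the quotient parameters are nonnegative; these follow immediately from the positivity of $x$, $y$, $r_1$ and the nonnegativity of $q_1$, $q_2$, $r_2$ assumed in the statement. I therefore expect no real obstacle: the lemma is essentially a packaging of one ``mod'' reduction (via $(a)$) followed by one ``add a multiple'' computation (via $(b)$), with all the content residing in Lemma~\ref{lem:Fbasics}. I anticipate that this identity will then serve as the inductive step for expressing $F(\beta,\epsilon)$ as an alternating sum of the Euclidean quotients $q_i$, which appears to be its role in what follows.
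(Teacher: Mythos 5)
Your proposal is correct and follows exactly the same route as the paper's proof: one application of Lemma~\ref{lem:Fbasics}$(a)$ to replace $x=q_1y+r_1$ by $r_1$ in the first argument, followed by one application of Lemma~\ref{lem:Fbasics}$(b)$ to extract the quotient $q_2$ from $y=q_2r_1+r_2$. The hypothesis checks you note (positivity of $r_1$, nonnegativity of $q_1$, $q_2$, $r_2$) are exactly what is needed and match the paper's implicit use of those parts.
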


\begin{proof}
Using parts $(a)$ and $(b)$ of Lemma~\ref{lem:Fbasics}, we have that
\[
F(x,y)=F(q_1y+r_1,y)=F(r_1,y)=F(r_1,q_2r_1+r_2)=q_2+F(r_1,r_2).\qedhere
\]
\end{proof}

Before stating Lemma~\ref{lemma:Fintermsofq}, we define the following notation. For any vector $\q=(q_1, q_2, \dotsc, q_k)$, let
\[
S_{\q}^o = \sum_{\substack{i \text{ odd}\\1\leq i \leq k}} q_i \qquad \text{ and } \qquad S_{\q}^e = \sum_{\substack{i \text{ even}\\1\leq i \leq k}} q_i.
\]

\begin{lemma}\label{lemma:Fintermsofq}
Let $\beta,\epsilon\in \mathbb{Z}_{>0}$ with $\epsilon<\beta$, $\q=\q_{\beta,\epsilon}$, and $k=\abs{\q}$. Then
\[
F(\beta,\epsilon)=\begin{cases}
\displaystyle S_{\q}^e+1 & \text{ if $k$ is even} \\
\displaystyle S_{\q}^e+2 & \text{ if $k$ is odd,}  \end{cases}
\]
and
\[
F(\beta,\beta-\epsilon)=\begin{cases}
\displaystyle S_{\q}^o+1 & \text{ if $k$ is even} \\
\displaystyle S_{\q}^o & \text{ if $k$ is odd.}  \end{cases}
\]
\end{lemma}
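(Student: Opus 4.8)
The plan is to prove both formulas simultaneously by induction on $k = \abs{\q}$, using Lemma~\ref{lem:FEuclidean} as the key recursive tool. The lemma tells us that one step of the Euclidean algorithm (stripping off $q_1$ and $q_2$) relates $F(\beta,\epsilon)$ to $F(r_1,r_2)$ by the formula $F(\beta,\epsilon) = q_2 + F(r_1,r_2)$. Since the quotient vector of $(r_1,r_2)$ is exactly $\q_{r_1,r_2} = (q_3,q_4,\dotsc,q_k)$, which has length $k-2$, this immediately suggests reducing the length-$k$ case to the length-$(k-2)$ case. The parity of $k$ is preserved under this reduction, which matches the case split in the statement.

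First I would handle the base cases. For $F(\beta,\epsilon)$, the odd base case is $k=1$: here $\beta = q_1\epsilon$, so $\epsilon \mid \beta$, meaning the sequence $\{x_i\}$ starting at $(\beta,\epsilon)$ terminates immediately with $x_2 = \epsilon$ dividing $x_1 = \beta$, giving $F(\beta,\epsilon) = 2$; since $S_{\q}^e = 0$ when $k=1$, this matches $S_{\q}^e + 2$. The even base case is $k=2$: here $\beta = q_1\epsilon + r_1$ and $\epsilon = q_2 r_1$, so by Lemma~\ref{lem:FEuclidean} (with $r_2 = 0$) we get $F(\beta,\epsilon) = q_2 + F(r_1,0) = q_2 + 1$ using $F(x,0)=1$ from the definition of $F$; since $S_{\q}^e = q_2$, this matches $S_{\q}^e + 1$.

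For the inductive step on the first formula, I would apply Lemma~\ref{lem:FEuclidean} to write $F(\beta,\epsilon) = q_2 + F(r_1,r_2)$, then invoke the inductive hypothesis on $F(r_1,r_2)$ whose quotient vector $(q_3,\dotsc,q_k)$ has length $k-2$ and the same parity as $k$. The even-sum of the shortened vector is $S_{\q}^e - q_2$ (since the even-indexed entries of $(q_3,q_4,\dotsc)$ are precisely $q_4, q_6,\dotsc$, the original even entries minus $q_2$), so adding back $q_2$ recovers $S_{\q}^e$, and the constant ($+1$ or $+2$) carries through unchanged because parity is preserved. For the second formula, the key observation is the identity $F(\beta,\beta-\epsilon) = F(\epsilon, r_1) + q_1$: writing $\beta = q_1\epsilon + r_1$, we have $\beta - \epsilon = (q_1-1)\epsilon + r_1$, and Lemma~\ref{lem:Fbasics}$(b)$ gives $F(\beta, \beta-\epsilon) = F(\beta, (q_1-1)\epsilon + r_1)$. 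I would use Lemma~\ref{lem:Fbasics}$(a)$ to reduce $F(\beta, \cdot)$ to $F(\epsilon, \cdot)$ modulo $\epsilon$, obtaining $F(\epsilon, (q_1-1)\epsilon + r_1) = F(\epsilon, r_1) + (q_1-1)$ by Lemma~\ref{lem:Fbasics}$(b)$ again, and carefully track the extra $q_1$. Then $F(\epsilon, r_1)$ is handled by the \emph{first} formula applied to the quotient vector $(q_2,\dotsc,q_k)$, converting its even-sum into the desired odd-sum $S_{\q}^o$.

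The main obstacle I anticipate is the bookkeeping around parity and the off-by-one constants, particularly in deriving the second formula. The danger is that shifting the quotient vector by one index swaps the roles of odd and even sums and shifts the parity of the length, so the two cases ($k$ even versus $k$ odd) in the second formula must be matched correctly against the two cases of the first formula after the index shift. I would verify this alignment explicitly against the worked example $(\beta,\epsilon)=(36,23)$ given in the excerpt, where $\q = (1,1,1,3,3)$ has $k=5$ odd, $S_{\q}^e = 1+3 = 4$ giving $F(36,23) = 4+2 = 6$, and $S_{\q}^o = 1+1+3 = 5$ giving $F(36,13) = 5$, both of which agree with the stated values. This sanity check would confirm that the constants and the odd/even correspondence are set up correctly before committing to the general argument.
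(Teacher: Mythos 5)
Your handling of the first formula is essentially the paper's proof: the paper applies Lemma~\ref{lem:FEuclidean} repeatedly, peeling off two quotients at a time until it reaches $F(r_{k-1},0)=1$ ($k$ even) or $F(r_{k-2},r_{k-1})=2$ ($k$ odd), which is exactly your two-step induction made informal, with the same base cases. For the second formula the routes genuinely differ: the paper shifts the quotient vector \emph{up}, noting $\q_{\beta+\epsilon,\beta}=(1,q_1,\dotsc,q_k)$ and $F(\beta,\beta-\epsilon)=F(\beta+\epsilon,\beta)-1$, so that $S^o_{\q}$ becomes the even sum of the longer vector and the first formula applies; you shift \emph{down}, stripping $q_1$ to land on $\q_{\epsilon,r_1}=(q_2,\dotsc,q_k)$. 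Both are one-line reductions to the first formula and cost the same, so this is a matter of taste. One correction, though: your headline identity $F(\beta,\beta-\epsilon)=F(\epsilon,r_1)+q_1$ is off by one. The derivation you actually sketch gives $F(\beta,\beta-\epsilon)=F(\epsilon,\beta-\epsilon)=F(\epsilon,(q_1-1)\epsilon+r_1)=F(\epsilon,r_1)+(q_1-1)$, and it is this version that, combined with $S^e_{\q'}=S^o_{\q}-q_1$ for $\q'=(q_2,\dotsc,q_k)$ and the parity flip of the length, produces $S^o_{\q}+1$ and $S^o_{\q}$ in the two cases; the $+q_1$ version would yield $S^o_{\q}+2$ and $S^o_{\q}+1$, contradicting your own check at $(36,23)$. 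Two smaller points: the step $F(\beta,\beta-\epsilon)=F(\beta,(q_1-1)\epsilon+r_1)$ is mere substitution, not Lemma~\ref{lem:Fbasics}$(b)$ --- the real uses are part $(a)$ to replace $\beta=\epsilon+(\beta-\epsilon)$ by $\epsilon$ in the first slot and then part $(b)$ to pull out $q_1-1$ --- and the degenerate case $k=1$ (where $r_1=0$) needs a separate word, though $F(\epsilon,0)=1$ makes it come out to $q_1=S^o_{\q}$ as required.
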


\begin{proof}
First consider $F(\beta,\epsilon)$. If $k$ is even, we apply Lemma~\ref{lem:FEuclidean} as often as possible to get that $F(\beta,\epsilon)=S_{\q}^e+F(r_{k-1},0)=S_{\q}^e+1$. If $k$ is odd, we apply Lemma~\ref{lem:FEuclidean} as often as possible to get $F(\beta,\epsilon)=S_{\q}^e+F(r_{k-2},r_{k-1})=S_{\q}^e+2$.

Now let $\q'=\q_{\beta+\epsilon,\beta}$. Observe that $\q'=(1,q_1,q_2,\dotsc,q_k)$. If $k$ is even, $F(\beta,\beta-\epsilon)=F(\beta+\epsilon,\beta)-1=S_{\q'}^e+F(r_{k-2},r_{k-1})-1=S_{\q}^o+1$. If $k$ is odd, $F(\beta,\beta-\epsilon)=F(\beta+\epsilon,\beta)-1=S_{\q'}^e+F(r_{k-1},0)-1=S_{\q}^o$.
\end{proof}

We next make some observations about the sums $S_{\q}^o$ and $S_{\q}^e$ that appear in the above lemma. To do this, we define
\[
M^{(k)}_q=\prod_{i=1}^{k} \begin{pmatrix} q_i&1 \\ 1&0 \end{pmatrix},
\]
and then define $A_k$ to be the upper left entry of $M_q^{(k)}$, i.e.\ $A_k\coloneqq(M^{(k)}_\q)_{1,1}$. The key properties of the $A_k$ are that $A_1=q_1$, $A_2=q_1q_2+1$, and $A_k= q_k(A_{k-1})+A_{k-2}$ for all $k\geq2$.

\begin{lemma}\label{lem:Akbounds}
Let $\q^k=(q_1,q_2,\dotsc,q_k)\in \mathbb{Z}_{>0}^k$ for $k\geq2$. Then
\begin{enumerate}[ $(a)$]
\item if $k$ is even, $A_k\geq S_{\q^{k-1}}^o\cdot S_{\q^k}^e+1$, and
\item if $k$ is odd, $A_k\geq S_{\q^{k}}^o+q_k(S_{\q^{k-2}}^o\cdot S_{\q^{k-1}}^e)$.
\end{enumerate}
\end{lemma}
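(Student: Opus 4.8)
The plan is to prove both inequalities simultaneously by strong induction on $k$, exploiting the continuant recursion $A_k=q_kA_{k-1}+A_{k-2}$ together with the elementary way the partial sums $S^o$ and $S^e$ change when one appends an entry. To streamline the bookkeeping I would abbreviate $O_m=S_{\q^m}^o$ and $E_m=S_{\q^m}^e$ and record the update rules: appending an even-indexed entry $q_m$ leaves the odd sum unchanged and adds $q_m$ to the even sum, so $O_m=O_{m-1}$ and $E_m=E_{m-1}+q_m$, while appending an odd-indexed entry does the reverse. In particular, for even $k$ one has $E_k=E_{k-2}+q_k$ and $O_{k-1}=O_{k-3}+q_{k-1}$, and for odd $k$ one has $O_k=O_{k-2}+q_k$ and $E_{k-1}=E_{k-2}$; these identities are exactly what convert the induction hypotheses into the desired bounds.

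First I would record the auxiliary estimate $A_m\ge\sum_{i=1}^m q_i$ for all $m\ge1$, an easy induction using $A_{m-1}\ge1$ and $A_{m-2}\ge1$ together with $q_m\ge1$, which immediately gives $A_m\ge O_m$. This cheap bound is all that is needed to close the odd step. Next I would check the base cases directly: for $k=2$ the claim $A_2\ge O_1E_2+1$ reads $q_1q_2+1\ge q_1q_2+1$, and for $k=3$ the claim $A_3\ge O_3+q_3O_1E_2$ reads $q_1q_2q_3+q_1+q_3\ge q_1q_2q_3+q_1+q_3$; both hold with equality, using the explicit values $A_2=q_1q_2+1$ and $A_3=q_3A_2+q_1$.

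For the induction step with $k\ge4$ I would split on the parity of $k$. When $k$ is odd, substituting the even hypothesis $A_{k-1}\ge O_{k-2}E_{k-1}+1$ into $A_k=q_kA_{k-1}+A_{k-2}$ gives $A_k\ge q_kO_{k-2}E_{k-1}+q_k+A_{k-2}$; since the target equals $O_k+q_kO_{k-2}E_{k-1}=O_{k-2}+q_k+q_kO_{k-2}E_{k-1}$, the inequality reduces to $A_{k-2}\ge O_{k-2}$, which is the auxiliary estimate. When $k$ is even, I would feed in both hypotheses at $k-1$ (odd) and $k-2$ (even), namely $A_{k-1}\ge O_{k-1}+q_{k-1}O_{k-3}E_{k-2}$ and $A_{k-2}\ge O_{k-3}E_{k-2}+1$. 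After using $O_{k-1}E_k=O_{k-1}E_{k-2}+q_kO_{k-1}$ and $O_{k-1}=O_{k-3}+q_{k-1}$, the difference between the lower bound for $A_k$ and the target $O_{k-1}E_k+1$ collapses to $E_{k-2}\,q_{k-1}\,(q_kO_{k-3}-1)$, which is nonnegative because $E_{k-2}\ge q_2\ge1$, $q_{k-1}\ge1$, and $q_kO_{k-3}\ge q_1\ge1$ for $k\ge4$. Since every step for $k$ invokes only the claims at $k-1$ and $k-2$, both at least $2$, the induction is well-founded.

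The genuinely delicate part is not any single estimate but the index bookkeeping: keeping straight which of $O_{k-1},O_{k-2},O_{k-3}$ and $E_{k-2},E_{k-1},E_k$ appears, and correctly matching the even and odd hypotheses to the two terms $q_kA_{k-1}$ and $A_{k-2}$ of the recursion. I expect the main obstacle to be verifying that, after these substitutions, the even case really does reduce to the clean factor $E_{k-2}\,q_{k-1}\,(q_kO_{k-3}-1)$; once that cancellation is carried out the positivity is immediate, and the odd case is comparatively trivial.
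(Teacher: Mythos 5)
Your proof is correct and takes essentially the same route as the paper's: induction on $k$ with base cases $k=2,3$ (where equality holds), feeding the parity-appropriate inductive hypotheses into the recursion $A_k=q_kA_{k-1}+A_{k-2}$, and your factored remainder $E_{k-2}\,q_{k-1}\,(q_kO_{k-3}-1)\geq 0$ in the even step is precisely the paper's inequality $q_kq_{k-1}\bigl(S_{\q^{k-3}}^o\cdot S_{\q^{k-2}}^e\bigr)\geq q_{k-1}S_{\q^{k-2}}^e$ in disguise. The only cosmetic difference is in the odd step, where you invoke the elementary bound $A_{k-2}\geq S_{\q^{k-2}}^o$ (via $A_m\geq\sum_i q_i$) instead of the paper's use of the odd inductive hypothesis at $k-2$ followed by discarding the nonnegative term $q_{k-2}\bigl(S_{\q^{k-4}}^o\cdot S_{\q^{k-3}}^e\bigr)$; both yield the same estimate.
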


\begin{proof}
We proceed by induction with $k=2$ and $k=3$ as base cases. When $k=2$, we have that
\[
S_{\q^1}^o \cdot S_{\q^2}^e+1=q_1q_2+1=A_2,
\]
so the statement holds. When $k=3$, we have that
\[
S_{\q^{3}}^o+q_3(S_{\q^{1}}^o\cdot S_{\q^{2}}^e) = q_1+q_3+q_3q_1q_2=A_3,
\]
so the statement holds.

Now assume $k\geq4$. Suppose the lemma is satisfied for all $i\in\{2,3,\dotsc,k-1\}$.  Recall that $A_k= q_k(A_{k-1})+A_{k-2}$.

For even $k$, we have that
\begin{align*}
A_k&=q_k(A_{k-1})+A_{k-2}\\
&\geq q_k(S_{\q^{k-1}}^o+q_{k-1}(S_{\q^{k-3}}^o\cdot S_{\q^{k-2}}^e))+S_{\q^{k-3}}^o\cdot S_{\q^{k-2}}^e+1\\
&=q_kS_{\q^{k-1}}^o+q_kq_{k-1}(S_{\q^{k-3}}^o\cdot S_{\q^{k-2}}^e)+S_{\q^{k-3}}^o\cdot S_{\q^{k-2}}^e+1\\
&\geq q_k S_{\q^{k-1}}^o+q_{k-1}S_{\q^{k-2}}^e +S_{\q^{k-3}}^o\cdot S_{\q^{k-2}}^e+1\\
&=q_k S_{\q^{k-1}}^o + S_{\q^{k-1}}^o\cdot S_{\q^{k-2}}^e+1\\
&=S_{\q^{k-1}}^o\cdot S_{\q^{k}}^e+1.
\end{align*}

For odd $k$, we have that
\begin{align*}
A_k&=q_k(A_{k-1})+A_{k-2}\\
&\geq q_k(S_{\q^{k-2}}^o\cdot S_{\q^{k-1}}^e+1) + S_{\q^{k-2}}^o+q_{k-2}(S_{\q^{k-4}}^o\cdot S_{\q^{k-3}}^e)\\
&> q_k(S_{\q^{k-2}}^o\cdot S_{\q^{k-1}}^e) +q_k + S_{\q^{k-2}}^o\\
&= S_{\q^{k}}^o+q_k(S_{\q^{k-2}}^o\cdot S_{\q^{k-1}}^e).\qedhere
\end{align*}
\end{proof}

We use Lemma~\ref{lem:Akbounds} to establish the following proposition.

\begin{proposition}\label{prop:qeven}
Let $\q^k=(q_1,q_2,\dotsc,q_k)\in \mathbb{Z}_{>0}^k$, and let $b\geq 2$. If both $S_{\q^k}^o>b$ and $S_{\q^k}^e>b$, then $A_k>b^2$.
\end{proposition}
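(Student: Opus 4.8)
The plan is to split on the parity of $k$ and apply Lemma~\ref{lem:Akbounds} in each case, after which everything reduces to the elementary fact that a product of two quantities each exceeding $b$ exceeds $b^2$. First I would observe that the hypotheses $S_{\q^k}^o>b$ and $S_{\q^k}^e>b$ with $b\geq2$ force both parity-sums to be nonzero, so $\q^k$ has at least one odd-indexed and one even-indexed entry; in particular $k\geq2$, so all the partial sums appearing in Lemma~\ref{lem:Akbounds} make sense. Moreover, since the $q_i$ are positive integers, $S_{\q^k}^o$ and $S_{\q^k}^e$ are integers, so the hypotheses sharpen to $S_{\q^k}^o\geq b+1$ and $S_{\q^k}^e\geq b+1$.

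For even $k$, the bookkeeping is immediate: since $k$ is even, the odd indices at most $k$ coincide with the odd indices at most $k-1$, so $S_{\q^{k-1}}^o=S_{\q^k}^o$. Lemma~\ref{lem:Akbounds}$(a)$ then reads $A_k\geq S_{\q^k}^o\cdot S_{\q^k}^e+1$, and as each factor exceeds $b$ I get $A_k>b^2+1>b^2$ with no further work.

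The odd case is where the real work lies, and I expect it to be the main obstacle. Here Lemma~\ref{lem:Akbounds}$(b)$ gives $A_k\geq S_{\q^k}^o+q_k\bigl(S_{\q^{k-2}}^o\cdot S_{\q^{k-1}}^e\bigr)$, and two index identities clean it up: because $k$ is odd, the even indices at most $k$ coincide with those at most $k-1$, so $S_{\q^{k-1}}^e=S_{\q^k}^e$; and the odd indices at most $k$ are those at most $k-2$ together with $k$ itself, so $S_{\q^{k-2}}^o=S_{\q^k}^o-q_k$. (Note that $k\geq3$ here, so $S_{\q^{k-2}}^o\geq q_1\geq1$, and in particular $q_k<S_{\q^k}^o$.) The crux is then a lower bound on the product $q_k\bigl(S_{\q^k}^o-q_k\bigr)$: its two factors are positive integers summing to the fixed value $S_{\q^k}^o$, and for positive integers with a fixed sum the product is smallest at the most lopsided split, giving $q_k\bigl(S_{\q^k}^o-q_k\bigr)\geq S_{\q^k}^o-1\geq b$. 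Combining this with $S_{\q^k}^e>b$ yields $q_k\bigl(S_{\q^{k-2}}^o\cdot S_{\q^{k-1}}^e\bigr)\geq b\cdot S_{\q^k}^e>b^2$, and discarding the nonnegative term $S_{\q^k}^o$ leaves $A_k>b^2$, as desired.
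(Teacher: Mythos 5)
Your proof is correct and follows essentially the same route as the paper: split on the parity of $k$, apply Lemma~\ref{lem:Akbounds}, and in the odd case use the elementary fact that a product of two positive integers is minimized, for a given sum, at the most lopsided split. The only cosmetic difference is in the final accounting for odd $k$ (the paper keeps the term $S_{\q^k}^o>b$ and bounds the product term by $(b-1)b$, whereas you discard $S_{\q^k}^o$ and bound the product term by $b\cdot S_{\q^k}^e>b^2$), which changes nothing of substance.
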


\begin{proof}
If $k=0$ or $k=1$, then $S_{\q^k}^e=0$, so the statement is vacuously true. For even $k$ at least $2$, we first note that $S_{\q^k}^o=S_{\q^{k-1}}^o$. Then, using Lemma~\ref{lem:Akbounds}, we have that $A_k\geq S_{\q^{k-1}}^o\cdot S_{\q^k}^e+1>b^2+1$. For odd $k$ at least $3$, we first note that $S_{\q^k}^e=S_{\q^{k-1}}^e$. We have that $q_k+S_{\q^{k-2}}^o=S_{\q^{k}}^o>b$, and it is a simple exercise to show that if the sum of two positive integers is greater than $b$ then the product of these integers is greater than $b-1$. Therefore $q_kS_{\q^{k-2}}^o>b-1$. Then, using Lemma~\ref{lem:Akbounds}, we have that
\[
A_k\geq S_{\q^{k}}^o+q_k(S_{\q^{k-2}}^o\cdot S_{\q^{k-1}}^e)>b+(b-1)b=b^2.\qedhere
\]
\end{proof}

It is well established~\cite{Koshy2002} that for any pair $(\beta,\epsilon)$, where $g=\gcd(\beta,\epsilon)$, $\q=\q_{\beta,\epsilon}$ and $k=\abs{\q}$, we have that
\[
\begin{bmatrix}\beta\\ \epsilon\end{bmatrix} = M^{(k)}_\q \begin{bmatrix}g\\ 0\end{bmatrix}.
\]
In other words, $\beta$ can be written in terms of $g$ and the elements of $\q$ as $\beta=gA_k$.

We are now prepared to prove a result about $F(b^2,\epsilon)$ using Lemma~\ref{lemma:Fintermsofq} and Proposition~\ref{prop:qeven}.

\begin{lemma}\label{lemma:halfareshort}
For each $b\geq2$, there are at least $(b^2+b)/2$ values of $\epsilon\in\{0,1,\dotsc,b^2-1\}$ for which $F(b^2,\epsilon)\leq b+2$.
\end{lemma}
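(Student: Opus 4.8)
The plan is to split the set of possible values $\epsilon\in\{0,1,\dotsc,b^2-1\}$ according to divisibility by $b$: I will handle the multiples of $b$ directly and the non-multiples by means of the involution $\epsilon\mapsto b^2-\epsilon$. Throughout, call a value $\epsilon$ \emph{short} if $F(b^2,\epsilon)\leq b+2$; the goal is to produce at least $(b^2+b)/2$ short values.

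First I would dispose of the multiples of $b$. Writing $\epsilon=kb$ with $0\leq k\leq b-1$, Lemma~\ref{lem:Fbasics}$(c)$ gives $F(b^2,kb)=F(b,k)$, and then Lemma~\ref{lem:Fbasics}$(e)$ gives $F(b,k)\leq k+1\leq b$. Hence every one of the $b$ multiples of $b$ in the range (including $\epsilon=0$) satisfies $F(b^2,\epsilon)\leq b\leq b+2$, so all $b$ of them are short.

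Next I would set up the pairing for non-multiples. Fix $\epsilon$ with $b\nmid\epsilon$, so $1\leq\epsilon\leq b^2-1$, let $\q=\q_{b^2,\epsilon}$ and $k=\abs{\q}$, and recall that $b^2=gA_k$ with $g=\gcd(b^2,\epsilon)\geq1$, whence $A_k\leq b^2$. The contrapositive of Proposition~\ref{prop:qeven} then forces $S_{\q}^e\leq b$ or $S_{\q}^o\leq b$. In the first case Lemma~\ref{lemma:Fintermsofq} gives $F(b^2,\epsilon)\leq S_{\q}^e+2\leq b+2$, so $\epsilon$ is short; in the second case it gives $F(b^2,b^2-\epsilon)\leq S_{\q}^o+1\leq b+1$, so $b^2-\epsilon$ is short. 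Since $b\nmid\epsilon$ implies $b\nmid(b^2-\epsilon)$ and forces $\epsilon\neq b^2-\epsilon$, the involution $\epsilon\mapsto b^2-\epsilon$ partitions the $b^2-b$ non-multiples into exactly $(b^2-b)/2$ genuine pairs, and the argument above shows each pair contains at least one short non-multiple. Because these short non-multiples are disjoint from the $b$ short multiples, I would conclude that there are at least $b+(b^2-b)/2=(b^2+b)/2$ short values, as claimed.

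The main obstacle is the counting, not the number theory: a naive pairing of all of $\{1,\dotsc,b^2-1\}$ yields only one short value per pair, hence roughly $(b^2+1)/2$ short values, which falls short of the target by about $(b-1)/2$. The crucial observation that closes this gap is that multiples of $b$ are not merely automatically short but are also \emph{closed} under the involution (since $\sigma$ sends $kb$ to $(b-k)b$), so they pair among themselves and each such pair contributes \emph{both} of its elements rather than one; isolating the multiples of $b$ before pairing is precisely what supplies the extra short values. The technical points to verify carefully are that Proposition~\ref{prop:qeven} genuinely applies here, namely that all Euclidean quotients $q_i$ are positive (which holds because $0<\epsilon<b^2$) and that the bound $A_k\leq b^2$ is legitimate (which follows from $b^2=gA_k$ with $g\geq1$).
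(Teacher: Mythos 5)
Your proposal is correct and follows essentially the same route as the paper: handle the $b$ multiples of $b$ via $F(b^2,kb)=F(b,k)\leq b$, then pair the remaining $\epsilon$ with $b^2-\epsilon$ and use Lemma~\ref{lemma:Fintermsofq} together with Proposition~\ref{prop:qeven} and the identity $b^2=\gcd(b^2,\epsilon)A_k$ to show each pair contains a short value. The only cosmetic difference is that you argue by contrapositive where the paper argues by contradiction, and you spell out the disjointness and fixed-point-freeness of the pairing slightly more explicitly.
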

\begin{proof}
We first note that if $\epsilon=kb$ for some $k\in\{0,1,\dotsc,b-1\}$ then $F(b^2,\epsilon)=F(b^2,kb)=F(b,k)\leq b$. This gives $b$ values of $\epsilon$ for which $F(b^2,\epsilon)\leq b$.

For the remaining $b^2-b$ values of $\epsilon$, we will show that either $F(b^2,\epsilon)\leq b+2$ or $F(b^2,b^2-\epsilon)\leq b+2$. Let $\q=\q_{b^2,\epsilon}=(q_1,q_2,\dotsc, q_k)$ as defined in~\eqref{eq:gcdEuclidean}. Assume for sake of contradiction that $F(b^2,\epsilon)>b+2$ and $F(b^2,b^2-\epsilon)>b+2$. Lemma~\ref{lemma:Fintermsofq} then gives that $S_{\q}^e>b$ and $S_{\q}^o>b$, and Proposition~\ref{prop:qeven} thus yields $A_k>b^2$. However, we observed above that $b^2=\gcd(b^2,\epsilon) A_k$, so we thus have $b^2>b^2$, a contradiction. Therefore either $F(b^2,\epsilon)$ or $F(b^2,b^2-\epsilon)$ must be less than $b+2$, meaning that at least half of the values of $\epsilon$ in this case satisfy $F(b^2,\epsilon)\leq b+2$.

Therefore at least $b+(b^2-b)/2=(b^2+b)/2$ values of $\epsilon$ satisfy $F(b^2,\epsilon)\leq b+2$.
\end{proof}

Experimentally the number of values of $\epsilon$ for which $F(b^2,\epsilon) \le b+2$ is greater than the roughly $50\%$ guaranteed by this lemma. In particular, for all $b \le 200$ at least $83\%$ of choices of $\epsilon$ satisfy this condition; the portion is at least $90\%$ for $26 \le b \le 200$ and at least $95\%$ for $72 \le b \le 200$. Nevertheless, the result we are able to prove in Lemma~\ref{lemma:halfareshort} is enough to establish the following cubic lower bound on the number of smooth arithmetical structures on $D_n$.

\begin{proposition}\label{prop:lowerbound}
For all $n\geq 4$, the number of smooth arithmetical structures on $D_n$ is bounded below as follows:
\[
\frac{1}{24}(n^3-3n^2-n-45)\leq \abs{\SArith(D_n)}.
\]
\end{proposition}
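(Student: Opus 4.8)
The plan is to exhibit an explicit family of smooth arithmetical structures indexed by pairs $(b,\epsilon)$ and then count it. As recorded in the discussion preceding this proposition, whenever $F(b^2,\epsilon)+b\leq n$ we may set $t=n-F(b^2,\epsilon)-b+1\geq1$ and obtain, via Corollary~\ref{Cor:length}, a pair of smooth arithmetical structures on $D_n$ with $\r_y=b$: writing $a=tb^2+\epsilon-b$, one has $\r_x=a$ and the other has $\r_x=b$. Thus the whole problem reduces to counting pairs $(b,\epsilon)$ with $0\leq\epsilon\leq b^2-1$ satisfying $F(b^2,\epsilon)+b\leq n$, and Lemma~\ref{lemma:halfareshort} is precisely the tool that guarantees many such pairs.

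Concretely, I would first fix $b$ in the range $2\leq b\leq\floor*{\frac{n-2}{2}}$. For such $b$ we have $2b+2\leq n$, so any $\epsilon$ with $F(b^2,\epsilon)\leq b+2$ automatically satisfies $F(b^2,\epsilon)+b\leq 2b+2\leq n$. Lemma~\ref{lemma:halfareshort} supplies at least $(b^2+b)/2$ such values of $\epsilon$, and each yields a pair of smooth arithmetical structures on $D_n$, contributing $2\cdot\frac{b^2+b}{2}=b^2+b$ structures.

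Next I would verify that all of these structures are genuinely distinct. A smooth arithmetical structure is determined by $(\r_x,\r_y)$ by Proposition~\ref{prop:abuniqueness}, and those we build satisfy $\min\{\r_x,\r_y\}=b$ and $\max\{\r_x,\r_y\}=a$, so structures from different $b$ are distinct; for fixed $b$, the representation $a+b=tb^2+\epsilon$ with $0\leq\epsilon<b^2$ is unique, so distinct $\epsilon$ force distinct $a$. The two structures in a single pair are distinct provided $a>b$: since $t\geq1$ and $b\geq2$ give $a=tb^2+\epsilon-b\geq b^2-b\geq b$, with equality only when $t=1$, $\epsilon=0$, $b=2$, which would require $n=F(4,0)+2=3$; hence for $n\geq4$ we always have $a>b$ and each pair contributes two distinct structures. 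Summing over $b$ yields
\[
\abs{\SArith(D_n)}\geq\sum_{b=2}^{\floor*{\frac{n-2}{2}}}(b^2+b).
\]

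Finally I would evaluate the sum. Setting $m=\floor*{\frac{n-2}{2}}$ and using $\sum_{b=1}^m(b^2+b)=\frac{m(m+1)(m+2)}{3}$, the right-hand side equals $\frac{m(m+1)(m+2)}{3}-2$. Since $m\geq\frac{n-3}{2}$ and $\frac{m(m+1)(m+2)}{3}$ is increasing in $m$, substituting $m=\frac{n-3}{2}$ gives
\[
\abs{\SArith(D_n)}\geq\frac{(n-3)(n-1)(n+1)}{24}-2=\frac{1}{24}(n^3-3n^2-n-45),
\]
as desired. The substantive content has already been carried by Lemma~\ref{lemma:halfareshort}; what remains is essentially bookkeeping, and the main obstacle I anticipate is making the distinctness argument airtight together with handling the floor so that the estimate lands exactly on the stated cubic. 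The degenerate possibility $a=b$ is the only genuinely delicate point, and it is ruled out for $n\geq4$ precisely as above.
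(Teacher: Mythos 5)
Your proposal is correct and follows essentially the same route as the paper's proof: restrict to $b$ with $2\leq b\leq\lfloor n/2\rfloor-1$ (your $\lfloor(n-2)/2\rfloor$ is the same range), invoke Lemma~\ref{lemma:halfareshort} to get at least $(b^2+b)/2$ admissible values of $\epsilon$, use Corollary~\ref{Cor:length} with $t=n-F(b^2,\epsilon)-b+1\geq1$ to produce two structures per pair, and sum $b^2+b$ over $b$. Your treatment is, if anything, slightly more careful than the paper's on the distinctness of the constructed structures and on folding the small cases $n\in\{4,5\}$ into the general computation, but the decomposition, the key lemma, and the final algebra are the same.
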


\begin{proof}
The result is automatically true for $n\leq 5$ since the lower bound is nonpositive; we prove it under the assumption $n\geq 6$. Consider values of $b$ for which $2 \le b \leq \floor{n/2}-1$. By Lemma~\ref{lemma:halfareshort}, there are at least $(b^2+b)/2$ values of $\epsilon$ with $F(b^2,\epsilon)\leq b+2$. For these values of $\epsilon$, we have that
\[
F(b^2,\epsilon)+b\leq \floor{n/2}-1+2+\floor{n/2}-1\leq n.
\]
Therefore we can set $t=n-F(b^2,\epsilon)-(b-1)$ and have $t\geq 1$. We then set $a=tb^2+\epsilon-b$, noting this ensures that $a \ge b^2-b$ and $a>b$. (The case $a=b=2$ yields $n=3$, which we are not considering here.) Corollary~\ref{Cor:length} then provides two smooth arithmetical structures on $D_n$: one with $\r_x=a$ and $\r_y=b$ and the other with $\r_x=b$ and $\r_y=a$. For each choice of $b$ satisfying $2\leq b\leq \floor{n/2}-1$, we therefore have at least $b^2+b$ smooth arithmetical structures on $D_n$. We can thus compute that
\begin{align*}
\abs{\SArith(D_n)} &\geq \sum_{b=2}^{\floor{n/2}-1} (b^2+ b)\\
&= \frac{\floor{n/2}(\floor{n/2}-1)}{2} + \frac{\floor{n/2}(\floor{n/2}-1)(2\floor{n/2}-1)}{6} -2\\
&= \frac{(\floor{n/2})^3-\floor{n/2}}{3}-2.
\end{align*}

Regardless of whether $\floor{n/2}=(n-1)/2 $ or $\floor{n/2}=n/2 $, this implies the bound in the proposition.
\end{proof}

Propositions~\ref{prop:upperbound} and~\ref{prop:lowerbound} together yield Theorem~\ref{thm:Xnbounds}.

We note that all of the smooth arithmetical structures counted in Proposition~\ref{prop:lowerbound} have $t\geq1$ and $2\leq b\leq\floor{n/2}-1$, whereas there are also smooth arithmetical structures with $t=0$ and/or with $\floor{n/2}\leq b\leq 2n-4$. In fact, experimental data shows that, for each $n$ in the range $4\leq n\leq 200$, the proportion of smooth arithmetical structures on $D_n$ that satisfy $t\geq1$ and $2\leq b\leq\floor{n/2}-1$ is less than $1/4$. Therefore the lower bound in Theorem~\ref{thm:Xnbounds} is not close to being optimal, though it is of the right order, as both the upper and lower bounds in Theorem~\ref{thm:Xnbounds} are cubic in $n$.

\subsection{Bounds on total number of arithmetical structures}
We now use Theorem~\ref{thm:Xnbounds} to obtain upper and lower bounds on the number of arithmetical structures on $D_n$.

\begin{theorem}\label{Thm:overallcount}
For $n\geq 4$, we have that
\[
2 C(n-2)+ C(n-3) \leq \abs{\Arith(D_n)} < 2 C(n-2)+ 702  C(n-3).
\]
\end{theorem}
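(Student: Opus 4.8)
The plan is to start from Theorem~\ref{thm:smoothtooverall}, which writes $\abs{\Arith(D_n)}=2C(n-2)+\Sigma_n$ with $\Sigma_n=\sum_{m=4}^{n}B(n-3,n-m)\abs{\SArith(D_m)}$, so that the theorem reduces to the two-sided estimate $C(n-3)\le\Sigma_n<702\,C(n-3)$. The lower bound is immediate: every summand is nonnegative, so $\Sigma_n$ is at least its $m=4$ term $B(n-3,n-4)\abs{\SArith(D_4)}$. The ballot recurrence gives $B(n-3,n-3)=B(n-3,n-4)+B(n-4,n-3)=B(n-3,n-4)$, since $B(n-4,n-3)=0$, and $B(n-3,n-3)=C(n-3)$ as noted in the proof of Theorem~\ref{thm:smoothtooverall}; hence $B(n-3,n-4)=C(n-3)$. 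As $\abs{\SArith(D_4)}=10\ge1$, this yields $\Sigma_n\ge C(n-3)$.

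The substance lies in the upper bound, and the decisive step is to show that the ballot numbers in $\Sigma_n$ decay geometrically relative to $C(n-3)$ as $m$ decreases. Writing out the definitions of the ballot and Catalan numbers gives
\[
\frac{B(n-3,n-m)}{C(n-3)}=(m-2)\,\frac{\binom{2n-m-3}{n-3}}{\binom{2n-6}{n-3}}=(m-2)\prod_{i=0}^{m-4}\frac{(n-3)-i}{2(n-3)-i}.
\]
Each factor satisfies $\frac{(n-3)-i}{2(n-3)-i}\le\frac12$ (equivalent to $i\ge0$), and the product has $m-3$ factors, so we obtain the crucial estimate
\[
B(n-3,n-m)\le\frac{m-2}{2^{m-3}}\,C(n-3).
\]
This converts the bound on $\abs{\Arith(D_n)}$ into $C(n-3)$ times a series whose weights $\frac{m-2}{2^{m-3}}$ decay geometrically in $m$.

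Combining this estimate with the cubic upper bound $\abs{\SArith(D_m)}<U(m)$ of Proposition~\ref{prop:upperbound}, where $U(m)=\frac23 m^3-2m^2+\frac{16}{3}m-6+2(m-1)^2\log(m-3)$, and then enlarging the positive-term sum to an infinite one, I would bound
\[
\Sigma_n<C(n-3)\sum_{m=4}^{n}\frac{m-2}{2^{m-3}}\,U(m)\le C(n-3)\sum_{m=4}^{\infty}\frac{m-2}{2^{m-3}}\,U(m).
\]
It then remains to verify that the numerical constant $\sum_{m=4}^{\infty}\frac{m-2}{2^{m-3}}U(m)$ is at most $702$. Splitting $U(m)$ into its polynomial part and its logarithmic part $2(m-1)^2\log(m-3)$, the polynomial-weighted sums reduce to finite linear combinations of the standard evaluations $\sum_{m\ge0}m^k2^{-m}$ and so have closed form, while the logarithmic part is a convergent series that can be dominated from above (for instance by evaluating finitely many initial terms and bounding the geometrically small tail). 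It is essential here to use the sharp leading coefficient $\frac23$ supplied by Proposition~\ref{prop:upperbound}, rather than the cruder cubic bound with leading coefficient $\frac{16}{3}$, as only the former keeps the constant below $702$.

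I expect the main obstacle to be this final numerical certification: the geometric weights guarantee convergence to an explicit constant, but the polynomial sums and the logarithmic tail must be estimated carefully enough to land below $702$. Conceptually, though, the heart of the matter is the geometric ballot estimate, which is precisely what forces $\abs{\Arith(D_n)}$ to grow at the same rate as $C(n-2)$: since $C(n-3)/C(n-2)\to\frac14$, the bound reads $\abs{\Arith(D_n)}=\Theta(C(n-2))$, with the remaining steps amounting to careful but routine bookkeeping.
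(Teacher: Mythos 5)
Your proposal is correct, and both halves take a genuinely different route from the paper's proof. For the lower bound, the paper substitutes the cubic bound of Proposition~\ref{prop:lowerbound} into the sum from Theorem~\ref{thm:smoothtooverall}, evaluates the result in closed form by computer algebra, and then checks $4\leq n\leq 8$ separately; your observation that the single summand $m=4$ already contributes $B(n-3,n-4)\abs{\SArith(D_4)}=10\,C(n-3)$ is simpler, uniform in $n$, and in fact yields the stronger constant $10$. For the upper bound, the paper first discards the logarithm via $\log(m-3)<\frac{m-3}{2}$ to obtain the polynomial $P(m)=\frac{5}{3}m^3-7m^2+\frac{37}{3}m-9$, evaluates $\sum_{m}B(n-3,n-m)P(m)$ exactly by computer algebra, and shows the resulting coefficient of $C(n-3)$ increases to $702$. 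Your pointwise estimate $B(n-3,n-m)\leq\frac{m-2}{2^{m-3}}C(n-3)$ is correct (the product formula checks out, each factor is at most $\frac{1}{2}$, and there are $m-3$ of them), and it replaces the exact summation by a single convergent numerical series. That series does land under $702$: with the logarithmic term retained it comes to approximately $561$, and as a consistency check your weights applied to the paper's polynomial give exactly $\frac{5}{3}\cdot 761-7\cdot 107+\frac{37}{3}\cdot 17-9\cdot 3=702$ --- unsurprisingly, since $\frac{m-2}{2^{m-3}}$ is precisely $\lim_{n\to\infty}B(n-3,n-m)/C(n-3)$, which is where the paper's constant $702$ comes from. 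In particular, you could even use the paper's polynomial $P(m)$ and still conclude a strict inequality (strictness of $\abs{\SArith(D_m)}<P(m)$ together with the discarded positive tail), so keeping the logarithm buys a comfortable margin but is not strictly necessary; what is essential is only that the leading coefficient not exceed $\frac{5}{3}$, and the bound with leading coefficient $\frac{16}{3}$ that you mention is the preliminary estimate at the start of Section~\ref{sec:bounds}, not the one the paper actually uses here. In sum, your approach trades the computer-algebra summation identities for a routine numerical estimate of an infinite series; both arguments succeed.
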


\begin{proof}
Recall from Theorem~\ref{thm:smoothtooverall} that the number of arithmetical structures on $D_n$ is given by
\[
\abs{\Arith(D_n)} = 2C(n-2)+\sum_{m=4}^nB(n-3,n-m)\abs{\SArith(D_m)},
\]
where $C(n)$ is the $n$-th Catalan number and $B(n,k) = \frac{n-k+1}{n+1}\binom{n+k}{n}$.

We first establish an upper bound on $\abs{\Arith(D_n)}$. Since $\log(n-3)<\frac{n-3}{2}$, Proposition~\ref{prop:upperbound} implies that
\[
\abs{\SArith(D_m)}< \frac{5}{3}m^3-7 m^2+\frac{37}{3}m-9.
\]
We therefore have that
\begin{align*}
\abs{\Arith(D_n)}-2C(n-2) &<\sum_{m=4}^{n}B(n-3,n-m)\left(\frac{5}{3}m^3-7 m^2+\frac{37}{3}m-9\right)\\
& =\frac{12 (n-3) \left(117 n^3-509 n^2+804 n-460\right) }{(n-2) (n-1) n (n+1) (n+2)}\binom{2 n-7}{n-3}\\
& =\frac{6(n-3)(117 n^3-509 n^2+804 n-460)}{(n-1)n(n+1)(n+2)}C(n-3)\\
& \leq 702 C(n-3).
\end{align*}
Here the equality on the second line follows from standard combinatorial identities as verified by a computer algebra system.  The last inequality follows by showing that the coefficient of $C(n-3)$ is an increasing function of $n$ for $n\geq3$ whose limit is $702$.

We now establish a lower bound on $\abs{\Arith(D_n)}$. From Proposition~\ref{prop:lowerbound}, we have that
\[
\abs{\SArith(D_n)}\geq \frac{1}{24}(n^3-3n^2-n-45).
\]
Therefore it follows that
\begin{align*}
\abs{\Arith(D_n)}-2C(n-2) &\geq \frac{1}{24}\sum_{m=4}^{n}B(n-3,n-m)(m^3-3m^2-m-45)\\
& =\frac{1}{24}\sum_{m=4}^{n}\frac{m-2}{n-2}\binom{2n-m-3}{n-3}(m^3-3m^2-m-45)\\
& = \frac{3 (n - 3) (8 n^3 - 68 n^2 + 69 n - 30)}{2(n-1)n(n+1)(n+2)}C(n-3)\\
&\geq C(n-3) \text{ when $n\geq 9$.}
\end{align*}
Here the equality on the third line follows from standard combinatorial identities as verified by a computer algebra system. The last inequality follows by showing that the coefficient of $C(n-3)$ is an increasing function of $n$ for $n\geq9$ and is greater than $1$ when $n=9$. One can also check directly that $\abs{\Arith(D_n)}-2C(n-2)\geq C(n-3)$ when $4\leq n\leq 8$. The theorem thus follows.
\end{proof}

Since $C(n-3)\leq C(n-2)$, Theorem~\ref{Thm:overallcount} implies that $2 C(n-2) \leq \abs{\Arith(D_n)}\leq  704 C(n-2)$. Thus $\abs{\Arith(D_n)}$ has the same growth rate as the Catalan numbers.

\section{Critical groups}\label{sec:critical}

We next investigate critical groups of arithmetical structures on bidents. We first show that all such critical groups are cyclic. Consequently, the problem of understanding critical groups of arithmetical structures on $D_n$ reduces to that of understanding the orders of these groups. We then completely characterize the groups that occur as critical groups of arithmetical structures on $D_n$.

Before discussing our results, we present some experimental data.
Table~\ref{table:dist} gives the number of arithmetical structures on $D_n$ whose critical group has order $m$ for $n$ in the range $4\leq n\leq 12$. Notice that most rows of the table have gaps, i.e.\ for fixed $n$ there are positive integers $m<M$ for which there is no arithmetical structure on $D_n$ with critical group of order $m$ but there is an arithmetical structure on $D_n$ with critical group of order $M$. There are no such gaps in the columns of the table since, as we will see, any descendant of an arithmetical structure under subdivision has isomorphic critical group.

\begin{table}
\centering
\resizebox{\textwidth}{!}{
\begin{tabular}{|c||c|c|c|c|c|c|c|c|c|c|c|c|c|c|c|c|c|c|c|}
\hline
\diagbox{$n$}{$m$} & 1     & 2    & 3    & 4    & 5    & 6   & 7   & 8  & 9  & 10 & 11 & 12 & 13 & 14 & 15 &16&17&18&19 \\
\hline
\phantom{1}4   & \phantom{232,0}10     & \phantom{65,53}3    & \phantom{39,92}1    & \phantom{15,31}0    & \phantom{15,43}0    & \phantom{1,83}0   & \phantom{4,85}0   & \phantom{1,12}0  & \phantom{1,14}0  & \phantom{9}0  & \phantom{53}0  & 0  & \phantom{15}0  & 0  & \phantom{4}0 & 0  & 0  & 0  & 0 \\ \hline
\phantom{1}5   & \phantom{232,0}32    & \phantom{65,53}8   & \phantom{39,92}5    & \phantom{15,31}0    & \phantom{15,43}1    & \phantom{1,83}0   & \phantom{1,83}0   & \phantom{1,12}0  & \phantom{1,14}0  & \phantom{9}0  & \phantom{53}0  & 0  & \phantom{15}0  & 0  & \phantom{4}0 & 0  & 0  & 0  & 0 \\ \hline
\phantom{1}6   & \phantom{232,}116   & \phantom{65,5}31   & \phantom{39,9}18   & \phantom{15,31}5    & \phantom{15,43}5    & \phantom{1,83}0   & \phantom{1,83}1   & \phantom{1,12}0  & \phantom{1,14}0  & \phantom{9}0  & \phantom{53}0  & 0  & \phantom{15}0  & 0  & \phantom{4}0  & 0  & 0  & 0  & 0\\ \hline
\phantom{1}7   & \phantom{232,}400   & \phantom{65,}108  & \phantom{39,9}65   & \phantom{15,3}22   & \phantom{15,4}20   & \phantom{1,83}0   & \phantom{1,83}4   & \phantom{1,12}0  & \phantom{1,14}1  & \phantom{9}0  & \phantom{53}0  & 0  & \phantom{15}0  & 0  & \phantom{4}0 & 0  & 0  & 0  & 0 \\ \hline
\phantom{1}8   & \phantom{23}1,406  & \phantom{65,}384  & \phantom{39,}236  & \phantom{15,3}84   & \phantom{15,4}79   & \phantom{1,83}3   & \phantom{1,8}18  & \phantom{1,12}2  & \phantom{1,14}5  & \phantom{9}0  & \phantom{53}1  & 0  & \phantom{15}0  & 0  & \phantom{4}0 & 0  & 0  & 0  & 0 \\ \hline
\phantom{1}9   & \phantom{23}4,980  & \phantom{6}1,366 & \phantom{39,}848  & \phantom{15,}308  & \phantom{15,}300  & \phantom{1,8}20  & \phantom{1,8}77  & \phantom{1,1}12 & \phantom{1,1}20 & \phantom{9}0  & \phantom{53}6  & 0  & \phantom{15}1  & 0  & \phantom{4}0 & 0  & 0  & 0  & 0 \\ \hline
10  & \phantom{2}17,794 & \phantom{6}4,885 & \phantom{3}3,050 & \phantom{1}1,131 & \phantom{1}1,122 & \phantom{1,}101 & \phantom{1,}314 & \phantom{1,1}59 & \phantom{1,1}77 & \phantom{9}2  & \phantom{5}29 & 0  & \phantom{15}7  & 0  & \phantom{4}1 & 0  & 0  & 0  & 0 \\ \hline
11&\phantom{2}64,042& 17,566& 11,009& \phantom{1}4,158& \phantom{1}4,166& \phantom{1,}450& 1,245& \phantom{1,}264& \phantom{1,}296& 16& 128& 0& \phantom{1}35& 0& \phantom{4}8& 0& 1& 0& 0\\ \hline
12&232,018& 63,530& 39,920& 15,314& 15,431& 1,883& 4,856& 1,120& 1,142& 93& 537& 0& 156& 2& 44& 0& 9& 0& 1\\ \hline
\end{tabular}
}
\caption{The distribution of critical group orders $m=\abs{\mathcal{K}(D_n; \vd, \vr)}$ for $4\leq n\leq 12$.}\label{table:dist}
\end{table}

In this section, we consider the following two dual questions that explain the distribution of possible critical group orders, including the gaps observed above.
\begin{enumerate}[(1)]
\item Given some $n$, what is the maximal order of a critical group of an arithmetical structure on $D_n$?
\item Given some $m$, what is the minimal number of vertices $n$ such that there is an arithmetical structure on $D_n$ with critical group of order $m$?
\end{enumerate}
Note that the first question is asking for the last nonzero entry in each row of Table~\ref{table:dist} and the second question is asking for the first nonzero entry in each column of Table~\ref{table:dist}.

\subsection{Basic properties}

Recall that the \emph{critical group} $\mathcal{K}(D_n; \vd, \vr)$ of an arithmetical structure $(\vd,\vr)$ is the torsion part of the cokernel of the matrix $L(D_n,\vd)\coloneqq\diag(\vd)-A$.

\begin{proposition}\label{prop:cyclic}
Let $n\geq 3$. The critical group of any arithmetical structure on $D_n$ is cyclic.
\end{proposition}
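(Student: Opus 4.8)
The plan is to prove cyclicity by showing that all but one of the invariant factors of $L(D_n,\vd)$ equal $1$. Since $L(D_n,\vd)$ has rank $n-1$, its Smith normal form is $\diag(\alpha_1,\dots,\alpha_{n-1},0)$ with $\alpha_1\mid\alpha_2\mid\dots\mid\alpha_{n-1}$, and the critical group is $\bigoplus_{i=1}^{n-1}\mathbb{Z}/\alpha_i\mathbb{Z}$. This group is cyclic precisely when $\alpha_1=\dots=\alpha_{n-2}=1$. Because the product $\alpha_1\cdots\alpha_{n-2}$ equals the $(n-2)$-nd determinantal divisor, namely the greatest common divisor of all $(n-2)\times(n-2)$ minors of $L(D_n,\vd)$, it suffices to exhibit a single such minor equal to $\pm1$.

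First I would write $L(D_n,\vd)$ in the vertex order $v_x,v_y,v_0,v_1,\dots,v_\ell$ with $\ell=n-3$, recording that the only off-diagonal $-1$ entries come from the edges $v_xv_0$, $v_yv_0$, and the tail edges $v_0v_1,\dots,v_{\ell-1}v_\ell$. The minor I propose to use is obtained by deleting the two rows indexed by $v_x$ and $v_y$ together with the two columns indexed by $v_x$ and $v_\ell$; the surviving rows are then $v_0,v_1,\dots,v_\ell$ and the surviving columns are $v_y,v_0,v_1,\dots,v_{\ell-1}$, giving an $(n-2)\times(n-2)$ matrix.

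The crux is the computation that this minor equals $\pm1$. In the column indexed by $v_y$, the only nonzero entry is $-1$, occurring in the row indexed by $v_0$, since $v_y$ is adjacent only to $v_0$. Expanding along this column reduces the determinant, up to sign, to the $\ell\times\ell$ minor $M$ on rows $v_1,\dots,v_\ell$ and columns $v_0,\dots,v_{\ell-1}$. After shifting the column index by one, $M$ becomes upper triangular: for each $i$ the edge entry $L_{i,i-1}=-1$ lands on the diagonal, while the diagonal terms $d_i$ and the remaining $-1$'s lie strictly above it. Hence $\det M=(-1)^\ell$, and the full minor equals $(-1)^{\ell+1}=\pm1$. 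I would verify the reduction explicitly in a tiny case such as $D_4$ (where $\ell=1$ and the minor is a $2\times2$ determinant equal to $1$) to confirm the signs.

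I expect the only genuine obstacle to be the bookkeeping in choosing rows and columns so that the degree-$3$ vertex $v_0$ does not spoil the triangular structure. The choice above works because deleting the columns $v_x$ and $v_\ell$ (rather than, say, $v_x$ and $v_y$) lets the tail edge entries serve as the diagonal of $M$, while the single retained column $v_y$ contributes exactly one additional $-1$ in the row of $v_0$. Once the minor is shown to be $\pm1$, cyclicity follows immediately from the determinantal-divisor identity, requiring no case analysis on the prime factorizations of the $d_i$ and no appeal to smoothness.
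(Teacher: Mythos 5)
Your proposal is correct and follows essentially the same route as the paper: exhibit a single $(n-2)\times(n-2)$ minor of $L(D_n,\vd)$ equal to $\pm1$, forcing $\alpha_1\dotsm\alpha_{n-2}=1$ in the Smith normal form. The paper deletes columns $v_x,v_y$ and rows $v_x,v_\ell$, which by symmetry of $L(D_n,\vd)$ is the transpose of your minor; you additionally carry out the triangularization explicitly, which the paper leaves to the reader.
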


The proof of Proposition~\ref{prop:cyclic} relies on the fact that the torsion part of the cokernel of $L(D_n,\vd)$ is given by the direct sum $\bigoplus_{i=1}^{n-1} \mathbb{Z}/{\alpha_i}\mathbb{Z}$, where $\alpha_i$ is the $i$-th diagonal entry of the Smith normal form of $L(D_n,\vd)$. It is well known that the product $\alpha_1\alpha_2\dotsm\alpha_i$ is given by the greatest common divisor of the  $i\times i$ minors of $L(D_n,\vd)$. In particular, since $L(D_n,\vd)$ has rank $n-1$, we have that $\alpha_1\leq \alpha_2\leq \dotsb\leq\alpha_{n-1}$.  For more details about the Smith normal form of a matrix, see~\cite{SNF}.

\begin{proof}[Proof of Proposition~\ref{prop:cyclic}]
Fix $n$, and let $(\vd,\vr)$ be an arithmetical structure on $D_n$. We first find an $(n-2)\times(n-2)$ minor of $L(D_n,\vd)$ with value $\pm1$. Such a minor can be obtained by deleting the columns associated to vertices $v_x$ and $v_y$ and deleting the rows associated to vertices $v_x$ and $v_\ell$. The greatest common divisor of the $(n-2)\times(n-2)$ minors is thus $1$, which implies that $\alpha_1\alpha_2\dotsm\alpha_{n-2}=1$.  Therefore the Smith normal form of $L(D_n,\vd)$ has at most one nontrivial diagonal entry, so the critical group $\mathcal{K}(D_n; \vd, \vr)$ is cyclic.
\end{proof}

It remains to consider the order of the critical group of a given arithmetical structure.  Since $D_n$ is a tree, the following proposition of Lorenzini given in~\cite{L89} applies.

\begin{proposition}[{\cite[Corollary 2.5]{L89}}] \label{prop: trees}
For any tree $T$ and any arithmetical structure $(\vd, \vr)$ on $T$, the order of the corresponding critical group is given by
\[
\abs{\mathcal{K}(T; \vd, \vr)}=\prod_{v\in T} r_v^{\deg(v)-2}.
\]
\end{proposition}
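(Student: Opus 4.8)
The plan is to combine a standard fact about the adjugate of a symmetric matrix of corank one with an inductive computation of certain subtree determinants. Write $L=L(T,\vd)=\diag(\vd)-A$, an $n\times n$ symmetric integer matrix of rank $n-1$, whose kernel is spanned by the primitive vector $\vr$. First I would record the linear-algebra input. Since $L\cdot\operatorname{adj}(L)=\det(L)I=0$ and $L$ is symmetric, every column of $\operatorname{adj}(L)$ lies in $\ker L=\langle\vr\rangle$ and (by symmetry of $\operatorname{adj}(L)$) every row lies in the equal left kernel, so $\operatorname{adj}(L)=c\,\vr\vr^{T}$ for some integer $c$. As $\vr$ is primitive, the greatest common divisor of the entries of $\vr\vr^{T}$ is $1$, so the gcd of the entries of $\operatorname{adj}(L)$ is $\abs{c}$. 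As recalled before Proposition~\ref{prop:cyclic}, this gcd of the $(n-1)\times(n-1)$ minors of $L$ equals $\alpha_1\alpha_2\dotsm\alpha_{n-1}=\abs{\mathcal{K}(T;\vd,\vr)}$, whence $\abs{c}=\abs{\mathcal{K}(T;\vd,\vr)}$. Reading off a diagonal entry of $\operatorname{adj}(L)$ gives, for every vertex $v$,
\[
\abs{\det L_v}=\abs{\mathcal{K}(T;\vd,\vr)}\,r_v^{2},
\]
where $L_v$ is the principal submatrix of $L$ obtained by deleting the row and column indexed by $v$.

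Next I would exploit that $T$ is a tree. Deleting $v$ leaves a forest whose components $T_1,\dotsc,T_{\deg(v)}$ have no edges between them, so $L_v$ is block diagonal with one block $L|_{T_j}$ per component and $\abs{\det L_v}=\prod_j\abs{\det(L|_{T_j})}$. The crux is the following determinant lemma: if $T'$ is the component of $T\setminus\{v\}$ attached to $v$ through a neighbor $u$, then
\[
\det(L|_{T'})=r_v\prod_{w\in T'}r_w^{\deg(w)-2},
\]
where all degrees are taken in the full tree $T$. I would prove this by induction on $\abs{T'}$, rooting $T'$ at $u$ and establishing the refined claim $\det(L|_{T'_w})=r_{p(w)}\prod_{x\in T'_w}r_x^{\deg(x)-2}$ for every rooted subtree $T'_w$ with parent $p(w)$. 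The base case is a $T$-leaf $w$, where $\det(L|_{T'_w})=d_w=r_{p(w)}/r_w$. For the inductive step, if $w$ has children $c_1,\dotsc,c_s$ (so $\deg(w)=s+1$ in either case), a Schur-complement expansion gives
\[
\det(L|_{T'_w})=\Bigl(\prod_k \det(L|_{T'_{c_k}})\Bigr)\Bigl(d_w-\sum_k \frac{N_{c_k}}{\det(L|_{T'_{c_k}})}\Bigr),
\]
where $\mathbf{e}$ is the indicator of the child-roots and $N_{c_k}$ is the minor of the block $L|_{T'_{c_k}}$ deleting the row and column of $c_k$. The inductive hypothesis collapses each ratio $N_{c_k}/\det(L|_{T'_{c_k}})$ to $r_{c_k}/r_w$, and the arithmetical-structure relation $d_w r_w=r_{p(w)}+\sum_k r_{c_k}$ turns the second factor into $r_{p(w)}/r_w$; reassembling yields exactly $r_{p(w)}\prod_{x\in T'_w}r_x^{\deg(x)-2}$. (Positivity is carried along the induction, so every block determinant is a positive integer and the blocks are invertible, justifying the Schur step.)

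Finally I would combine the pieces at a single vertex $v$. Since the $\deg(v)$ components $T_j$ partition $T\setminus\{v\}$, the lemma gives
\[
\abs{\det L_v}=\prod_{j}\Bigl(r_v\prod_{w\in T_j}r_w^{\deg(w)-2}\Bigr)=r_v^{\deg(v)}\prod_{w\neq v}r_w^{\deg(w)-2}=r_v^{2}\prod_{w\in T}r_w^{\deg(w)-2}.
\]
Comparing with $\abs{\det L_v}=\abs{\mathcal{K}(T;\vd,\vr)}\,r_v^{2}$ and cancelling $r_v^{2}$ yields the claimed formula. I expect the determinant lemma to be the main obstacle: the Schur-complement step needs the child-block determinants to be nonzero, and the simplification of each ratio $N_{c_k}/\det(L|_{T'_{c_k}})$ to $r_{c_k}/r_w$ requires careful bookkeeping with the \emph{global} degrees rather than the degrees within each subtree. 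Everything else is formal once the lemma is in hand, and the fact that the final computation is independent of the chosen vertex $v$ serves as a useful consistency check.
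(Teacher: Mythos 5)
Your proposal is correct, but note that the paper does not prove this proposition at all: it is quoted directly from Lorenzini \cite[Corollary 2.5]{L89}, so there is no in-paper argument to compare against. What you have written is a legitimate self-contained proof. The two pillars both check out. First, since $L$ is symmetric of rank $n-1$ with kernel spanned by the primitive vector $\vr$, one indeed gets $\operatorname{adj}(L)=c\,\vr\vr^{T}$ with $c\in\ZZ$ (integrality follows because the entries $r_ir_j$ have gcd $1$: a prime dividing every $r_ir_j$ would divide every $r_i^2$ and hence every $r_i$), and the identification $\abs{c}=\alpha_1\dotsm\alpha_{n-1}=\abs{\mathcal{K}(T;\vd,\vr)}$ via the gcd of the $(n-1)\times(n-1)$ minors is exactly the Smith-normal-form fact the paper records before Proposition~\ref{prop:cyclic}. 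Second, your rooted-subtree determinant lemma is the real content, and the induction closes: for a child $c_k$ with $s_k$ children, the inductive hypothesis gives $N_{c_k}/\det(L|_{T'_{c_k}})=r_{c_k}^{s_k}\big/\bigl(r_w\,r_{c_k}^{\deg(c_k)-2}\bigr)=r_{c_k}/r_w$ precisely because the global degree satisfies $\deg(c_k)=s_k+1$, and then $d_wr_w=r_{p(w)}+\sum_k r_{c_k}$ turns the Schur factor into $r_{p(w)}/r_w$. The positivity you carry along the induction is what licenses inverting the child blocks, and it also shows a posteriori that $c>0$, so the absolute values are harmless. I spot-checked the lemma on the structure $\vd=(6,3,1,2)$, $\vr=(1,2,6,3)$ on $D_4$ and it gives $\det L_{v_0}=6\cdot3\cdot2=36=\abs{\mathcal{K}}\cdot r_0^2$ as required. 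The one cosmetic blemish is that the vector $\mathbf{e}$ appears in your prose but not in the displayed Schur formula; the formula itself is the correct expansion of $d_w-\mathbf{e}^{T}\bigl(\bigoplus_k L|_{T'_{c_k}}\bigr)^{-1}\mathbf{e}$. In short: the paper buys the statement by citation, while your argument proves it from scratch using only the tree structure and the defining relations $d_vr_v=\sum_{u\sim v}r_u$, which is a worthwhile addition for self-containedness.
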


Applying this proposition to bidents, we obtain the following corollaries.

\begin{corollary}\label{cor: size of K}
Let $n\geq4$, let $(\vd, \vr)$ be an arithmetical structure on $D_n$, and let $\ell=n-3$. The corresponding critical group has order given by
\[
\abs{\mathcal{K}(D_n; \vd, \vr)}=\frac{r_0}{r_x r_y r_\ell}.
\]
\end{corollary}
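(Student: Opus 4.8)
The plan is to apply Proposition~\ref{prop: trees} directly, since $D_n$ is a tree. First I would record the degree sequence of the bident: the three leaves $v_x$, $v_y$, and $v_\ell$ each have degree $1$; the central vertex $v_0$ has degree $3$; and each of the remaining vertices $v_1, v_2, \dotsc, v_{\ell-1}$ (if any) has degree $2$. This is immediate from the structure of $D_n$ shown in Figure~\ref{fig:bident}.

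Next I would substitute these degrees into the product formula of Proposition~\ref{prop: trees}. A vertex $v$ of degree $2$ contributes the factor $r_v^{\,0}=1$, so all of the interior path vertices $v_1, \dotsc, v_{\ell-1}$ drop out of the product. Each of the three leaves $v_x$, $v_y$, $v_\ell$ has degree $1$ and hence contributes $r_v^{-1}$, while the degree-$3$ vertex $v_0$ contributes $r_0^{\,1}=r_0$. Collecting these factors gives
\[
\abs{\mathcal{K}(D_n; \vd, \vr)}=\prod_{v\in D_n} r_v^{\deg(v)-2}= r_0\cdot r_x^{-1}\cdot r_y^{-1}\cdot r_\ell^{-1}=\frac{r_0}{r_x r_y r_\ell},
\]
which is exactly the claimed formula.

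The only point requiring care is the boundary case $\ell=1$ (that is, $n=4$), where the tail consists of the single edge joining $v_0$ to $v_1$ and we have $v_1=v_\ell$. In this case there are no interior degree-$2$ vertices, so the index set $\{v_1,\dotsc,v_{\ell-1}\}$ is empty; the leaf $v_\ell=v_1$ still has degree $1$, and the computation above goes through verbatim. Since the entire argument is a direct evaluation of a known product formula, I do not anticipate any genuine obstacle: the work lies only in correctly reading off the degrees and checking that the exponents combine as stated.
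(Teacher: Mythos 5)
Your proposal is correct and is exactly the argument the paper intends: the corollary is presented as an immediate application of Proposition~\ref{prop: trees}, obtained by reading off the degrees of the bident (three leaves, one degree-$3$ vertex, and degree-$2$ interior vertices that contribute trivially). Your handling of the boundary case $\ell=1$ is a nice extra check but introduces no difference in approach.
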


\begin{corollary}\label{cor:d3}
Let $(\vd,\vr)$ be an arithmetical structure on $D_3$. Then $\mathcal{K}(D_3;\vd,\vr)$ is the trivial group.
\end{corollary}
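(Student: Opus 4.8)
The plan is to invoke Lorenzini's formula (Proposition~\ref{prop: trees}) directly, since $D_3$ is a tree. Note first that $D_3$ is simply a path on three vertices with $v_0$ in the middle, so that $\deg(v_x)=\deg(v_y)=1$ and $\deg(v_0)=2$. Observe that Corollary~\ref{cor: size of K} is stated only for $n\geq 4$ and so does not directly apply here; one must return to the general tree formula.

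Applying Proposition~\ref{prop: trees} to $D_3$ with these degrees gives
\[
\abs{\mathcal{K}(D_3;\vd,\vr)}=\prod_{v}r_v^{\deg(v)-2}=r_x^{-1}r_y^{-1}r_0^{0}=\frac{1}{r_xr_y}.
\]
Since the left-hand side is the order of a finite abelian group, it is a positive integer, while $r_x,r_y\geq1$ forces $r_xr_y\geq1$. The only possibility is $r_xr_y=1$, so $r_x=r_y=1$ and the order equals $1$; hence $\mathcal{K}(D_3;\vd,\vr)$ is trivial.

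There is essentially no obstacle in this argument: the entire content is the observation that an integer equal to $1/(r_xr_y)$ with positive integer $r_x,r_y$ must be $1$. As an alternative that avoids quoting the tree formula, one could argue directly from the defining equations $d_xr_x=r_0$, $d_yr_y=r_0$, and $d_0r_0=r_x+r_y$: since $r_x\mid r_0$ and $r_y\mid r_0$, the relation $d_0r_0=r_x+r_y$ yields $r_x\mid r_y$ and $r_y\mid r_x$, whence $r_x=r_y$; primitivity of $\vr$ then forces $r_x=r_y=1$, and one computes the critical group to be trivial (for instance by checking the two arithmetical structures on $D_3$ identified in the proof of Theorem~\ref{thm:smoothtooverall}). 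The formula-based argument is cleaner and requires no case analysis, so I would present that as the main proof.
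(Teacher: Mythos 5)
Your main argument is exactly the paper's proof: apply Proposition~\ref{prop: trees} to $D_3$ to get $\abs{\mathcal{K}(D_3;\vd,\vr)}=1/(r_xr_y)$ and conclude that this positive integer must equal $1$. The proposal is correct, and the extra details (the degree count, the alternative direct argument) are fine but not needed.
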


\begin{proof}
Proposition~\ref{prop: trees} gives that
\[
\abs{\mathcal{K}(D_3;\vd,\vr)}=\frac{1}{r_xr_y}.
\]
Since $r_x$ and $r_y$ are integers, we must have $\abs{\mathcal{K}(D_3;\vd,\vr)}=1$; hence $\mathcal{K}(D_3;\vd,\vr)$ is the trivial group.
\end{proof}

\begin{corollary}\label{cor:paths}
Let $n\geq4$, and let $(\vd,\vr)$ be an arithmetical structure on $D_n$. If $d_x=1$ or $d_y=1$, then $\mathcal{K}(D_n;\vd,\vr)$ is the trivial group.
\end{corollary}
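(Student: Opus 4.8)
The plan is to deduce triviality directly from the order formula in Corollary~\ref{cor: size of K}, which states that for $n\geq 4$ we have $\abs{\mathcal{K}(D_n;\vd,\vr)}=\frac{r_0}{r_xr_yr_\ell}$. By the symmetry of $D_n$ in the two prong vertices $v_x$ and $v_y$, it suffices to treat the case $d_x=1$; the case $d_y=1$ will follow by interchanging the roles of $x$ and $y$.

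First I would extract the defining relation at the degree-one vertex $v_x$. Since $v_x$ is adjacent only to $v_0$, the row of the equation $L(D_n,\vd)\vr=\vzero$ indexed by $v_x$ reads $d_xr_x-r_0=0$, that is, $d_xr_x=r_0$ (this is exactly the relation already invoked in the proof of Lemma~\ref{lem:smoothcharacterization}). Setting $d_x=1$ therefore forces $r_x=r_0$. Substituting this into the order formula gives
\[
\abs{\mathcal{K}(D_n;\vd,\vr)}=\frac{r_0}{r_xr_yr_\ell}=\frac{r_0}{r_0\,r_y\,r_\ell}=\frac{1}{r_yr_\ell}.
\]
Because $r_y$ and $r_\ell$ are positive integers, the right-hand side is at most $1$; but it is also the order of a finite group and hence a positive integer. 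The only possibility is $\abs{\mathcal{K}(D_n;\vd,\vr)}=1$, so the critical group is trivial. An identical computation with $d_y=1$ yields $r_y=r_0$ and $\abs{\mathcal{K}(D_n;\vd,\vr)}=\frac{1}{r_xr_\ell}=1$.

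Honestly, there is no genuine obstacle here: once Corollary~\ref{cor: size of K} is in hand, the result is a one-line arithmetic consequence of the degree-one relation $d_x r_x = r_0$. The only point that deserves a moment's care is the final integrality argument, namely that a positive rational which is simultaneously at most $1$ and equal to the order of a finite group must be exactly $1$; this, together with the trivial observation that a group of order $1$ is trivial, completes the proof. (One could alternatively cite Proposition~\ref{prop:cyclic} to note the group is cyclic, but this is not needed.)
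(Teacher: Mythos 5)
Your proof is correct and follows essentially the same route as the paper: use the degree-one relation $d_xr_x=r_0$ to get $r_x=r_0$, substitute into the order formula of Corollary~\ref{cor: size of K} to obtain $\frac{1}{r_yr_\ell}$, and conclude by integrality that the order is $1$. (The paper's own proof writes the resulting expression as $\frac{1}{r_1r_\ell}$, apparently a typo for $\frac{1}{r_yr_\ell}$; your version is the cleaner one, and the integrality conclusion is identical.)
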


\begin{proof}
If $d_x=1$, then $r_x=r_0$. Corollary~\ref{cor: size of K} then gives that
\[
\abs{\mathcal{K}(D_n;\vd,\vr)}=\frac{r_0}{r_x r_y r_\ell}=\frac{1}{r_1r_\ell}.
\]
Since $r_1$ and $r_\ell$ are integers, we must have $\abs{\mathcal{K}(D_n;\vd,\vr)}=1$; hence $\mathcal{K}(D_n;\vd,\vr)$ is the trivial group. The same argument applies if $d_y=1$.
\end{proof}

We next show that the smoothing and subdivision operations on bidents described in Section~\ref{sec:smooth} preserve the critical group.

\begin{lemma}\label{lem:smoothingpreservesorder}
Let $(\vd,\vr)$ be an arithmetical structure on $D_n$ where $n\geq4$, and let $(\vd',\vr')$ be an ancestor of $(\vd,\vr)$ on some $D_m$, where $3\leq m\leq n$. Then $\mathcal{K}(D_m;\vd',\vr')\cong\mathcal{K}(D_n;\vd,\vr)$.
\end{lemma}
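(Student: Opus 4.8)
The plan is to reduce the statement to a single smoothing step and then to a comparison of orders, using the fact that all the critical groups in play are cyclic. Since smoothing and subdivision are mutually inverse and an ancestor is reached by a finite sequence of smoothings, it suffices by induction along this chain to prove that one smoothing operation $D_k\to D_{k-1}$ preserves the critical group. Moreover, because the ancestor $(\vd',\vr')$ is assumed to live on a \emph{bident} $D_m$, every intermediate structure in the chain is a bident and every smoothing is performed at a tail vertex $v_i$ with $1\leq i\leq \ell$: a smoothing at $v_x$ or $v_y$ would produce a path graph, from which no subsequent smoothing can return to the bident family. So I only need to treat tail smoothings. Finally, by Proposition~\ref{prop:cyclic} both $\mathcal{K}(D_m;\vd',\vr')$ and $\mathcal{K}(D_n;\vd,\vr)$ are finite cyclic groups, and two finite cyclic groups are isomorphic exactly when they have the same order; thus it is enough to show that a single tail smoothing preserves $\abs{\mathcal{K}}$.

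For the generic case, suppose $k\geq 5$, so that both $D_k$ and $D_{k-1}$ have at least four vertices and Corollary~\ref{cor: size of K} applies to give $\abs{\mathcal{K}}=\frac{r_0}{r_x r_y r_\ell}$, where $\ell=k-3$. A tail smoothing leaves the central value $r_0$ and the two prong values $r_x,r_y$ untouched, since these coordinates of $\vr$ do not appear in the smoothing formulas. Hence the only thing to track is the final tail entry. For an interior smoothing at $v_i$ with $1\leq i\leq \ell-1$, the defining formula for $\vr'$ gives $r'_{\ell-1}=r_\ell$, so the last entry is unchanged. For a smoothing at the end of the tail $v_\ell$ we have $d_\ell=1$, whence $r_{\ell-1}=d_\ell r_\ell=r_\ell$, and the new final entry $r'_{\ell-1}=r_{\ell-1}$ again equals $r_\ell$. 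In both cases the quantity $\frac{r_0}{r_x r_y r_\ell}$ is preserved, so $\abs{\mathcal{K}}$ is preserved and the cyclic groups are isomorphic.

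It remains to handle the boundary step $k=4$, where Corollary~\ref{cor: size of K} no longer applies to the target $D_3$. A tail smoothing $D_4\to D_3$ is necessarily a smoothing at the end of the tail $v_1=v_\ell$ with $d_1=1$; since $d_1 r_1=r_0$ on $D_4$, this forces $r_1=r_0$. Corollary~\ref{cor: size of K} then gives $\abs{\mathcal{K}(D_4;\vd,\vr)}=\frac{r_0}{r_x r_y r_1}=\frac{1}{r_x r_y}$, which, being a positive integer, forces $r_x=r_y=1$ and hence $\abs{\mathcal{K}(D_4;\vd,\vr)}=1$. By Corollary~\ref{cor:d3}, $\mathcal{K}(D_3;\vd',\vr')$ is trivial as well, so the (trivial) critical group is preserved here too. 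Combining this boundary case with the generic case and inducting along the smoothing chain yields the isomorphism $\mathcal{K}(D_m;\vd',\vr')\cong\mathcal{K}(D_n;\vd,\vr)$. The main obstacle, such as it is, is the careful bookkeeping of the final tail entry through the two flavors of tail smoothing—in particular verifying that $r_\ell$ survives an end-of-tail smoothing via the relation $r_{\ell-1}=d_\ell r_\ell$ with $d_\ell=1$—together with isolating the $D_4\to D_3$ step, where the order formula of Corollary~\ref{cor: size of K} is unavailable and one instead invokes Corollary~\ref{cor:d3}.
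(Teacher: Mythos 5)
Your proof is correct and follows essentially the same route as the paper's: reduce to orders via cyclicity (Proposition~\ref{prop:cyclic}), track the unchanged values $r_x$, $r_y$, $r_0$ and the final tail entry through the two flavors of tail smoothing using Corollary~\ref{cor: size of K}, and treat the $D_4\to D_3$ step separately via triviality of the critical group. The only material you add is the explicit justification that smoothings at $v_x$ or $v_y$ cannot occur in a chain whose ancestor is a bident, which the paper leaves implicit.
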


\begin{proof}
Since, by Proposition~\ref{prop:cyclic}, all critical groups of arithmetical structures on $D_n$ are cyclic, it suffices to show that all possible smoothing operations on bidents preserve the order of the critical group. When smoothing at a vertex of degree $2$ in the tail, the values of $r_i$ for $i \in \{x,y,0\}$ are unchanged, and $r'_{\ell-1}=r_{\ell}$, so Corollary~\ref{cor: size of K} implies that the order of the critical group is unchanged. Smoothing at the endpoint vertex $v_{\ell}$ with $\ell\geq 2$, we have that $r'_{\ell-1}=r_{\ell-1}=r_{\ell}$, and hence smoothing at vertex $v_{\ell}$ does not change the expression in Corollary~\ref{cor: size of K}. If $\ell=1$, smoothing at $v_1$ in $D_4$ results in the graph $D_3$, and, since we must have had that $r_1=r_0$, it follows that $\abs{\mathcal{K}(D_4;\vd,\vr)}=\frac{r_0}{r_xr_yr_1}=\frac{1}{r_xr_y}=\abs{\mathcal{K}(D_3;\vd',\vr')}$.
\end{proof}

Because of the preceding lemma, most of the extremal examples in this section involve critical groups of \emph{smooth} arithmetical structures. In the next lemma, we make an observation about the possible orders of critical groups of such structures on $D_n$.

\begin{lemma}\label{lem:crit group inequality}
Suppose $n\geq 4$, and let $(\vd,\vr)$ be a smooth arithmetical structure on $D_n$. Let $\ell=n-3$. Then
\[
\abs{\mathcal{K}(D_n; \vd, \vr)} \leq (n-3)\left(\frac{1}{\rr_y\rr_\ell}+\frac{1}{\rr_x\rr_\ell}\right)+\frac{1}{\rr_x\rr_y}.
\]
\end{lemma}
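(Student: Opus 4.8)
The plan is to start from the exact order formula and reduce the entire inequality to a single clean bound on $r_1$. By Corollary~\ref{cor: size of K}, we have $\abs{\mathcal{K}(D_n;\vd,\vr)} = \frac{r_0}{r_x r_y r_\ell}$. Since the structure is smooth, Proposition~\ref{prop:centraldis1} gives $d_0=1$, so the defining equation at the degree-three vertex $v_0$ reads $r_0 = r_x + r_y + r_1$. Substituting this and splitting the resulting fraction yields
\[
\abs{\mathcal{K}(D_n;\vd,\vr)} = \frac{1}{r_y r_\ell} + \frac{1}{r_x r_\ell} + \frac{r_1}{r_x r_y r_\ell}.
\]
Comparing with the claimed right-hand side $(n-3)\bigl(\frac{1}{r_y r_\ell}+\frac{1}{r_x r_\ell}\bigr)+\frac{1}{r_x r_y}$ and clearing the common denominator $r_x r_y r_\ell$, the whole statement becomes equivalent to the single inequality $r_1 \leq r_\ell + (n-4)(r_x+r_y)$.

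The key step is to establish this inequality from the monotonicity of the $r$-gaps. Because $r_0 = r_x + r_y + r_1$, the very first gap is exactly $r_0 - r_1 = r_x + r_y$. Lemma~\ref{lem:smooth}$(b)$ tells us that, for a smooth structure, the consecutive differences are nonincreasing, $r_0 - r_1 \geq r_1 - r_2 \geq \dotsb \geq r_{\ell-1} - r_\ell > 0$, so every gap $r_{i-1}-r_i$ is at most $r_x + r_y$. Writing $r_1 - r_\ell$ as the telescoping sum $\sum_{i=2}^{\ell}(r_{i-1}-r_i)$ of $\ell - 1 = n-4$ such gaps and bounding each term by $r_x+r_y$ gives $r_1 - r_\ell \leq (n-4)(r_x+r_y)$, which is precisely what is needed; substituting back recovers the stated bound. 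When $\ell = 1$ (that is, $n=4$), the telescoping sum is empty and the inequality degenerates to $r_1 = r_\ell$, so the claim still holds, with equality.

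I do not expect a serious obstacle here: the argument rests entirely on two already-established facts, the order formula of Corollary~\ref{cor: size of K} and the nonincreasing-gaps characterization of smoothness in Lemma~\ref{lem:smooth}. The only point requiring a moment of care is the observation that the largest gap $r_0 - r_1$ equals $r_x + r_y$, which is a direct consequence of $d_0 = 1$; once that is in hand, the telescoping estimate is immediate.
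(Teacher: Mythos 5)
Your proof is correct and is essentially the paper's own argument: both rest on the order formula of Corollary~\ref{cor: size of K}, the identity $r_0-r_1=r_x+r_y$ from Proposition~\ref{prop:centraldis1}, and the nonincreasing-gaps property of Lemma~\ref{lem:smooth}$(b)$, combined via a telescoping sum. The only cosmetic difference is that you telescope $r_1-r_\ell$ over $n-4$ gaps after peeling off the first gap exactly, whereas the paper telescopes $r_0-r_\ell$ over $n-3$ gaps; these give the identical bound.
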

\begin{proof}
By Lemma~\ref{lem:smooth}, a smooth arithmetical structure on $D_n$ must satisfy $r_i-r_{i+1}\leq r_0-r_1$
for all $1\leq i \leq \ell-1$. We thus have that
\begin{align*}
\rr_0 & = (\rr_0-\rr_{1}) + (\rr_{1}-\rr_{2}) + \dotsb + (\rr_{\ell-1}-\rr_\ell) + \rr_\ell \\
& \leq \ell(\rr_0-\rr_1) + \rr_\ell.
\end{align*}
Furthermore, Proposition~\ref{prop:centraldis1} says that a smooth arithmetical structure must satisfy $\rr_0=\rr_x+\rr_y+\rr_1$, so therefore $\rr_0-\rr_1=\rr_x+\rr_y$. Using Corollary~\ref{cor: size of K}, we then have that
\[
\abs{\mathcal{K}(D_n; \vd, \vr)}=\frac{ \rr_0}{ \rr_{x}\rr_{y}\rr_{\ell}}  \leq \frac{\ell(\rr_x+\rr_y)+\rr_\ell}{\rr_{x}\rr_{y}\rr_{\ell}}= (n-3)\left(\frac{1}{\rr_y\rr_\ell}+\frac{1}{\rr_x\rr_\ell}\right)+\frac{1}{\rr_x\rr_y}.\qedhere
\]
\end{proof}

\subsection{Maximal order of a critical group of an arithmetical structure on \texorpdfstring{$\bm{D_n}$}{D\_n}}

The main result of this subsection gives the maximal order of a critical group of an arithmetical structure on $D_n$ and shows that it is realized by a unique arithmetical structure. This maximal order is always odd, so it is natural to also ask for the maximal even order of a critical group of an arithmetical structure on $D_n$. The following theorem addresses both of these questions.

\begin{theorem}\label{thm:criticalgp}
Let $n\geq 4$. The following properties hold:
\begin{enumerate}[$(a)$]
\item \label{cond:cardinality} The maximal order of a critical group of an arithmetical structure on $D_n$ is $2n-5$. Moreover, there is a unique arithmetical structure on $D_n$ whose critical group has order $2n-5$.
\item \label{cond:even} The maximal even order of a critical group of an arithmetical structure on $D_n$ is given by
\[
\max\{\abs{\mathcal{K}(D_n;\vd,\vr)} : \abs{\mathcal{K}(D_n;\vd,\vr)}\text{ is even}\} =\begin{cases}
6k-4 & \text{if } n=4k \text{ or } n=4k+1 \\
6k-2 & \text{if } n=4k+2 \text{ or } n=4k+3.
\end{cases}
\]
\end{enumerate}
\end{theorem}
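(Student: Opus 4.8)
The plan is to reduce both parts to smooth arithmetical structures and then exploit Corollary~\ref{cor: size of K} together with Lemma~\ref{lem:crit group inequality}. By Corollary~\ref{cor:paths}, any structure on $D_n$ with $d_x=1$ or $d_y=1$ has trivial critical group (order $1$, odd), so it is irrelevant to both maxima. For structures with $d_x,d_y\geq2$, Lemma~\ref{lem:uniquesmoothancestor} and Lemma~\ref{lem:smoothingpreservesorder} show that the critical group is isomorphic to that of a unique smooth ancestor on some $D_m$ with $3\leq m\leq n$, while Proposition~\ref{prop:descendantcount} shows that every smooth structure on $D_m$ with $m\leq n$ has a descendant on $D_n$ with the same critical group. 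Hence the orders occurring on $D_n$ are exactly $\{1\}$ together with the orders $\abs{\mathcal{K}}=r_0/(r_xr_yr_\ell)$ of smooth structures on $D_m$ for $4\leq m\leq n$, and it suffices to analyze this quantity using the bound of Lemma~\ref{lem:crit group inequality}.

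For part $(a)$, since $r_x,r_y,r_\ell\geq1$, Lemma~\ref{lem:crit group inequality} gives $\abs{\mathcal{K}}\leq 2(m-3)+1=2m-5\leq 2n-5$. To realize $2n-5$ I would exhibit the smooth structure on $D_n$ with $r_x=r_y=1$ and constant $r$-gaps of size $2$, namely $r_0=2n-5$, $r_i=2n-5-2i$, and $\vd=(2n-5,2n-5,1,2,\dotsc,2,3)$; a direct check confirms this is smooth with $\abs{\mathcal{K}}=2n-5$. For uniqueness, equality throughout forces $r_x=r_y=r_\ell=1$, forces $m=n$, and (tracing the proof of Lemma~\ref{lem:crit group inequality}) forces every gap $r_i-r_{i+1}$ to equal $r_0-r_1=r_x+r_y=2$; with $r_\ell=1$ this determines $r_0=2n-5$ and hence the structure uniquely. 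Since a non-smooth structure on $D_n$ with $d_x,d_y\ge2$ has smooth ancestor on $D_m$ with $m<n$ and thus order at most $2m-5<2n-5$, the maximizer is exactly this smooth structure.

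For part $(b)$, the maximum $2n-5$ is odd, so the even maximum is strictly smaller. First I would rule out $r_x=r_y=r_\ell=1$ for even orders: there $r_\ell=\gcd(r_0,r_1)=\gcd(r_0,r_0-2)=\gcd(r_0,2)=1$ forces $r_0$ odd, making $\abs{\mathcal{K}}=r_0$ odd. Inspecting Lemma~\ref{lem:crit group inequality}, the coefficient $\tfrac{1}{r_xr_\ell}+\tfrac{1}{r_yr_\ell}$ exceeds $3/2$ only for $(r_x,r_y,r_\ell)=(1,1,1)$ and equals $3/2$ only for $\{r_x,r_y\}=\{1,2\}$ with $r_\ell=1$; all other configurations give a coefficient at most $4/3$ when $r_\ell=1$ and at most $1$ when $r_\ell\geq2$, yielding bounds at or below the claimed values. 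This isolates the dominant even configuration $\{r_x,r_y\}=\{1,2\}$, $r_\ell=1$, where $r_1=r_0-3$ is forced and $\abs{\mathcal{K}}=r_0/2$; here $r_\ell=\gcd(r_0,3)=1$ requires $3\nmid r_0$, integrality of $d_x=r_0/2$ requires $r_0$ even, and evenness of $\abs{\mathcal{K}}$ requires $4\mid r_0$. The vertex count is pinned down by $n=F(r_0,r_0-3)+2$, which I would compute directly from the definition of $F$ (the chain decreases by $3$ until reaching $\{4,1\}$ or $\{5,2,1\}$), giving $F(r_0,r_0-3)=(r_0+2)/3$ if $r_0\equiv1\pmod3$ and $(r_0+4)/3$ if $r_0\equiv2\pmod3$. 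Maximizing $r_0$ subject to $4\mid r_0$, $3\nmid r_0$, and $F(r_0,r_0-3)+2\leq n$ then produces exactly the stated value in each residue class of $n$ modulo $4$; the realizing structures come from $r_0=12k-8$ (for $n=4k,4k+1$) and $r_0=12k-4$ (for $n=4k+2,4k+3$), each valid by Proposition~\ref{prop:abcuniqueness}.

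The main obstacle is the upper bound in part $(b)$: Lemma~\ref{lem:crit group inequality} alone only gives $\abs{\mathcal{K}}\leq\tfrac{3}{2}(m-3)+1$, which is too weak (it would permit order $6k$ when $n=4k+3$). Closing the gap requires the exact length relation $n=F(r_0,r_0-3)+2$ interlaced with the divisibility constraints $4\mid r_0$ and $3\nmid r_0$, producing a case analysis modulo $12$. One must also confirm that the remaining configurations never exceed the claimed maxima; the most delicate of these is $\{r_x,r_y\}=\{1,1\}$ with $r_\ell=2$, where $\abs{\mathcal{K}}=r_0/2\leq m-2$, which stays below the stated value for all but the smallest $n$.
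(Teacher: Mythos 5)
Your proposal is correct, and part $(a)$ together with the overall skeleton of part $(b)$ --- reduce to smooth structures via Lemmas~\ref{lem:uniquesmoothancestor} and~\ref{lem:smoothingpreservesorder}, rule out $r_x=r_y=r_\ell=1$ for even orders because $\gcd(r_0,r_0-2)=r_\ell=1$ forces $r_0$ odd, and isolate $\{r_x,r_y\}=\{1,2\}$, $r_\ell=1$ as the dominant configuration --- coincides with the paper's proof. Where you genuinely diverge is in how the dominant case is closed. The paper applies Lemma~\ref{lem:crit group inequality} with $r_x=2$, $r_y=r_\ell=1$ to get $\abs{\mathcal{K}(D_n;\vd,\vr)}\leq\frac{3(n-3)+1}{2}$, truncates to the largest even integer, and must then separately exclude order $6k$ when $n=4k+3$ by noting that $r_0=12k$ would force $r_\ell=\gcd(12k,12k-3)=3$. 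You instead compute the vertex count exactly as $n=F(r_0,r_0-3)+2$ with $F(r_0,r_0-3)=\frac{r_0+2}{3}$ or $\frac{r_0+4}{3}$ according to $r_0\bmod 3$ (your chain computation is correct), and maximize $r_0$ subject to $4\mid r_0$ and $3\nmid r_0$; this unifies existence and optimality and absorbs the troublesome $n=4k+3$ case without a separate patch, which is arguably cleaner. Two cautions on the residual cases. First, your claim that the other configurations yield ``bounds at or below the claimed values'' is not literally true of the raw bounds: $(r_x,r_y,r_\ell)=(1,3,1)$ gives $\frac{4(n-3)+1}{3}$, which exceeds $6k-2$ when $n=4k+3$ and $k\leq 3$, and the $(1,1,2)$ bound $n-2$ exceeds the claim at $n=5$ and $n=7$; in each such instance the largest \emph{even} integer not exceeding the bound still does not exceed the claim, so the argument survives, but that truncation must actually be carried out. (The paper sidesteps this case analysis by observing that the right-hand side of Lemma~\ref{lem:crit group inequality} is decreasing in each of $r_x$, $r_y$, $r_\ell$, so the single bound $\frac{3(n-3)+1}{2}$ already covers every configuration other than $(1,1,1)$.) Second, the configuration you flag as most delicate, $(1,1,2)$, is for $n>5$ dominated by $(1,3,1)$, so the verification you defer is slightly broader than you indicate, though it does go through.
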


\begin{proof}
Fix $n$, and let $(\vd,\vr)$ be an arithmetical structure on $D_n$. In considering part $(a)$, we first show that $2n-5$ is the maximal order of a critical group of a \emph{smooth} arithmetical structure on $D_n$. Using Lemma~\ref{lem:crit group inequality}, for a smooth arithmetical structure,
\begin{equation}
\abs{\mathcal{K}(D_n;\vd,\vr)}=\frac{r_0}{r_xr_yr_{\ell}}\leq (n-3)\left(\frac{1}{\rr_y\rr_\ell}+\frac{1}{\rr_x\rr_\ell}\right)+\frac{1}{\rr_x\rr_y} \leq 2n-5.   \label{eq:criticalgroupbound}
\end{equation}
To show that equality will hold for a unique smooth arithmetical structure on $D_n$, we note that in order for the second inequality in~\eqref{eq:criticalgroupbound} to be an equality, we must have $r_x=r_y=r_{\ell}=1$. If the first inequality in~\eqref{eq:criticalgroupbound} is also an equality, this then implies that $r_0=2n-5$. By Proposition~\ref{prop:abcuniqueness}, these choices for $r_x, r_y$, and $r_0$ do in fact determine a unique smooth arithmetical structure on some bident, namely that with $r_x=r_y=1$ and $r_i=2(n-i)-5$ for $0\leq i \leq \ell$. Noting that this gives $r_{n-3}=1$, this smooth arithmetical structure is on $D_n$.

Now suppose $(\vd,\vr)$ is a \emph{non-smooth} arithmetical structure $(\vd',\vr')$ on $D_n$. If $d_x=1$ or $d_y=1$, then Corollary~\ref{cor:paths} gives that $\abs{\mathcal{K}(D_n;\vd,\vr)}=1<2n-5$. If $d_x\geq2$ and $d_y\geq2$, then Lemma~\ref{lem:uniquesmoothancestor} gives that, by performing a sequence of smoothing operations, we obtain a smooth arithmetical structure on $D_N$ with $3\leq N< n$, and by Lemma~\ref{lem:smoothingpreservesorder} the critical group of this smooth arithmetical structure is isomorphic to that of $(\vd,\vr)$. In this case $N\leq n-1$, and hence, using~\eqref{eq:criticalgroupbound}, we have that
\[
\abs{\mathcal{K}(D_n;\vd,\vr)}=\abs{\mathcal{K}(D_N;\vd',\vr')}\leq 2N-5\leq 2(n-1)-5<2n-5.
\]
Therefore the smooth arithmetical structure described above is the unique arithmetical structure on $D_n$ with critical group of order $2n-5$, and there is no arithmetical structure on $D_n$ with larger order critical group.

Now consider part $(b)$. We first see that there exist arithmetical structures on bidents with critical groups of the orders given in the theorem. Let $k$ be a positive integer. Consider the smooth arithmetical structure determined by $r_x=2$, $r_y=1$, and $r_0=12k-8$. By Proposition~\ref{prop:abcuniqueness}, we have that $r_i=12k-8-3i$ for all $0\leq i \leq 4k-3$. Noting that $r_{4k-3}=1$, this gives an arithmetical structure on $D_{4k}$ with critical group of order $6k-4$.  Subdividing at $v_{4k-3}$, we get another (non-smooth) arithmetical structure on $D_{4k+1}$ with critical group of the same order. Next consider the smooth arithmetical structure determined by $r_x=2$, $r_y=1$, and $r_0=12k-4$. By Proposition~\ref{prop:abcuniqueness}, we have that $r_i=12k-4-3i$ for all $0\leq i \leq 4k-2$ (and thus $r_{4k-2}=2$) and $r_{4k-1}=1$. It is easily checked that this is a smooth arithmetical structure on $D_{4k+2}$ with critical group of order $6k-2$. Subdividing at $v_{4k-1}$, we get another (non-smooth) arithmetical structure on $D_{4k+3}$ with critical group of the same order.

Now let us see that these are the maximal even orders of critical groups of arithmetical structures on bidents. First consider smooth arithmetical structures. Notice that if $r_x=r_y=r_\ell=1$, then, using Proposition~\ref{prop:centraldis1} and Lemma~\ref{lem:gcd}$(b)$, we have that $\gcd(r_0,r_0-2)=\gcd(r_0,r_1)=r_{\ell}=1$, so therefore $r_0$ is odd. By Corollary~\ref{cor: size of K}, this implies that $\abs{\mathcal{K}(D_n; \vd, \vr)}$ is odd. Therefore, to obtain a critical group of even order, at least one of $r_x,r_y$ and $r_\ell$ must be greater than 1. Consider Lemma~\ref{lem:crit group inequality} in this new context. The extremal case occurs when $r_x=2$ and $r_y=r_\ell=1$ (or, symmetrically, when $r_y=2$ and $r_x=r_\ell=1$) and  thus gives that
\[
\abs{\mathcal{K}(D_n;\vd,\vr)}\leq \begin{cases}
6k-4 & \text{if } n=4k \\
6k-\frac{5}{2} & \text{if } n=4k+1 \\
6k-1 & \text{if } n=4k+2 \\
6k+\frac{1}{2} & \text{if } n=4k+3.
\end{cases}
\]

Since we are only considering even order critical groups, this inequality implies that
\[
\max\{\abs{\mathcal{K}(D_n;\vd,\vr)} : \abs{\mathcal{K}(D_n;\vd,\vr)}\text{ is even}\}\leq \begin{cases}
6k-4 & \text{if } n=4k \\
6k-4 & \text{if } n=4k+1 \\
6k-2 & \text{if } n=4k+2 \\
6k & \text{if } n=4k+3.
\end{cases}
\]
This proves that, for $n=4k$, $n=4k+1$, and $n=4k+2$, there are no smooth arithmetical structures whose critical group has a larger even order than those arithmetical structures found above. Moreover, if there were some non-smooth arithmetical structure whose critical group was a larger even order, then it would have a smooth ancestor on some $D_N$ with $3\leq N<n$ with the same order critical group. Inductively, we see that there are no such smooth arithmetical structures on these graphs.

It remains to show that there is no critical group  of order larger than $6k-2$ when $n=4k+3$. First note that such a structure would have to be smooth since if it were not smooth it would either have $d_x=1$ or $d_y=1$, in which case its critical group would be trivial by Corollary~\ref{cor:paths}, or have a smooth ancestor on some $D_N$ with $N<4k+3$ with the same order critical group, but the previous paragraph shows that this is impossible. Since the previous paragraph also shows that we cannot have a smooth arithmetical structure on $D_n$ with $n=4k+3$ whose critical group has even order larger than $6k$, it only remains to rule out the possibility of a smooth arithmetical structure on $D_n$ with $n=4k+3$ whose critical group has order $6k$.

Consider the case when $n=4k+3$, and suppose there exists a smooth arithmetical structure on $D_n$ with critical group of order $6k$. Then, by Corollary~\ref{cor: size of K}, we have that $\frac{r_0}{r_xr_yr_\ell}=6k$. It is important to note that, by Lemma~\ref{lem:crit group inequality}, our choices for $r_x$, $r_y$, and $r_\ell$ are limited. Notice that if $r_x,r_y\geq 2$ and $r_\ell\geq 1$, then
\[
\abs{\mathcal{K}(D_n;\vd,\vr)}\leq((4k+3)-3)\left(\frac{1}{2\cdot1} + \frac{1}{2\cdot 1}\right)+\frac{1}{2\cdot2}=4k+\frac{1}{4},
\]
which is less than $6k$ for any positive integer $k$. Similar results hold for: $r_x, r_\ell \geq 2$ and $r_y\geq1$; $r_x, r_\ell \geq 1$ and $r_y\geq3$;  and $r_x,r_y\geq 1$ and $r_\ell\geq 2$. Thus the only case we consider is when $r_y=r_\ell=1$ and $r_x=2$ (or, symmetrically, when $r_x=r_{\ell}=1$ and $r_y=2$). However, if $r_x=2$, $r_y=1$, and $r_0=12k$, Proposition~\ref{prop:centraldis1} and Lemma~\ref{lem:gcd}$(b)$ would then give $r_\ell=\gcd(12k,12k-3)=3$. Therefore it is not possible to have $r_y=r_\ell=1$ and $r_x=2$ together with a critical group of order $6k$.
\end{proof}

Notice that in the previous theorem the maximal even orders of critical groups are not divisible by $6$. We could continue our line of questioning by asking, for a given $n$, for the maximal order of a critical group of an arithmetical structure on $D_n$ that is $0$ modulo $6$.  However, as we will see, this is more naturally addressed by first fixing the order $m$ of a critical group and finding the smallest $n$ for which there is an arithmetical structure on $D_n$ whose critical group has order $m$. The next subsection addresses this question.

\subsection{Minimal number of vertices for a given order critical group}

We now consider the dual question to that of the previous subsection. Specifically, we fix $m$ and ask for which values of $n$ there is an arithmetical structure on $D_n$ whose critical group has order $m$.

We note that since, by Lemma~\ref{lem:smoothingpreservesorder}, subdivision of arithmetical structures on bidents preserves critical groups, it will be the case that if there is an arithmetical structure on $D_n$ with a given critical group then there will also be an arithmetical structure on $D_N$ with the same critical group for all $N \ge n$.  Therefore it suffices to ask for the minimal $n$ for which there is an arithmetical structure on $D_n$ with critical group of order $m$. Lemma~\ref{lem:smoothingpreservesorder} also shows that smoothing along the tail of a bident preserves the critical group, so therefore an arithmetical structure with critical group of order~$m$ on $D_n$ with minimal value of $n$ must satisfy the equivalent conditions of Lemma~\ref{lem:smooth}. Moreover, since critical groups of arithmetical structures with $d_x=1$ or $d_y=1$ are trivial by Corollary~\ref{cor:paths}, it suffices to restrict attention to smooth arithmetical structures on bidents.

We begin with the following result, which guarantees the existence of smooth arithmetical structures on bidents with certain order critical groups. In particular, for all $m$, we can choose $k=m+1$ in the following theorem and get a smooth arithmetical structure on $D_{m+2}$ with critical group of order $m$.

\begin{theorem}\label{T:construction}
Let $m,k\geq2$ be a pair of relatively prime integers, and let $r$ be the least residue of $m$ modulo $k$.  If $n= m-\floor*{\frac{m}{k}} + F(k,k-r)$, then there exists a smooth arithmetical structure on $D_n$ with critical group of order $m$.
\end{theorem}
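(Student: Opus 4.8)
The plan is to exhibit an explicit smooth arithmetical structure realizing a critical group of order $m$ and then read off its number of vertices using the function $F$. Since $\gcd(m,k)=1$ and $k\geq2$, we have $k\nmid m$, so the least residue $r$ satisfies $1\leq r\leq k-1$; write $m=qk+r$ with $q=\floor*{\frac{m}{k}}$. I would specify the primitive $\vr$-vector at the three distinguished vertices by setting $r_x=k-1$, $r_y=1$, and $r_0=m(k-1)$. These satisfy the hypotheses of Proposition~\ref{prop:abcuniqueness}: they are positive, share no common factor (since $r_y=1$), and obey $r_x,r_y\mid r_0$ together with $r_x,r_y<r_0$ (using $m\geq2$). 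Hence there is a unique smooth arithmetical structure on some bident $D_n$ with these prescribed values, and it remains only to compute $\abs{\mathcal{K}(D_n;\vd,\vr)}$ and $n$.

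For the order of the critical group, Proposition~\ref{prop:centraldis1} gives $d_0=1$, so $r_1=r_0-r_x-r_y=km-m-k$, and Lemma~\ref{lem:gcd}$(b)$ gives $r_\ell=\gcd(r_0,r_1)$. Since $r_0-r_1=k$, we have $\gcd(r_0,r_1)=\gcd(k,r_1)$, and because $r_1=km-m-k\equiv-m\pmod{k}$ this equals $\gcd(k,m)=1$. Thus $r_\ell=1$, and Corollary~\ref{cor: size of K} yields
\[
\abs{\mathcal{K}(D_n;\vd,\vr)}=\frac{r_0}{r_xr_yr_\ell}=\frac{m(k-1)}{(k-1)\cdot1\cdot1}=m,
\]
as required.

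It remains to identify $n$. Because the $r$-values $r_0,r_1,\dotsc,r_\ell$ of a smooth structure form a Euclidean chain, we have $n=F(r_0,r_1)+2$, so the main work is evaluating $F(r_0,r_1)=F\bigl(m(k-1),\,km-m-k\bigr)$ by means of Lemma~\ref{lem:Fbasics}. Since $m(k-1)=(km-m-k)+k$, part $(a)$ reduces the first argument to give $F\bigl(m(k-1),km-m-k\bigr)=F(k,km-m-k)$. Writing $km-m-k=k(m-2-q)+(k-r)$, which one verifies directly using $qk+r=m$, part $(b)$ then gives
\[
F(k,km-m-k)=F(k,k-r)+(m-2-q).
\]
Therefore $n=F(k,k-r)+(m-2-q)+2=m-\floor*{\frac{m}{k}}+F(k,k-r)$, completing the argument.

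I expect the genuine obstacles to be minor bookkeeping rather than conceptual. The construction itself is the creative step; once the values $r_x=k-1$, $r_y=1$, $r_0=m(k-1)$ are chosen, everything follows from the stated lemmas. The two points requiring care are verifying that $m-2-q\geq0$, so that part $(b)$ of Lemma~\ref{lem:Fbasics} applies with a nonnegative multiplier, and confirming that the degenerate possibility $n=3$ (equivalently $r_1=0$) never occurs; both reduce to checking that $m-\floor*{\frac{m}{k}}\geq2$, which holds for every admissible pair since the only exception $m=k=2$ is ruled out by the coprimality hypothesis.
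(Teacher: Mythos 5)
Your proposal is correct and follows essentially the same route as the paper: the same construction $r_x=k-1$, $r_y=1$, $r_0=m(k-1)$ (the paper merely swaps the roles of $v_x$ and $v_y$), the same appeal to Propositions~\ref{prop:abcuniqueness} and~\ref{prop:centraldis1}, Lemma~\ref{lem:gcd}$(b)$, and Corollary~\ref{cor: size of K}. The only cosmetic difference is that you evaluate $F(r_0,r_1)$ directly via parts $(a)$ and $(b)$ of Lemma~\ref{lem:Fbasics}, whereas the paper unrolls the arithmetic-progression chain $r_i=(qk+r-q-i)k-r$ explicitly; your closing remarks on $m-2-q\geq0$ and the non-degeneracy $r_1>0$ correctly dispose of the only edge cases.
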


\begin{proof}
Write $m=qk+r$ where $0\leq r<k$, and let $r_x=1$, $r_y=k-1$, and $r_0=m(k-1)=(qk+r-q)k-r$. By Proposition~\ref{prop:abcuniqueness}, this determines a unique smooth arithmetical structure on some $D_n$. By Proposition~\ref{prop:centraldis1}, we must have $r_1=r_0-(r_x+r_y)$, so $r_1=(qk+r-q-1)k-r$.  Moreover, $r_{i}=(qk+r-q-i)k-r$ for all $i$ in the range $0\leq i\leq qk+r-q-1$.  In particular, $r_{qk+r-q-2}=2k-r$ and $r_{qk+r-q-1}=k-r$.  We thus have that
\[
n=F(r_0,r_1)+2=qk+r-q-2+F(2k-r,k-r)+2=m-\floor*{\frac{m}{k}} + F(k,k-r).
\]
This is the desired value of $n$.

Using Lemma~\ref{lem:gcd}$(b)$ and the fact that $m$ and $k$ are relatively prime, we further have that
\[
r_{\ell}=\gcd(r_0,r_1)=\gcd(m(k-1),m(k-1)-k)=\gcd(m(k-1),k)=\gcd(m,k)=1.
\]
Therefore the order of the critical group of this arithmetical structure is $\frac{r_0}{r_xr_yr_\ell} = \frac{m(k-1)}{(k-1)}=m$, as desired.
\end{proof}

The above construction is optimal in the sense that, for fixed $m$, it can always be used to produce an arithmetical structure with a critical group of order $m$ on the smallest possible bident, as the following proposition shows. For ease of exposition, we define $N(m,k)=m-\floor*{\frac{m}{k}} + F(k,k-r)$.

\begin{proposition}\label{prop:optimal}
Let $m\geq2$. If $n$ is the smallest integer for which there is an arithmetical structure on $D_n$ with critical group of order $m$, then there is some $k\geq2$ coprime to $m$ for which $n=N(m,k)$.
\end{proposition}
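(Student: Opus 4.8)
The plan is to prove that the minimal $n$ equals $\min_k N(m,k)$, where the minimum ranges over integers $k\geq 2$ coprime to $m$. Since Theorem~\ref{T:construction} realizes each value $N(m,k)$ by an honest arithmetical structure with critical group of order $m$, the minimal $n$ is automatically at most $\min_k N(m,k)$; all the work lies in proving the reverse inequality $n\geq \min_k N(m,k)$ for \emph{every} structure whose critical group has order $m$. First I would invoke the discussion preceding the proposition: by Lemma~\ref{lem:smoothingpreservesorder} smoothing preserves the critical group, and by Corollary~\ref{cor:paths} any structure with $d_x=1$ or $d_y=1$ has trivial critical group, so for $m\geq 2$ a structure realizing $m$ on the smallest possible bident must be smooth. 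Thus it suffices to bound $n$ below for an arbitrary smooth structure.

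Second, I would set up coordinates. Let $(\vd,\vr)$ be smooth with critical group of order $m$, and write $r_x=a$, $r_y=b$ with $a\geq b$ (permissible by the symmetry of the two prongs) and $r_\ell=g$. Lemma~\ref{lem:gcd} gives $\gcd(a,b)=1$ and $g=\gcd(r_0,r_1)$, Proposition~\ref{prop:centraldis1} gives $r_1=r_0-a-b$ so that $g=\gcd(r_0,a+b)$ and hence $g\mid(a+b)$, and Corollary~\ref{cor: size of K} gives $r_0=mabg$. Writing $a+b=gh$, the equality $g=\gcd(mabg,gh)=g\gcd(mab,h)$ forces $\gcd(mab,h)=1$, and in particular $\gcd(m,h)=1$. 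Starting from $n=F(r_0,r_1)+2$ and cancelling the common factor $g$ via Lemma~\ref{lem:Fbasics}$(c)$, I obtain the clean identity $n=F(mab,\,mab-h)+2$.

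Third comes the case analysis. When $b\geq 2$, coprimality forces $a>b\geq 2$ and hence $ab>a+b$; I would then rearrange Lemma~\ref{lem:crit group inequality} into the form $n\geq 3+\frac{(mab-1)g}{a+b}$ and use $\frac{mab}{a+b}>m$ to conclude $n>m+2=N(m,m+1)$, so $n>\min_k N(m,k)$. When $b=1$, I would reduce the identity using parts $(a)$ and $(b)$ of Lemma~\ref{lem:Fbasics}: substituting $a=gh-1$ and setting $r=m\bmod h$ yields, for $h\geq 2$, the exact expression $n=F(h,h-r)+mg-\floor*{\frac{m}{h}}=N(m,h)+(g-1)m\geq N(m,h)$, while for $h=1$ Lemma~\ref{lem:Fbasics}$(d)$ gives $n=ma+2\geq m+2=N(m,m+1)$. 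In every case $n\geq\min_k N(m,k)$, so combined with the achievability from Theorem~\ref{T:construction} the minimal $n$ equals $\min_k N(m,k)=N(m,k^*)$ for the minimizing $k^*$, which is at least $2$ and coprime to $m$.

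The main obstacle I anticipate is the bookkeeping in the $b=1$ reduction: making the floor functions and the residue $h-r$ line up exactly with the definition of $N(m,k)$, and checking that $\gcd(mab,h)=1$ genuinely certifies $h$ as a legal value of $k$ (coprime to $m$, and at least $2$ in the subcase where it plays that role). The $b\geq 2$ case should be comparatively soft, since Lemma~\ref{lem:crit group inequality} immediately pushes $n$ past the universal benchmark $N(m,m+1)=m+2$. The subtle point that the identity $n=N(m,h)+(g-1)m$ is designed to handle is ruling out that a structure with $r_\ell=g\geq 2$ could slip strictly below the minimum: that term $(g-1)m$ shows any $g>1$ only increases $n$ relative to the corresponding construction, so nothing is lost by restricting to $g=1$.
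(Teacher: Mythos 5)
Your proposal is correct and follows essentially the same route as the paper: dispose of the case $r_x,r_y\geq 2$ via Lemma~\ref{lem:crit group inequality} against the benchmark $N(m,m+1)=m+2$, then recognize the remaining structures as those of Theorem~\ref{T:construction}. The only real difference is that where the paper handles $r_\ell\geq 2$ by a second application of the same inequality, you prove the exact identity $n=N(m,h)+(g-1)m$ with $g=r_\ell$ and $h=(r_x+r_y)/g$, which subsumes that case and makes fully explicit the paper's claim that every smooth structure with $r_x=r_\ell=1$ arises from the construction.
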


\begin{proof}
First note that it follows from Lemma~\ref{lem:crit group inequality} that any smooth arithmetical structure on $D_n$ with $r_x \geq 2$ and $r_y \geq 2$ will have critical group of order at most $n-2$.  This implies that any smooth arithmetical structure of this type with critical group of order $m$ will have at least $m+2$ vertices. This is no better than using Theorem~\ref{T:construction} with $k=m+1$. Because of the symmetry of the bident, we can therefore assume $r_x=1$.

Second, note that if $r_{\ell}\geq2$ it then follows from Lemma~\ref{lem:crit group inequality} that any smooth arithmetical structure on $D_n$ will have critical group of order at most $n-2$. This implies that any smooth arithmetical structure of this type with critical group of order $m$ will have at least $m+2$ vertices, which is again no better than using Theorem~\ref{T:construction} with $k=m+1$. Therefore it suffices to restrict attention to smooth arithmetical structures with $r_{\ell}=1$.

Thus, in looking for the smallest bident on which there is an arithmetical structure whose critical group has order $m$, it suffices to consider smooth arithmetical structures with $r_x=1$ and $r_{\ell}=1$. All such structures are of the form constructed in Theorem~\ref{T:construction} for some $m$ and $k$, and since $\gcd(m,k)=r_{\ell}=1$ we must have that $k$ is coprime to $m$.
\end{proof}

We claim moreover that choosing the \emph{smallest} $k\geq2$ coprime to $m$ and using the construction of Theorem~\ref{T:construction} usually gives the smallest $n$ for which there is an arithmetical structure on $D_n$ with critical group of order $m$. Before stating this result, we prove a lemma we will use repeatedly.

\begin{lemma}\label{L:Lbound}
For any relatively prime pair of integers $m,k\geq2$, we have the following bounds on $N(m,k)$:
\begin{enumerate}[$(a)$]
\item $N(m,k) > \frac{(k-1)m}{k} + 2$.
\item If $m \equiv 1 \pmod{k}$, then $N(m,k) = \frac{(k-1)m+1}{k}+k$.  Otherwise, $N(m,k) \le \frac{(k-1)m+k-1}{k}+\frac{k+1}{2}$.
\end{enumerate}
\end{lemma}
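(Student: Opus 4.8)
The plan is to rewrite everything in terms of the quotient $q=\floor*{m/k}$ and remainder $r$, where $m=qk+r$, and then reduce both parts to the elementary facts about $F$ collected in Lemma~\ref{lem:Fbasics}. The key preliminary observation is that coprimality of $m$ and $k$ (with $k\ge2$) forces $r\neq0$, so $1\le r\le k-1$; consequently $1\le k-r\le k-1$, which is what will let us control $F(k,k-r)$. I would also record the identity $m-\floor*{m/k}=\frac{(k-1)m+r}{k}$, obtained by substituting $\floor*{m/k}=(m-r)/k$; this is the bridge between the defining expression $N(m,k)=m-q+F(k,k-r)$ and the rational expressions in the statement.

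For part $(a)$, I would simply subtract $\frac{(k-1)m}{k}$ from $N(m,k)$. Writing $\frac{(k-1)m}{k}=m-q-\frac{r}{k}$, the difference collapses to exactly $F(k,k-r)+\frac{r}{k}$. Since $k-r\ge1>0$, the defining sequence for $F(k,k-r)$ has at least two positive terms, so $F(k,k-r)\ge2$; and $r\ge1$ gives $\frac{r}{k}>0$. Adding these two facts yields the strict inequality $F(k,k-r)+\frac{r}{k}>2$, which is precisely the claim.

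For part $(b)$, I would split on whether $r=1$. When $m\equiv1\pmod{k}$, i.e.\ $r=1$, Lemma~\ref{lem:Fbasics}$(d)$ gives $F(k,k-1)=k$ exactly; combining this with the identity $m-q=\frac{(k-1)m+1}{k}$ (the $r=1$ specialization of the bridge identity above) turns $N(m,k)=m-q+k$ into the stated equality. When $r\neq1$, coprimality forces $2\le r\le k-1$ (in particular $k\ge3$), hence $1\le k-r\le k-2$; this is exactly the hypothesis under which Lemma~\ref{lem:Fbasics}$(f)$ applies, giving $F(k,k-r)\le\frac{k+1}{2}$. Since $r\le k-1$ also gives $m-q=\frac{(k-1)m+r}{k}\le\frac{(k-1)m+k-1}{k}$, adding the two bounds produces the desired upper estimate for $N(m,k)$.

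All of the arithmetic is routine, so the one point I would flag as the crux is verifying that the hypotheses of Lemma~\ref{lem:Fbasics}$(f)$ are genuinely met in the ``otherwise'' case. The content of that case is exactly $r\neq1$, which together with the coprimality-forced $r\neq0$ pins $r$ into the range $2\le r\le k-1$ that makes $k-r$ land in $[1,k-2]$; without ruling out $r=0$ one could not invoke part $(f)$, and this is the only place the coprimality hypothesis does essential work.
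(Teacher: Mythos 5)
Your proof is correct and follows essentially the same route as the paper's: write $m=qk+r$, use coprimality to force $1\le r\le k-1$, and reduce everything to the elementary properties of $F$, with $F(k,k-r)\ge 2$ giving part $(a)$ and the split on $r=1$ versus $2\le r\le k-1$ giving part $(b)$. As a minor bonus, you correctly invoke Lemma~\ref{lem:Fbasics}$(f)$ for the bound $F(k,k-r)\le\frac{k+1}{2}$, whereas the paper's proof cites part $(d)$ at that step, which is evidently a typo.
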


\begin{proof}
For $(a)$, since $m$ and $k$ are relatively prime, $\floor*{\frac{m}{k}}<\frac{m}{k}$. Also, since $r<k$, we have that $k-r>0$, and hence $F(r,k-r)\geq2$. It follows that $N(m,k) > \frac{(k-1)m}{k} + 2$.

For $(b)$, if $m\equiv1\pmod{k}$, then $\floor*{\frac{m}{k}}=\frac{m-1}{k}$. Also, if $m\equiv1\pmod{k}$, then $r=1$, and hence $F(k,k-r)=F(k,k-1)=k$. Therefore $N(m,k) = \frac{(k-1)m+1}{k}+k$. Otherwise, we have that $\floor*{\frac{m}{k}}\geq\frac{m-(k-1)}{k}$ and, by Lemma~\ref{lem:Fbasics}$(d)$, that $F(k,k-r) \leq\frac{k+1}{2}$. Hence $N(m,k)\leq \frac{(k-1)m+k-1}{k}+\frac{k+1}{2}$.
\end{proof}

We now state the main theorem of this subsection, which identifies the smallest value of $n$ for which there is a critical group of order $m$.
\begin{theorem}\label{thm:critgporder}
Let $m\geq2$ be an integer.
With the exceptions of $m\in\{6,210\}$, the smallest $n$ for which there is an arithmetical structure on $D_n$ with critical group of order $m$ is given by $N(m,k)$, where $k$ is the smallest integer greater than $1$ that is coprime to $n$ and the structure is obtained by the construction in the proof of Theorem~\ref{T:construction}. For $m=6$, the smallest such $n$ is $8$, and for $m=210$, the smallest such $n$ is $200$.
\end{theorem}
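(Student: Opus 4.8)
The plan is to combine Proposition~\ref{prop:optimal}, which reduces the problem to minimizing $N(m,k)$ over integers $k\geq 2$ coprime to $m$, with the quantitative estimates of Lemma~\ref{L:Lbound}. Writing $k_0$ for the smallest integer greater than $1$ coprime to $m$, the goal is to show $N(m,k_0)\leq N(m,k)$ for every competing $k$, with the two listed exceptions. I would first dispatch the odd case, where $k_0=2$. Here Lemma~\ref{L:Lbound}$(b)$ gives $N(m,2)=\frac{m+1}{2}+2$, while Lemma~\ref{L:Lbound}$(a)$ gives $N(m,k)>m-\frac{m}{k}+2\geq\frac{2m}{3}+2$ for every $k\geq 3$; since $\frac{2m}{3}+2>\frac{m+1}{2}+2$ exactly when $m>3$, the value $k_0=2$ is optimal for all odd $m>3$, and $m=3$ is checked directly. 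In particular, no odd $m$ is exceptional.

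The heart of the matter is even $m$, where $k_0\geq 3$. The crucial observation is that the lower bound $N(m,k)>m-\frac{m}{k}+2$ is increasing in $k$, so once $k$ is large enough that $m-\frac{m}{k}+2\geq N(m,k_0)$, such a $k$ cannot beat $k_0$. Pairing this with an upper bound for $N(m,k_0)$ confines all possible competitors to the finite window $k_0<k<\frac{m}{m+2-N(m,k_0)}$, when that denominator is positive. I would then split according to the two branches of Lemma~\ref{L:Lbound}$(b)$. In the generic branch $m\not\equiv 1\pmod{k_0}$, the stronger bound $N(m,k_0)\leq\frac{(k_0-1)m+k_0-1}{k_0}+\frac{k_0+1}{2}$ applies; because a large value of $k_0$ forces $m$ to be divisible by every prime below $k_0$, and hence to exceed the corresponding primorial, $k_0$ is small relative to $m$, the competitor window is short, and a direct comparison shows $k_0$ is optimal.

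The main obstacle, and the origin of both exceptions, is the dangerous branch $m\equiv 1\pmod{k_0}$, where one has only the larger exact value $N(m,k_0)=m-\frac{m-1}{k_0}+k_0$. Two phenomena can now let a competitor $k>k_0$ undercut $k_0$: either $k\mid m+1$, which forces $F(k,k-r)=F(k,1)=2$, its minimum possible value, or $\floor*{\frac{m}{k}}$ is large enough to absorb a moderate value of $F(k,k-r)$, which is precisely what happens for $m=210$ with $k=13$. I would make the window bound explicit, noting in particular that $N(m,k_0)\geq m+2=N(m,m+1)$ whenever $m\leq(k_0-1)^2$, so that in this sub-case $k_0$ is immediately non-optimal (this detects $m=6$), while for $m>(k_0-1)^2$ the window has length $O(k_0)$ and can be searched. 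Evaluating $N(m,k)$ across this window using Lemma~\ref{lemma:Fintermsofq} to compute each $F(k,k-r)$, one finds that $m\in\{6,210\}$ are the only values for which some $k>k_0$ strictly improves on $k_0$, and a final direct computation records the resulting minimal bidents; for every other $m$ the minimum is $N(m,k_0)$, proving the theorem. The delicate part is verifying that the finite search in the dangerous branch produces no exceptions beyond these two, which requires controlling the window length and the admissible sizes of $F(k,k-r)$ simultaneously.
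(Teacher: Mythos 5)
Your overall strategy is the same as the paper's: reduce via Proposition~\ref{prop:optimal} to minimizing $N(m,k)$ over $k\geq2$ coprime to $m$, play the upper bounds on $N(m,k_0)$ from Lemma~\ref{L:Lbound}$(b)$ against the lower bound $N(m,k)>\frac{(k-1)m}{k}+2$ from Lemma~\ref{L:Lbound}$(a)$, use the fact that $k_0$ is prime and $m$ is divisible by every smaller prime (hence at least the primorial) to kill all competitors when $k_0$ is large, and finish with finitely many direct checks. Your ``window'' $k_0<k<\frac{m}{m+2-N(m,k_0)}$ and the split on $m\bmod k_0$ are a repackaging of the paper's explicit case analysis on $k_0\in\{2,3,5,7,11\}$ versus $k_0\geq13$.

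Two points need repair. First, the claim that $N(m,k_0)\geq m+2=N(m,m+1)$ for $m\leq(k_0-1)^2$ makes $k_0$ ``immediately non-optimal'' fails at the boundary: a non-strict inequality only gives a tie, and the boundary case actually occurs. For $m=4$ one has $k_0=3$, $m=(k_0-1)^2$, and $N(4,3)=6=N(4,5)$, so $k_0=3$ still attains the minimum and $m=4$ is not an exception; as written, your criterion would flag it as one. You need $m<(k_0-1)^2$ strictly, after which the divisibility constraints leave only $m=6$. Second, ``$m$ exceeds the corresponding primorial'' is not by itself enough to close the large-$k_0$ case uniformly: you need an effective lower bound of the form $\prod_{p<k_0}p>\frac{k_0^3-3k_0+2}{2}$ valid for all $k_0\geq13$, which the paper extracts from the Rosser--Schoenfeld bound on Chebyshev's $\theta$ together with hand checks for small $k_0$; without such an estimate the assertion that the window is empty is not proved for infinitely many $k_0$ at once. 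Relatedly, the generic branch $m\not\equiv1\pmod{k_0}$ is not automatically exception-free: for each small $k_0$ the window is nonempty for finitely many small $m$ (e.g.\ $m=12,18,24$ when $k_0=5$), and these must be checked directly just as in the dangerous branch. None of these are structural flaws, but they are the places where your sketch currently asserts rather than proves.
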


\begin{proof}
By Proposition~\ref{prop:optimal}, it suffices to find the $k\geq2$ coprime to $m$ that minimizes $N(m,k)$. The proof breaks into cases according to the smallest prime that does not divide $m$.

We begin by considering the case where $m$ is odd.  In this case we compute that $N(m,2) = m-\floor*{\frac{m}{2}} + F(2,1) = \frac{m+5}{2}$.  The value of $\frac{(k-1)m}{k}$ is increasing in $k$, so if $k \ge 3$ it follows from Lemma~\ref{L:Lbound} that $N(m,k) > \frac{(k-1)m}{k} + 2 \ge \frac{2m}{3}+2$. Since $\frac{m+5}{2}\leq\frac{2m}{3}+2$ for all $m\geq3$, we therefore have $N(m,2) < N(m,k)$. Thus the construction of Theorem~\ref{T:construction} is optimal with $k=2$, and the smallest bident that has an arithmetical structure with critical group of order $m$ has $n=\frac{m+5}{2}$. Note that this arithmetical structure with $n=\frac{m+5}{2}$ is the same as the one with $m=2n-5$ in Theorem~\ref{thm:criticalgp}$(a)$.

We next consider values of $m$ that are even but not divisible by $3$.  If $m \equiv 1\pmod{3}$, Lemma~\ref{L:Lbound} gives that $N(m,3)=\frac{2m+10}{3}$. If $m \equiv 2\pmod{3}$, we compute that
\[
N(m,3)=m-\floor*{\frac{m}{3}}+F(3,1)=m-\frac{m-2}{3}+2=\frac{2m+8}{3}.
\]
Because $m$ is even and $\gcd(m,k)=1$, we know that if $k \ne 3$ then $k \ge 5$. It follows from Lemma~\ref{L:Lbound} that for such $k$ we have $N(m,k) > \frac{(k-1)m}{k}+2 \geq \frac{4m}{5}+2$.
As long as $m \ge 10$, we then have that $N(m,3)\leq\frac{2m+10}{3}\leq\frac{4m}{5}+2<N(m,k)$. If $m \equiv 2\pmod{3}$, we can improve this bound by saying that $N(m,3)\leq\frac{2m+8}{3}\leq\frac{4m}{5}+2<N(m,k)$ as long as $m\ge5$.  This shows that the construction of Theorem~\ref{T:construction} is optimal with $k=3$ except possibly in the case $m=4$. In that case, we check directly that $N(4,3)=5$ and $N(4,k) > 5$ for all $k \ge 5$, so the result also holds then.  Note that these arithmetical structures are the same as those constructed in Theorem~\ref{thm:criticalgp}$(b)$.

Suppose $m$ is a multiple of $6$ but not a multiple of $5$.  Lemma~\ref{L:Lbound} gives that $N(m,5) \le \frac{4m+19}{5}$ if $m\not\equiv1\pmod{5}$ and $N(m,5) = \frac{4m+26}{5}$ if $m \equiv 1\pmod{5}$.  Let $k\neq5$ be relatively prime to $m$; we must then have $k \ge 7$. Lemma~\ref{L:Lbound} thus gives that $N(m,k) > \frac{(k-1)m}{k} + 2\geq\frac{6m}{7}+2$.  It follows that $N(m,5) < N(m,k)$ for $m\geq 32$ when $m\not\equiv1\pmod{5}$ and $m\geq 56$ when $m\equiv1\pmod{5}$. This leaves the only possible exceptions in this case as $m=6,12,18,24,36$. For $m=6$, we check directly that $F(6,5)=10$, $F(6,7)=8$, and $F(6,k)>8$ for all $k\geq11$. Therefore this is an exception. For $m=12$, we check directly that $F(12,5)=13$ and $F(12,k)>13$ for all $k\geq7$, so this is not an exception. For $m=18,24,36$, we check directly that $N(m,5)<N(m,7)$ and that $N(m,5)<\frac{10m}{11}+2\leq\frac{(k-1)m}{k}+2<N(m,k)$ for all $k\geq11$, so these are not exceptions.

Now suppose $m$ is a multiple of $30$ but not a multiple of $7$. Let $k\neq7$ be coprime to $m$; we must then have $k\geq11$. Using Lemma~\ref{L:Lbound}, when $m\not\equiv1\pmod{7}$ and $m\geq55$, we have that $N(m,7)\leq\frac{6m+34}{7}\leq\frac{10m}{11}+2<N(m,k)$. Also by Lemma~\ref{L:Lbound}, when $m\equiv1\pmod{7}$ and $m\geq99$, we have that $N(m,7)=\frac{6m+50}{7}\leq\frac{10m}{11}+2<N(m,k)$. Therefore the only possible exception in this case is $m=30$. However one can check directly that $N(30,7)=30$ and $N(30,k)>30$ for all $k\geq11$, so this is not an exception.

Next suppose $m$ is a multiple of $210$ but not a multiple of $11$. Let $k\neq11$ be coprime to $m$; we must then have $k\geq13$. Using Lemma~\ref{L:Lbound}, when $m\not\equiv1\pmod{11}$ and $m\geq325$, we have that $N(m,11)\leq\frac{10m+72}{11}\leq\frac{12m}{13}+2<N(m,k)$. Also by Lemma~\ref{L:Lbound}, when $m\equiv1\pmod{7}$ and $m\geq650$, we have that $N(m,11)=\frac{10m+122}{11}\leq\frac{12m}{13}+2<N(m,k)$. Therefore the only possible exception in this case is $m=210$. Indeed, this is an exception, as one can check directly that $N(210,11)=202$, $N(210,13)=200$, and $N(210,k)>200$ for all $k\geq17$.

Finally, we consider all remaining cases at once. Let $k\geq 13$, and suppose $m$ is a multiple of all primes less than $k$ but not a multiple of $k$. Recall that the product of all primes less than $x$ is given by $e^{\theta(x)}$ where $\theta(x)=\sum_{p<x} \log(p)$ is Chebyshev's function.  It is well known that $\theta(x) \sim x$, and in particular it follows from~\cite{Cheb} that $\theta(x) > x-\frac{1}{\log(x)}$ for all $x \ge 41$.  Checking the remaining cases by hand, one sees that the product of all primes less than $k$ is greater than $\frac{k^3-3k+2}{2}$ for all $k\geq13$. Therefore $m\geq\frac{k^3-3k+2}{2}$. It follows that
\[
N(m,k)\leq \frac{(k-1)m+1}{k}+k\leq\frac{(k+1)m}{k+2}+2\leq\frac{(k+c-1)m}{k+c}+2<N(m,k+c)
\]
for all integers $c\geq2$. Therefore, for such $m$, the construction in Theorem~\ref{T:construction} is optimal for the smallest prime $k$ that does not divide $m$.
\end{proof}

We conclude by noting that, since the preceding theorem completely determines the positions with nonzero entries in Table~\ref{table:dist}, we can obtain Theorem~\ref{thm:criticalgp} as a corollary of Theorem~\ref{thm:critgporder}.

\section*{Acknowledgments}

This project began through the Research Experiences for Undergraduate Faculty (REUF) program which met at the Institute for Computational and Experimental Research in Mathematics (ICERM) on Brown University's campus during the summer of 2017. The authors would like to thank both the American Institute of Mathematics (AIM) and ICERM who supported our REUF through National Science Foundation (NSF) grants DMS-1620073 and DMS-1620080. The authors were further supported by an REUF continuation award during the summer of 2018. Without the support of these grants and program, this project would not have been possible. A.~Diaz-Lopez thanks the AMS and Simons Foundation for support under the AMS-Simons Travel Grant. J.~Louwsma was partially supported by a Niagara University Summer Research Award.

\providecommand{\bysame}{\leavevmode\hbox to3em{\hrulefill}\thinspace}
\providecommand{\MR}{\relax\ifhmode\unskip\space\fi MR }
\providecommand{\MRhref}[2]{%
  \href{http://www.ams.org/mathscinet-getitem?mr=#1}{#2}
}
\providecommand{\href}[2]{#2}

\end{document}